\documentclass[a4paper,11pt,reqno]{amsart} 
\usepackage{amssymb, amsmath, amsthm, mathtools, mathrsfs, bm}

\usepackage{tikz-cd}
\usepackage{extarrows}
\usepackage{arydshln} 
\usepackage{enumerate}
\usepackage{booktabs,float}
\usepackage{bbm}
\usepackage{xfrac}
\usepackage[square, comma, sort, numbers]{natbib}
\usepackage[colorlinks = true,
            linkcolor = blue,
            urlcolor  = cyan,
            citecolor = black,
            anchorcolor = black]{hyperref}



\theoremstyle{plain}
\newtheorem{theorem}{Theorem}[section]
\newtheorem{proposition}[theorem]{Proposition}
\newtheorem{corollary}[theorem]{Corollary}
\newtheorem{lemma}[theorem]{Lemma}

\theoremstyle{definition}

\newtheorem{example}[theorem]{Example}

\theoremstyle{remark}
\newtheorem{remark}[theorem]{Remark}


\newcommand{\xra}{\xrightarrow}

\newcommand{\PP}{\ensuremath{\mathcal{P}^1}}
\newcommand{\PD}{Poincar\'{e} duality\,}

\newcommand{\loca}{\ensuremath{(\frac{1}{2})}}
\newcommand{\Z}{\ensuremath{\mathbb{Z}}}

\newcommand{\z}[1]{\ensuremath{\mathbb{Z}/2^{#1}}}
\newcommand{\zp}[1]{\ensuremath{\mathbb{Z}/p^{#1}}}

\DeclareMathOperator{\coker}{coker}
\DeclareMathOperator{\im}{im}
\DeclareMathOperator{\Sq}{Sq}

\newcommand{\matwo}[2]{\ensuremath{\footnotesize{\begin{bmatrix}
  #1\\
  #2
\end{bmatrix}}}}

\newcommand{\smatwo}[2]{\ensuremath{\left[\begin{smallmatrix}
 #1\\
 #2
\end{smallmatrix}\right]}}

\title[Suspension of $(2n+2)$-dimensional Poincar\'{e} complexes]{Suspension Homotopy of $(n-1)$-connected $(2n+2)$-dimensional Poincar\'{e} duality complexes}

\author[P. Li]{Pengcheng Li}
\address{Department of Mathematics, School of Sciences, Great Bay University, Dongguan, Guangdong \rm{523000}, China}
\email{lipcaty@outlook.com}
\urladdr{https://lipacty.github.io}

\author[Z. Zhu]{Zhongjian Zhu}
\address{College of Mathematics and Physics, Wenzhou University, Wenzhou, Zhejiang \rm{325035}, China}
\email{zhuzhongjian@amss.ac.cn}

\subjclass[2020]{Primary 55P15, 55P40,57N65}
\keywords{Homotopy Type, Suspension, Poincar\'e duality complexes, Manifolds}

\begin{document}

\begin{abstract}
We study the homotopy decompositions of the suspension $\Sigma M$ of an $(n-1)$-connected $(2n+2)$ dimensional Poincar\'{e} duality complex $M$, $n\geq 2$.  
In particular, we completely determine the homotopy types of $\Sigma M$ of a simply-connected orientable closed (smooth) $6$-manifold $M$, whose integral homology groups can have $2$-torsion. If $3\leq n\leq 5$, we obtain homotopy decompositions of $\Sigma M$ after localization away from $2$.
\end{abstract}
\maketitle

\tableofcontents

\section{Introduction}\label{sect:intro}


In \cite{ST19} So and Theriault found that the homotopy decomposition of the (double) suspension space of manifolds have directly or implicitly applications to important concepts in geometry and physics, such as topological $K$-theory, gauge groups and current groups.
Since them, the topic of suspension homotopy of manifolds becomes popular, such as \cite{Huang21,Huang22, Huang-arxiv, CS22,HL,lipc23}. As can be seen in these papers, the $2$-torsion of the homology groups of the given manifolds usually causes great obstruction to obtain a complete characterization of the homotopy decomposition of the (double) suspension. 

This paper contributes to the suspension homotopy type of \PD complexes, which are connected CW-complexes whose integral cohomology satisfying the \PD theorem.
Let $M$ be an $(n-1)$-connected $(2n+2)$ dimensional \PD complex, $n\geq 2$. 
By \PD,  the integral homology groups $H_\ast(M)$ is given by 
\begin{equation}\label{HM}
  \begin{tabular}{cccccccc}
    \toprule
$i$& $n$&$n+1$&$n+2$&$0,2n+2$&$\text{otherwise}$
  \\\midrule
  $H_i(M)$& $\Z^l\oplus T$&  $\Z^d\oplus T$ & $\Z^l$& $\Z$&$0$
  \\ \bottomrule
  \end{tabular},
\end{equation}
where $T= T_2\oplus T_{\neq 2}$ and $T_2= \oplus_{i=1}^{t_2}\z{r_i}$ is the $2$-primary component.
When $M$ is a simply-connected orientable closed (smooth) $6$-manifold, Huang \cite{Huang-arxiv} firstly determined the homotopy type of the double suspension $\Sigma^2 M$ under the assumption that $T$ contains no $2$- or $3$-torsions, while Cutler and So \cite{CS22} obtained the homotopy decompositions of $\Sigma M$ after localization away from $2$. 
Our first main theorem (Theorem \ref{thm:6mfds} below) gives a complete characterization of the homotopy types of $\Sigma M$, for which the $2$-torsion $T_2$ can be non-zero. 
The new and key point is that we use  the second James-Hopf invariant to prove that the attaching map of the top cell of the given $6$-manifold is a suspension map and contains no Whitehead products, see Proposition \ref{prop:n=2:key}.

To state our main results, we need the following \emph{stable} cohomology operations. 
For each prime $p$ and integer $r\geq 1$, there are the higher order Bocksteins $\beta_r$ which detects the degree $2^r$ map on spheres $S^n$. 
For each $n\geq 2$, there are secondary cohomology operations
\begin{equation}\label{eq:Theta}
  \Theta_n\colon S_n(X)\to T_n(X)
 \end{equation}  
 detecting the map $\eta^2\in\pi_{n+2}(S^n)$, where 
 \begin{align*}
  S_n(X)&=\ker(\theta_n)_\sharp=\ker(\Sq^2)\cap \ker(\Sq^2\Sq^1)\\
  T_n(X)&=\coker(\Omega\varphi_n)_\sharp=H^{n+3}(X;\z{})/\im(\Sq^1+\Sq^2). 
\end{align*} 
That is, $\Theta_n$ is based on the null-homotopy of the composition
\[K_n\xra{\theta_n=\smatwo{\Sq^2\Sq^1}{\Sq^2}} K_{n+3}\times K_{n+2}\xra{\varphi_n=[\Sq^1,\Sq^2]}K_{n+4},\]
where $K_m=K(\z{},m)$ denotes the Eilenberg-MacLane space of type $(\z{},m)$.
The map $\eta^3$ can be detected by the tertiary operation $\mathbb{T}$, see \cite[Exercise 4.2.5]{Harperbook}. 
For $m\geq 3$, let $\alpha_1(m)=\Sigma^{m-3}\alpha_1(3)\colon S^{m+3}\to S^m$
be the generator of the $3$-primary component of $\pi_{m+3}(S^m)$. Denote by $\tilde{\alpha}_1(m)$ the possible  lift (if exists) of $\alpha_1(m)$ to $P^m(3^r)$; i.e., $q_m\circ \tilde{\alpha}_1(m)=\alpha_1(m)$.
See \cite[Lemma 2.6]{LPW} or \cite[Chapter XIII]{Todabook} with $p=3$. Note that $\alpha_1(m)$ and $\tilde{\alpha}_1(m)$ can be detected by the first reduced Steenrod power operation (cf. \cite[Chapter 1.5.5]{Harperbook}).
\begin{equation}\label{PP}
  \PP\colon H^\ast(-;\Z/3)\to H^{\ast+4}(-;\Z/3).
\end{equation}

We also need the following notations. For a finitely generated group $G$ and an integer $n\geq 2$, denote by $P^n(G)$ the $n$-dimensional Peterson space which is characterized by its unique non-trivial reduced integral cohomology group $G$ in dimension $n$. For $n,k\geq 2$, denote by $P^n(k)=P^n(\Z/k)$
the Moore space of homotopy type $(\Z/k,n-1)$ and $\Z/k$ is the group of integers mod $k$. There is a canonical homotopy cofibration
\[S^{n-1}\xra{k}S^{n-1}\xra{i_{n-1}}P^n(k)\xra{q_n}S^m,\]
where $t$ is the degree $t$ map, $i_{n-1}$ and $q_n$ are the canonical inclusion and pinch maps, respectively. 
We shall also use the following \emph{elementary Chang-complexes} (due to Chang \cite{Chang50}) with $n\geq 2,r\geq 1$:
\begin{align*}
  C^{n+2}_\eta&=S^n\cup_\eta e^{n+2},\quad  C^{n+2}_r=P^{n+1}(2^r)\cup_{i_{n}\eta}e^{n+2},
\end{align*}
which admit the folloing homotopy cofibrations (cf. \cite{ZP17})
\begin{align*}
 & S^{n+1}\xra{\eta}S^n\xra{i_n}C^{n+2}_\eta\xra{q_{n+2}}S^6;\\
 &S^{n+1}\vee S^{n}\xra{(\eta,2^r)}S^{n}\xra{i_n}C^{n+2}_r\xra{q}S^{n+2}\vee S^{n+1},\\
 &S^{n+1}\xra{i_n\eta}P^{n+1}(2^r)\xra{i_P}C^{n+2}_r\xra{q_{n+2}}S^{n+2},\\
 &S^{n}\xra{i_n2^r}C^{n+2}_\eta\xra{i_\eta}C^{n+2}_r\xra{q_{n+1}}S^{n+1},
\end{align*}
where $i_m\colon S^m\to Y$ and $q_m\colon X\to S^m$ respectively denote the appropriate inclusion and pinch maps for different complexes $X,Y$; $i_P,i_\eta$ are the canonical inclusion into $C^{n+2}_r$. 

Now it is prepared to state the first main theorem.

\begin{theorem}\label{thm:6mfds}
  Let $M$ be a simply-connected closed $6$-manifold with $H_\ast(M)$ given by (\ref{HM}), $n=2$. Let $\omega_2(M)$ be the second Stiefel-Whitney class of $M$. Let $T[c_2]=T\big/\oplus_{j=1}^{c_2}\z{r_j}.$
  There exist integers $0\leq c_1\leq l,0\leq c_2\leq t_2$ such that $0\leq c_1+c_2\leq l$, and $c_1=c_2=0$ if and only if the Steenrod square $\Sq^2$ acts trivially on $H^2(M;\z{})$. 
  \begin{enumerate}[1.]
  \item\label{thm-spin}  Suppose that $\omega_2(M)=0$, then the possible homotopy types of $\Sigma M$ can be distinguished by the the secondary operation $\Theta$: 
  \begin{enumerate}
    \item\label{spin-Theta=0} If $\Theta$ acts trivially on $H^3(M;\z{})$, then there is a homotopy equivalence 
    \begin{multline*}
      \Sigma M\simeq \big(\bigvee_{i=1}^dS^4\big)\vee P^5(T)\vee \big(\bigvee_{i=2}^{l-c_1} S^{3}\big) 
      \vee \big(\bigvee_{i=1}^{l-c_1-c_2} S^{5}\big)\vee\big(\bigvee_{i=1}^{c_1} C^{5}_\eta\big)\\\vee P^4(T[c_2])\vee \big(\bigvee_{j=1}^{c_2}  C^{5}_{r_j}\big) \vee (S^3\cup_{\epsilon\cdot\eta^3}e^7),
     \end{multline*}
    where  $\epsilon\in\{0,1\}$, and $\epsilon=0$ if the tertiary operation $\mathbb{T}$ acts trivially on $H^2(M;\z{})$ or $l=c_1=1$.
    \item\label{spin-Thetaneq0-1} If for any $u,v\in H^4(\Sigma M;\z{})$ with $\Theta(u)\neq 0$, $\Theta(v)=0$, there holds $u+v\notin \im(\beta_s)\text{ for any } s\geq 1;$
    while there exist $u',v'\in H^4(\Sigma M;\z{})$ such that 
   $\Theta(u')\neq 0$, $\Theta(v')=0$ and $\beta_r(u'+v')\neq 0,$ then there is a homotopy equivalence 
    \begin{multline*}
      \Sigma M\simeq \big(\bigvee_{i=1}^dS^4\big)\vee \big(\bigvee_{i=1}^{l-c_1} S^{3}\big) \vee \big(\bigvee_{i=1}^{l-c_1-c_2} S^{5}\big)
      \vee P^4(T[c_2])\vee \big(\bigvee_{j=1}^{t_2}  C^{5}_{r_j}\big)\\ \vee\big(\bigvee_{i=1}^{c_1} C^{5}_\eta\big)\vee P^5\big(\frac{T}{\z{r_{j_1}}}\big)\vee (P^5(2^{r_{j_1}})\cup_{i_4\eta^2}e^7),
     \end{multline*} 
where $r_{j_1}$ is the maximum of $r_{j}$ such that $\beta_{r_j}(u'+v')\neq 0$. 
    \item \label{spin-Thetaneq0-2} If there exist $u\in H^4(\Sigma M;\z{})$ with $\Theta(u)\neq 0$ and $v\in \ker(\Theta)$ such that $u+v\in\im(\beta_r)$ for some $r$,
    then there is a homotopy equivalence 
    \begin{multline*}
      \Sigma M\simeq \big(\bigvee_{i=1}^dS^4\big)\vee P^5(T)\vee \big(\bigvee_{i=1}^{l-c_1} S^{3}\big) 
      \vee \big(\bigvee_{i=1}^{l-c_1-c_2} S^{5}\big)\vee\big(\bigvee_{i=1}^{c_1} C^{5}_\eta\big)\vee \big(\bigvee_{j=1}^{c_2}  C^{5}_{r_j}\big)\\\vee P^4\big(\frac{T[c_2]}{\z{r_{j_2}}}\big) \vee (P^4(2^{r_{j_2}})\cup_{\tilde{\eta}_{r_{j_2}}\eta}e^7), \text{ or}\\[1ex]
      \Sigma M\simeq \big(\bigvee_{i=1}^dS^4\big)\vee P^5(T)\vee \big(\bigvee_{i=1}^{l-c_1} S^{3}\big) \vee \big(\bigvee_{i=1}^{l-c_1-c_2} S^{5}\big)
      \vee\big(\bigvee_{i=1}^{c_1} C^{5}_\eta\big)\\\vee P^4(T[c_2])\vee \big(\bigvee_{j_1\neq j=1}^{c_2}  C^{5}_{r_j}\big) \vee (C^5_{r_{j_2}}\cup_{i_P\tilde{\eta}_{r_{j_2}}\eta}e^7),
     \end{multline*} 
where $r_{j_2}$ is the minimum of $r_j$ such that $u+v\in \im(\beta_{r_i})$. The second possibility doesn't exist if $l=c_1=1$.
     
   \end{enumerate}
   \item\label{thm-Sq2eq0} Suppose that $\omega_2(M)\neq 0$, then $\Sigma M$ has the following possible homotopy decompositions:
   \begin{enumerate}
     \item\label{Sq2eq0-Sq2neq0-1} If for any $u\in H^5 (\Sigma M;\z{})$ with $\Sq^2(u)\neq 0$ and any $v\in \ker(Sq^2)$, there hold $u+v\notin \im (\beta_r) \text{ for any }~r\geq 1,$
   then there is a homotopy equivalence 
   \begin{multline*}
    \Sigma M\simeq \big(\bigvee_{i=1}^dS^4\big)\vee  P^5(T)\vee \big(\bigvee_{i=1}^{l-c_1} S^{3}\big)\vee \big(\bigvee_{i=2}^{l-c_1-c_2} S^{5}\big)
     \vee\big(\bigvee_{i=1}^{c_1} C^{5}_\eta\big)\\\vee P^4(T[c_2])\vee \big(\bigvee_{j=1}^{c_2}  C^{5}_{r_j}\big)  \vee C^7_\eta .
   \end{multline*}
     \item\label{Sq2eq0-Sq2neq0-2} If there exist $u\in H^5(\Sigma M;\z{})$ with $\Sq^2(u)\neq 0$, $v\in \ker(\Sq^2)$ such that $u+v\in \im (\beta_{r_i}) $ for some $r_i$,
     then there is a homotopy equivalence 
     \begin{multline*}
      \small \Sigma M\simeq \big(\bigvee_{i=1}^dS^4\big)\vee P^5\big(\frac{T}{\z{r_{i_1}}}\big)\vee \big(\bigvee_{i=1}^{l-c_1} S^{3}\big) \vee \big(\bigvee_{i=1}^{l-c_1-c_2} S^{5}\big)\vee\big(\bigvee_{i=1}^{c_1} C^{5}_\eta\big)
       \\\small \vee P^4(T[c_2])\vee \big(\bigvee_{j=1}^{c_2}  C^{5}_{r_j}\big) \vee \big(P^5(2^{r_{i_1}})\cup_{\tilde{\eta}_{r_{i_1}}}e^7)\big),
     \end{multline*}
     where $r_{i_1}$ is the minimum of $r_i$ such that $u+v\in \im(\beta_{r_i})$.
   \end{enumerate}
  \end{enumerate}
\end{theorem}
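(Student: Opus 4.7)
The plan is to write $M = M_0 \cup_\alpha e^6$ where $M_0$ is the $5$-skeleton and $\alpha\colon S^5 \to M_0$ the attaching map, and then understand $\Sigma M$ in two stages: first decompose $\Sigma M_0$ as a wedge of elementary pieces, and then identify the class $\Sigma\alpha \in \pi_6(\Sigma M_0)$ by detecting its components with the cohomology operations in the hypotheses. The input from Proposition \ref{prop:n=2:key} that $\alpha$ is a suspension carrying no Whitehead products makes $\Sigma\alpha$ a double suspension, so that its wedge-components lie in the stable part of $\pi_6$ of each summand, and the problem becomes cell-by-cell computable.

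\textbf{Decomposing $\Sigma M_0$.} From (\ref{HM}) with $n=2$ the homology of $M_0$ is concentrated in degrees $2,3,4$ with possible $2$-primary torsion. Such $A_n^2$-complexes fall under the Chang classification, so $\Sigma M_0$ splits as a wedge of copies of $S^3,S^4,S^5$, of Moore spaces $P^4(T)$ and $P^5(T)$, and of the elementary Chang complexes $C^5_\eta$ and $C^5_{r_j}$. The integers $c_1,c_2$ count exactly how many of the free and $\z{r_j}$-summands of $H_2(M)$ are linked (via the $\Sq^2$-action, equivalently the $\omega_2$-data) to summands of $H_4(M)$; hence $c_1=c_2=0$ iff $\Sq^2$ vanishes on $H^2(M;\z{})$.

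\textbf{Analyzing $\Sigma\alpha$.} Writing $\Sigma\alpha = \sum \iota_i \circ a_i$ along the wedge decomposition, each $a_i$ lives in $\pi_6$ of one of the basic summands above. The stable classes that can occur are $\eta\in\pi_6(S^5)$, $\eta^2\in\pi_6(S^4)$, $\eta^3\in\pi_6(S^3)$, $\alpha_1(3)\in\pi_6(S^3)$, and their Moore/Chang analogues $i_n\eta^k$. Each non-zero component is seen by a specific operation on $H^\ast(\Sigma M;\z{})$: $\Sq^2$ detects an $\eta$ hanging off an $S^5$, the secondary operation $\Theta$ of (\ref{eq:Theta}) detects an $\eta^2$ off an $S^4$, the tertiary operation $\mathbb{T}$ detects an $\eta^3$ off an $S^3$, and $\mathcal{P}^1$ detects an $\alpha_1$; whether the attaching class factors through a sphere or its Moore analogue $P^{n+1}(2^r)$ is distinguished by compatibility with the Bockstein $\beta_r$.

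\textbf{Case analysis.} The five cases of the theorem exhaust the possibilities for which of these detections and Bockstein-couplings are non-zero. Case (\ref{spin-Theta=0}) is the clean one where $\Theta$ vanishes and the only residual freedom is a possible $\eta^3$ on a surviving $S^3$ summand, producing the factor $S^3\cup_{\epsilon\eta^3}e^7$ with $\epsilon$ forced to be $0$ precisely when $\mathbb{T}$ vanishes or no free $S^3$-summand is available. Cases (\ref{spin-Thetaneq0-1}) and (\ref{Sq2eq0-Sq2neq0-1}) give the analogous picture for non-zero $\Theta$ (resp.\ $\Sq^2$) without Bockstein coupling, so the $\eta^2$ (resp.\ $\eta$) attaches to an $S^4$ (resp.\ to produce $C^7_\eta$). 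Cases (\ref{spin-Thetaneq0-2}) and (\ref{Sq2eq0-Sq2neq0-2}) are the coupled ones: when some $\beta_r(u+v)\neq 0$ one must pair off a specific Moore summand $P^{\ast}(2^{r_{j}})$ with the top cell, creating a two-cell factor of the form $P^5(2^{r_{i_1}})\cup_{\tilde{\eta}}e^7$ or $P^4(2^{r_{j_2}})\cup_{\tilde{\eta}\eta}e^7$.

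\textbf{Principal obstacle.} The hard step is the canonicalization in cases (\ref{spin-Thetaneq0-2}) and (\ref{Sq2eq0-Sq2neq0-2}): given that $\Sigma\alpha$ has a non-trivial component both on a Chang/sphere summand and on a Moore summand linked by a Bockstein, one must exhibit a self-equivalence of $\Sigma M_0$ that concentrates the attaching class on a single modified summand. This requires computing $\pi_6$ of $C^5_\eta$ and $C^5_{r_j}$, and using the minimality of $r_{j_1}$ or maximality of $r_{j_2}$ to pick out which Moore summand is absorbed. The same counting argument gives the vanishing of the second alternative in (\ref{spin-Thetaneq0-2}) when $l=c_1=1$, since then no free $S^3$-summand remains to perform the swap. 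The remaining bookkeeping is then routine, organized by parsing the wedge decomposition of $\Sigma M_0$ obtained in Step 1 against the list of non-zero operations assumed in each case.
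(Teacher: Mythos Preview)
Your overall plan matches the paper's: decompose $\Sigma M_5$ into elementary pieces (Proposition~\ref{prop:2n+1}), use Proposition~\ref{prop:n=2:key} to pin down the suspended attaching map, detect components with $\Sq^2$, $\Theta$, $\mathbb{T}$, $\beta_r$, and then normalize by self-equivalences (the matrix method). But two points in your write-up would actually derail the argument.

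\textbf{The Wall splitting is missing.} You treat the $d$ copies of $S^4$ in $\Sigma M_5$ on the same footing as the other summands, and you describe case~(\ref{spin-Thetaneq0-1}) as ``$\eta^2$ attaches to an $S^4$''. But the theorem produces $P^5(2^{r_{j_1}})\cup_{i_4\eta^2}e^7$ there, never $S^4\cup_{\eta^2}e^7$. The reason is that the paper first invokes Wall's splitting (equation~(\ref{eq:Wall})): $\Sigma M\simeq(\bigvee_{i=1}^d S^4)\vee\Sigma M'$, so the top cell does not touch the $S^4$ summands at all. Without this input your case analysis acquires a phantom case ($\Theta\neq 0$ with no Bockstein whatsoever) that is not in the theorem, and your description of (\ref{spin-Thetaneq0-1}) and (\ref{spin-Thetaneq0-2}) gets the Bockstein bookkeeping backwards: in (\ref{spin-Thetaneq0-1}) there \emph{is} an outgoing Bockstein (that is what forces the $P^5(2^{r_{j_1}})$), while (\ref{spin-Thetaneq0-2}) is governed by an \emph{incoming} Bockstein.

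\textbf{Proposition~\ref{prop:n=2:key} is misread.} It does not say $\alpha$ is a suspension, nor that $\Sigma\alpha$ is a double suspension. What it says is that $\hbar\colon S^6\to\Sigma W'$ desuspends once, so each wedge component lies in the image of $E\colon\pi_5(Y_i)\to\pi_6(\Sigma Y_i)$. For the $S^3$ summands this means the component comes from $\pi_5(S^2)\cong\z{}\langle\eta^3\rangle$: that is precisely why only $\eta^3$ survives and $\alpha_1(3)$ (indeed all of $\nu'$ except its $2$-primary multiple) is excluded. Your ``double suspension'' reading would instead force the $S^3$-component through $\pi_4(S^1)=0$, killing $\eta^3$ as well and contradicting case~(\ref{spin-Theta=0}). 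Relatedly, $\PP$ and $\alpha_1$ play no role in Theorem~\ref{thm:6mfds}; they enter only for $n\geq 3$.
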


\begin{remark}\label{rmk:6-mflds} Although we list all the possibilities of homotopy types of $\Sigma M$ of the given $6$-manifolds, we cannot guarantee that all possibilities could happen and one could find a $6$-manifold as an illustrating example. This is partially due to our lack of knowledge about manifolds. On Theorem 1.1, we also make the following remarks.
\begin{enumerate}
  \item $\omega_2(M)=v_2(M)$ (the second Wu class \cite[page 132]{MS74}) is a homotopy invariant for orientable topological $6$-manifolds and does not depend on the smooth structure on $M$. When $M$ is smooth, $\omega_2(M)=0$ if and only if $M$ is spin.
  \item  If $l=1$, then $(c_1,c_2)\in \{(0,0),(0,1),(1,0)\}$ can be determined by the action of the Steenrod square on $H^2(M;\z{})$, see Proposition \ref{prop:2n+1} (3). 
  \item  Convention: Wedge summands like $\bigvee_{i=k}^{m}X_i$ with $k>m$ should be viewed as a point and can be removed. 
\end{enumerate}
\end{remark}  

Denote by $X\simeq_{(\frac{1}{2})}Y$ if the spaces $X$ and $Y$ have the same homotopy type after localization away from $2$. The following immediate corollary is an improvement of \cite[Theorem 1.1]{CS22}.
\begin{corollary}\label{cor:6mfds}
  Let $M$ be a simply-connected $6$-manifold as in Theorem \ref{thm:6mfds}.  Then there is a homotopy equivalence 
  \[\Sigma M\simeq_{(\frac{1}{2})}\big(\bigvee_{i=1}^dS^4\big)\vee \big(\bigvee_{i=1}^l(S^3\vee S^5)\big)\vee P^4(T)\vee P^5(T)\vee S^7.\]
\end{corollary}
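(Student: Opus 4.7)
The plan is to invoke Theorem~\ref{thm:6mfds} and simplify each of the five listed homotopy decompositions after inverting $2$. Because localization away from $2$ preserves wedge decompositions and commutes with suspension, I can handle every wedge summand independently, so the task reduces to identifying the $(\frac{1}{2})$-local homotopy type of the basic building blocks $C^{n+2}_\eta$, $C^{n+2}_r$, $P^4(T[c_2])$, $P^5(T/\z{r_{j_i}})$, and the various $7$-cell mapping cones that appear.

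Two elementary facts drive every simplification. First, the Hopf map $\eta\in\pi_{n+1}(S^n)$ has order $2$, so $\eta\simeq_{(\frac{1}{2})}\ast$; consequently $\eta^2$, $\eta^3$, and any composite with $\eta$ (including $\tilde{\eta}_{r}\eta$) becomes null-homotopic after inverting $2$. Second, the degree-$2^r$ self-map of a sphere is a $(\frac{1}{2})$-local equivalence, so every mod-$2^r$ Moore space $P^m(2^r)$, and hence every Peterson space $P^n(T_2)$ on a $2$-primary group, is $(\frac{1}{2})$-locally contractible. Combining these facts gives $C^{n+2}_\eta\simeq_{(\frac{1}{2})}S^n\vee S^{n+2}$ and $C^{n+2}_r\simeq_{(\frac{1}{2})}S^{n+2}$, while each of the $7$-cell mapping cones built from $\epsilon\cdot\eta^3$, $i_4\eta^2$, $\tilde{\eta}_r\eta$, or $i_P\tilde{\eta}_r\eta$ splits off an $S^7$; moreover $P^n(T[c_2])\simeq_{(\frac{1}{2})}P^n(T/\z{r_{j_i}})\simeq_{(\frac{1}{2})}P^n(T)$ since all three groups differ only in their $2$-primary summands.

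What remains is routine cell bookkeeping. In every one of the five cases the resulting space has $d$ copies of $S^4$ from the free part of $H_4(M)$, a single $S^7$ from the top cell, the pair $P^4(T)\vee P^5(T)$ capturing the odd torsion, and, forced by $H_3(\Sigma M)_{\mathrm{free}}=H_5(\Sigma M)=\Z^l$, exactly $l$ copies each of $S^3$ and $S^5$. For instance, in Case~1(a) the $l-c_1-1$ original $S^3$'s together with the $c_1$ $S^3$'s from the $C^5_\eta$'s and the single $S^3$ from $S^3\cup_{\epsilon\eta^3}e^7$ add up to $l$, and the $S^5$ count is analogous; the other four cases are treated by the same accounting. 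No substantive obstacle arises: once Theorem~\ref{thm:6mfds} is granted, the corollary is essentially automatic, and the improvement over \cite[Theorem 1.1]{CS22} is precisely that the unlocalized statement now allows $T$ to contain $2$-torsion.
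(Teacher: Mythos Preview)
Your proposal is correct and matches the paper's intent: the corollary is stated there as an ``immediate corollary'' of Theorem~\ref{thm:6mfds} with no separate proof, and your case-by-case localization of the wedge summands is exactly the intended argument. (As a side remark, the index $t_2$ on the wedge $\bigvee_{j=1}^{t_2} C^5_{r_j}$ in case~\ref{spin-Thetaneq0-1} of Theorem~\ref{thm:6mfds} appears to be a typo for $c_2$, but your homology bookkeeping implicitly corrects this and the corollary holds either way.)
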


For $n=3,4,5$, we have the following results on the homotopy types of the suspension of manifolds after localization away from $2$.

\begin{theorem}\label{thm:8-mflds}
  Let $M$ be a $2$-connected $8$ dimensional \PD complex with $H_\ast(M)$ given by (\ref{HM}), $n=3$.
  \begin{enumerate}[1.]
    \item Suppose $\PP\big(H^4(M;\Z/3)\big)=0$, then there is a homotopy equivalence 
\[\Sigma M\simeq_{\loca} \big(\bigvee_{i=1}^l(S^4\vee S^6)\big)\vee \big(\bigvee_{i=1}^dS^5\big)\vee P^5(T)\vee P^6(T)\vee S^{9}. \]
\item Suppose $\PP\big(H^4(M;\Z/3)\big)\neq 0$, then the homotopy type of $\Sigma M$ can be distinguished by the higher order Bockstein $\beta_r$: 
\begin{enumerate}
  \item If for any $u\in H^5(\Sigma M;\Z/3)$ with $\PP(u)\neq 0$ and any $v\in \ker(\PP)$, there hold $\beta_r(u+v)=0,\quad u+v\notin\im(\beta_s)$ for any $r,s\geq 1, $
  then there is a homotopy equivalence 
  \begin{multline*}
   \small \Sigma M\simeq_{\loca}\big(\bigvee_{i=1}^l(S^4\vee S^6)\big)\vee (\bigvee_{i=2}^dS^5)\vee P^5(T)\vee P^6(T)\vee (S^5\cup_{\alpha_1(5)}e^9).
  \end{multline*}

  \item If there exist $u\in H^5(\Sigma M;\Z/3)$ with $\PP(u)\neq 0$ and $v\in \ker(\PP)$ such that $u+v\in \im(\beta_{r})~\text{ for some $r$},$
  then there is a homotopy equivalence 
  \begin{multline*}
  \small  \Sigma M\!\simeq_{\loca}\!\big(\bigvee_{i=1}^l(S^4\vee S^6)\big)\!\vee\! (\bigvee_{i=2}^dS^5)\!\vee\! P^5\big(\frac{T}{\Z/3^{r_{j_1}}}\big)\!\vee\! P^6(T)
  \!\vee\!\big(P^5(3^{r_{j_1}})\cup_{\tilde{\alpha}_1(5)}e^9\big),
  \end{multline*}
  where $r_{j_1}$ is the minimum of $r_j$ such that $u+v\in \im(\beta_{r_j})$.

  \item If for any $u\in H^5(\Sigma M;\Z/3)$ with $\PP(u)\neq 0$ and any $v\in \ker(\PP)$, there hold $u+v\notin \im(\beta_s)$ for any $s\geq 1,$
   while there exist $u'\in H^5(\Sigma M;\Z/3)$ with $\PP(u')\neq 0$ and $v'\in \ker(\PP)$ such that $\beta_r(u'+v')\neq 0~\text{ for some $r\geq 1$},$
then there is a homotopy equivalence 
\begin{multline*}
  \small \Sigma M\!\simeq_{\loca}\!\big(\bigvee_{i=1}^l(S^4\vee S^6)\big)\!\vee\! (\bigvee_{i=2}^dS^5)\!\vee\! P^5(T)\!\vee\! P^6\big(\frac{T}{\Z/3^{r_{j_2}}}\big)
  \!\vee\!\big(P^6(3^{r_{j_2}})\cup_{i_5\alpha_1(5)}e^9\big),
\end{multline*}
where $r_{j_2}$ is the maximum of $r_j$ such that $\beta_{r_j}(u'+v')\neq 0$.  
\end{enumerate}

  \end{enumerate}
\end{theorem}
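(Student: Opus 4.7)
The strategy parallels that of Theorem \ref{thm:6mfds} but becomes considerably simpler after inverting $2$: the $2$-primary torsion $T_2$ becomes trivial, and there is no $\eta$-component to track nor any Whitehead-product obstruction that would require the James--Hopf invariant used in Proposition \ref{prop:n=2:key}. The first step is to show that the $7$-skeleton of $\Sigma M$ splits after inverting $2$ as
\[X := \Big(\bigvee_{i=1}^lS^4\Big)\vee P^5(T)\vee \Big(\bigvee_{i=1}^dS^5\Big)\vee P^6(T)\vee \Big(\bigvee_{i=1}^lS^6\Big),\]
using the vanishing of $\pi_{m+k}(S^m)_{\loca}$ for $m\geq 4$, $k=1,2$, together with the triviality of $T_2$ after localization. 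Consequently $\Sigma M\simeq_{\loca}X\cup_\phi e^9$ for some attaching map $\phi\in\pi_8(X)_{\loca}$.

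Second, I would analyse $\pi_8(X)_{\loca}$. At primes $p\geq 5$ the groups $\pi_8(S^m)_{(p)}$ and $\pi_8(P^m(p^r))_{(p)}$ vanish for $4\leq m\leq 6$, so the $p$-primary part of $\phi$ is null. The $3$-primary generators that can appear in $\phi$ are $\alpha_1(5)\in\pi_8(S^5)_{(3)}$, its Moore-space lifts $\tilde{\alpha}_1(5)\in\pi_8(P^5(3^r))_{(3)}$ from \cite[Lemma 2.6]{LPW}, and the composites $i_5\alpha_1(5)\in\pi_8(P^6(3^r))_{(3)}$. These are distinguished cohomologically by $\PP$ on $H^5(\Sigma M;\Z/3)$ together with the higher Bocksteins: $\PP$ detects the $\alpha_1$-contributions coming from the $S^5$- and $P^5$-summands, while $\beta_r$ records in which Moore summand a given cohomology class lives and of what order.

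Case (1) corresponds to $\PP=0$, which forces the whole $3$-primary part of $\phi$ to be null and yields $\Sigma M\simeq_{\loca}X\vee S^9$. In Case (2), a change of basis on the $H^5$- and $H^6$-generators absorbs all but one non-trivial contribution into the wedge decomposition of $X$; subcases (a), (b), (c) correspond to whether the surviving attaching map lives on an $S^5$, on a $P^5(3^{r_{j_1}})$ with $r_{j_1}$ minimal such that the $\PP$-detecting class lies in $\im(\beta_{r_{j_1}})$, or on a $P^6(3^{r_{j_2}})$ with $r_{j_2}$ maximal such that $\beta_{r_{j_2}}(u'+v')\neq 0$. The main technical obstacle is this change-of-basis step: one must track how replacements of generators in $H^5$ and $H^6$ interact with the components of $\phi$, and verify that the invariants $r_{j_1}$ and $r_{j_2}$ are independent of choices. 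This is a $3$-local analogue of Proposition \ref{prop:n=2:key}, but is significantly simpler since no $\eta$-tracking and no James--Hopf invariant are required.
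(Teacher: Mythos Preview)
Your proposal is correct and follows essentially the same route as the paper's proof in Section~\ref{sect:n=3}: split the $(2n+1)$-section into a wedge after inverting $2$, write the top-cell attaching map as a combination of $\alpha_1(5)$, $\tilde{\alpha}_1(5)$, and $i_5\alpha_1(5)$ (Lemma~\ref{lem:p-odd:n=3}), detect these via $\PP$ and higher Bocksteins (Lemma~\ref{lem:p-odd:n=3:P1}), and reduce to a single nonzero component by elementary matrix operations (Lemma~\ref{lem:p-odd:n=3:P1=0} and Proposition~\ref{prop:p-odd:n=3}).

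One point needs tightening. You assert there is no Whitehead-product obstruction, but your phrasing (``the $7$-skeleton of $\Sigma M$ splits'', ``$\pi_{m+k}(S^m)_{\loca}$ for $m\geq 4$'') indicates you are splitting at the $\Sigma M$ level. In that case the attaching map $\phi\in\pi_8(X)_{\loca}$ could a priori contain Whitehead products: for instance $[\iota_4,\iota_5]$ generates a $\Z$-summand in $\pi_8(S^4\vee S^5)$, and $[\iota_5,i_4]$ contributes a $\Z/3^r$-summand to $\pi_8(S^5\vee P^5(3^r))$. The paper avoids this by splitting $M_7$ itself (Corollary~\ref{cor:2n+1-local}), using $\pi_7(S^3)_{\loca}=\pi_7(S^5)_{\loca}=0$ and Lemma~\ref{lem:HL} to obtain $M\simeq_{\loca}\big(\bigvee(S^3\vee S^5)\big)\vee C_{h'}$ with $h'\colon S^7\to(\bigvee S^4)\vee P^4(T)\vee P^5(T)$; only then does it suspend, so that $h''=\Sigma h'$ is automatically free of Whitehead products. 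This is the step that replaces Proposition~\ref{prop:n=2:key} here, not the change-of-basis; the latter corresponds to Proposition~\ref{prop:p-odd:n=3}.
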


\begin{theorem}\label{thm:p-odd:n=4}
  Let $M$ be a $3$-connected $10$ dimensional \PD complex with $H_\ast(M)$ given by (\ref{HM}), $n=4$. If $\PP$ acts trivially on $H^6(M;\Z/3)$, then there is a homotopy equivalence 
  \[\Sigma M\simeq_{\loca}\big(\bigvee_{i=1}^l(S^5\vee S^7)\big) \vee \big(\bigvee_{i=1}^dS^6\big)\vee P^6(T)\vee P^7(T)\vee S^{11};\]
  otherwise there is a homotopy equivalence 
 \begin{multline*}
 \small \Sigma M\!\simeq_{\loca}\!\big(\bigvee_{i=1}^l(S^5\vee S^7)\big) \!\vee\! \big(\bigvee_{i=1}^dS^6\big)\!\vee\! P^6(T)\!\vee\! P^7(\frac{T}{\Z/3^{r_{j_1}}})\!\vee\! \big(P^7(3^{r_{j_1}})\cup_{\tilde{\alpha}_1(7)} e^{11}\big),
 \end{multline*}
  where $r_{j_1}$ is the minimum of $r_j$ such that  
 $\PP(u+v)\neq 0, u+v\in\im(\beta_{r_j})$
  for some $u\in H^7(\Sigma M;\Z/3)$ and $v\in \ker(\PP)$.

\end{theorem}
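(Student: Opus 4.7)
The plan is to parallel the proof of Theorem \ref{thm:8-mflds}, now for $n=4$, working throughout after localization away from $2$. Inverting $2$ kills all $\eta$- and $\Sq^2$-type obstructions, and among odd primes only $p=3$ contributes nontrivial attaching data in this range. Writing $N=M^{n+2}$, an odd-primary version of Proposition \ref{prop:2n+1} shows that the attaching maps of $N$ become null-homotopic after one suspension and inversion of $2$, so
\[
\Sigma N_{\loca}\simeq \big(\bigvee_{i=1}^l S^5\big)\vee\big(\bigvee_{i=1}^d S^6\big)\vee\big(\bigvee_{i=1}^l S^7\big)\vee P^6(T)\vee P^7(T).
\]

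Next I would analyze the top-cell attaching map $\phi\colon S^{10}\to\Sigma N$ of $\Sigma M$. By the higher-dimensional analog of Proposition \ref{prop:n=2:key}, $\phi$ is a suspension and thus carries no Whitehead product components, so it decomposes into independent components in the $\pi_{10}$ of each wedge summand. A direct calculation shows the only nontrivial odd-primary contributions are
\[
\pi_{10}(S^7)_{(3)}\cong\Z/3\langle\alpha_1(7)\rangle\quad\text{and}\quad\pi_{10}(P^7(3^r))_{(3)}\cong\Z/3\langle\tilde{\alpha}_1(7)\rangle,
\]
since $\pi_{10}(S^5)$ and $\pi_{10}(S^6)=\pi_4^S$ have no odd torsion, $\pi_{10}(P^6(3^r))_{(3)}=0$, and primes $p\ge 5$ contribute nothing by connectivity. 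Thus $\phi_{\loca}$ is classified by a single mod-$3$ class, detected cohomologically by the action of $\PP$ on $H^{n+3}(\Sigma M;\Z/3)=H^6(M;\Z/3)$. When $\PP=0$ on $H^6(M;\Z/3)$ the attaching map vanishes after localization, giving $\Sigma M_{\loca}\simeq\Sigma N_{\loca}\vee S^{11}$, which is the first claim. When $\PP\neq 0$, with $r_{j_1}$ as in the statement (so that $u+v$ concentrates on a single $\Z/3^{r_{j_1}}$-summand of the integral torsion), realizing this cohomological adjustment as a self-equivalence of $\Sigma N_{\loca}$ moves $\phi$ into the $P^7(3^{r_{j_1}})$-summand as $\tilde{\alpha}_1(7)$; the remaining summands split off freely, yielding the indecomposable factor $P^7(3^{r_{j_1}})\cup_{\tilde{\alpha}_1(7)}e^{11}$.

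The main obstacle is the change-of-basis step in the second case: one must promote the cohomological adjustment $u\mapsto u+v$ to an actual self-homotopy equivalence of $\Sigma N_{\loca}$ that transports $\phi$ into the prescribed Moore summand, and verify minimality of $r_{j_1}$. This proceeds in the spirit of the proofs of Theorems \ref{thm:6mfds} and \ref{thm:8-mflds} through careful bookkeeping of the Bockstein filtration. No analog of the third subcase of Theorem \ref{thm:8-mflds}(2) appears here: since $\pi_{10}(S^6)_{\loca}=0$, there is no competing factorization of $\alpha_1$ through an $S^{n+2}$-based lower Moore space, so the classification reduces to exactly the two stated cases.
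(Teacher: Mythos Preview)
Your strategy mirrors the paper's intended approach (omitted there, but said to parallel the $n=3$ case), and your computations $\pi_{10}(S^5)_{(3)}=\pi_{10}(S^6)_{(3)}=\pi_{10}(P^6(3^r))_{(3)}=0$ and $\pi_{10}(S^7)_{(3)}\cong\pi_{10}(P^7(3^r))_{(3)}\cong\Z/3$ are correct. The gap is in your case analysis when $\PP\neq 0$.

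You assert that a change of basis always moves the attaching map into a single $P^7(3^{r_{j_1}})$ summand. But the relation $q_7\tilde\alpha_1(7)=\alpha_1(7)$ lets you cancel an $\alpha_1(7)$-component in an $S^7$ summand \emph{only when} some $\tilde\alpha_1(7)$-component in a $P^7(3^r)$ is already nonzero; there is no nontrivial map $S^7\to P^7(3^r)$ at odd primes to go the other way. If the suspended attaching map has a nonzero $\alpha_1(7)$-component in some $S^7$ while all $P^7(3^r)$-components vanish---equivalently, if no $\PP$-nontrivial class in $H^7(\Sigma M;\Z/3)$ can be adjusted into $\im(\beta_r)$ for any $r$---then $\Sigma M$ contains a wedge summand $S^7\cup_{\alpha_1(7)}e^{11}$, which is not among the two stated outcomes. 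An explicit Poincar\'e duality complex realizing this: take $M=(S^4\vee S^6)\cup_h e^{10}$ with $h=[\iota_4,\iota_6]+\iota_6\circ\alpha_1(6)$; here $l=1$, $d=0$, $T=0$, and $\PP$ acts nontrivially on $H^6(M;\Z/3)$, yet there is no $r_{j_1}$ at all. Your final paragraph misdiagnoses the reduction: for $n=3$ the $l$ copies of $S^{n+3}=S^6$ in $\Sigma M_{2n+1}$ have $\pi_8(S^6)_{(3)}=0$ and split off, whereas for $n=4$ the $l$ copies of $S^{n+3}=S^7$ have $\pi_{10}(S^7)_{(3)}\cong\Z/3$ and need not split off. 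This is a \emph{new} obstruction with no $n=3$ counterpart, not the disappearance of an old one, and your argument does not address it; the theorem as stated appears to omit this case as well.
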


\begin{theorem}\label{thm:p-odd:n=5}
  Let $M$ be a $4$-connected $12$ dimensional \PD complex  with $H_\ast(M)$ given by (\ref{HM}), $n=5$. There is a homotopy equivalence 
  \[\Sigma M\simeq_{\loca} \big(\bigvee_{i=1}^l(S^6\vee S^8)\big)\vee \big(\bigvee_{i=1}^dS^7\big)\vee P^7(T)\vee P^8(T)\vee S^{13}.\]
\end{theorem}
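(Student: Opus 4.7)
Following the template of Theorems \ref{thm:6mfds}--\ref{thm:p-odd:n=4}, the argument proceeds in two steps: first decompose the $(2n+1)$-skeleton $M_0 := M_{[7]}$ (which equals $M$ minus the top cell, since $H_i(M)=0$ for $8\le i\le 11$) after a single suspension and localization away from $2$, then show that the suspended top-cell attaching map $\Sigma f\colon S^{12}\to\Sigma M_0$ is $\loca$-null. The case $n=5$ is unconditional because, away from $2$, the lowest-degree non-trivial Steenrod power $\mathcal{P}^1_{(p)}$ has degree $\ge 4$ and therefore cannot connect any two cells of $\Sigma M$ (whose cells sit in dimensions $6,7,8,13$); in particular the $\PP$-type obstruction that appears for $n=3,4$ is absent here.

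For Step 1, a cell-by-cell analysis of $M_0$ at an odd prime $p$ shows that $\pi_6(S^5)_{(p)}=(\pi_1^S)_{(p)}=0$ and $\pi_6(P^6(p^r))_{(p)}=0$ (the latter via the long exact sequence of the pair $(P^6(p^r),S^5)$ with connecting homomorphism $p^r$), and similarly $\pi_7(S^6)_{(p)}=\pi_7(P^7(p^r))_{(p)}=0$. Hence the attaching maps of the $6$- and $7$-cells of $M_0$ split into degree maps (giving Moore summands) or null maps (giving free summands), while Whitehead-product components die after one suspension. This yields
\[
\Sigma M_0 \simeq_{\loca} \bigvee_{i=1}^l S^6 \vee P^7(T) \vee \bigvee_{i=1}^d S^7 \vee P^8(T) \vee \bigvee_{i=1}^l S^8.
\]

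For Step 2, Hilton--Milnor applied to this decomposition splits $\pi_{12}(\Sigma M_0)_{\loca}$ into weight-one summands $\pi_{12}(X)_{\loca}$ plus weight-$\ge 2$ generalized Whitehead products. Since $\Sigma f$ is a suspension, the Whitehead-product components vanish by $\Sigma[\alpha,\beta]=0$, so only the summand-wise groups matter. The sphere contributions are $\pi_{12}(S^7)_{\loca}=(\pi_5^S)_{\loca}=0$, $\pi_{12}(S^8)_{\loca}=(\pi_4^S)_{\loca}=0$, and $\pi_{12}(S^6)_{\loca}=0$ via the Serre splitting $\Omega S^6\simeq_{(p)}S^5\times\Omega S^{11}$ (giving $\pi_{12}(S^6)_{(p)}\cong\pi_{11}(S^5)_{(p)}\oplus\pi_{12}(S^{11})_{(p)}=0$ at every odd $p$). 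For the Moore summands $P^m(p^r)$ with $m\in\{7,8\}$, the long exact sequence of $(P^m(p^r),S^{m-1})$ together with Blakers--Massey (iso for $m=8$, surjection from $\pi_{11}(S^6)_{(p)}\cong\Z_{(p)}$ for $m=7$) identifies the connecting homomorphism with multiplication by $p^r$, forcing $\pi_{12}(P^m(p^r))_{(p)}=0$. Therefore $\Sigma f\simeq_{\loca}\ast$, giving
\[
\Sigma M \simeq_{\loca} \Sigma M_0\vee S^{13}\simeq_{\loca} \bigvee_{i=1}^l(S^6\vee S^8)\vee \bigvee_{i=1}^d S^7\vee P^7(T)\vee P^8(T)\vee S^{13}.
\]
The main technical obstacle is the borderline computation of $\pi_{12}(P^7(p^r))_{(p)}$, where Blakers--Massey only yields a surjection rather than an isomorphism, so one must explicitly track the $p^r$-divisibility of the Whitehead square $[\iota_6,\iota_6]\in\pi_{11}(S^6)_{(p)}$ under the connecting map to conclude that the kernel vanishes.
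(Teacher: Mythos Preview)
Your argument is correct and follows essentially the same template as the paper, which explicitly says the $n=5$ case is proved ``similar to but easier than'' the $n=3$ case in Section~\ref{sect:n=3}. Two small points are worth tightening. First, for $n\geq 3$ the skeleton $M_{2n+1}$ already decomposes \emph{before} suspension (this is Corollary~\ref{cor:2n+1-local}; the attaching maps you analyse in Step~1 lie in the stable range, so the ``Whitehead-product components die after one suspension'' clause is vacuous). This matters, because your Step~2 claim that ``$\Sigma f$ is a suspension, hence its Whitehead-product components vanish'' is only valid once you know the wedge splitting of $\Sigma M_0$ is itself the suspension of a splitting of $M_0$; otherwise composing $\Sigma f$ with an equivalence that does not desuspend could introduce Whitehead products. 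Second, for the borderline group $\pi_{12}(P^7(p^r))_{(p)}$ the paper's route is the Cohen--Moore--Neisendorfer decomposition (as in Lemma~\ref{lem:p-odd:n=3}): one gets $\pi_{12}(P^7(p^r))_{(p)}\cong \pi_{12}(S^7\{p^r\})_{(p)}$, and the fibration (\ref{fib:S}) together with $\pi_{11}(S^7)_{(p)}=\pi_{12}(S^7)_{(p)}=0$ finishes it directly, avoiding the relative-Whitehead-product analysis you flag as the obstacle.
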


The basis method to study the homotopy type of $\Sigma M$ is the homology decomposition. In the cases $2\leq n\leq 5$, we only need to solve the obstruction caused by the $2$- and $3$-torsions of $H_\ast(M)$. For $n=4,5$, the results and discussions are similar to but easier than that in the case $n=3$, so we omit their proofs here. 
If $n=6$, there hold isomorphisms
  \[\pi_{13}(P^7(p^r))\cong\pi_{14}(P^8(p^r))\cong\Z/(15,p^r).\]
 It follows that in general the reduced Steenrod $3$-th power $\PP$, together with the Bocksteins $\beta_r$, is not enough to characterize the homotopy decomposition of the suspension of a $(n-1)$-connected $(2n+2)$ dimensional \PD complex for $n\geq 6$.  

The paper is arranged as follows. In Section \ref{sect:prelim} we compute some homotopy groups of some elementary complexes and list some lemmas on cohomology operations.  In Section \ref{sect:homldecomp} we determine the homotopy type of the $(2n+1)$-th homology section of the \PD complex $M$. Section \ref{sect:n=2} and Section \ref{sect:n=3} cover the proof of Theorem \ref{thm:6mfds} and Theorem \ref{thm:8-mflds}, respectively. In Section \ref{sect:JH} we compute the homomorphisms induced by the second James-Hopf invariant on the $6$-th homotopy groups of the wedge sum of some elementary complexes, which play essential role in Section \ref{sect:n=2}.

\section*{Acknowledgements}
Pengcheng Li was supported by National Natural Science Foundation of China (Grant No. 12101290), Zhongjian Zhu was supported by National Natural Science Foundation of China
(Grant No. 11701430).

\section{Preliminaries}\label{sect:prelim}
Throughout the paper all spaces are based CW-complexes, all maps are base-point-preserving and are identified with their homotopy classes in notation. 
For an abelian group $G$ generated by $x_1,\cdots,x_n$, we use the notation
\[G\cong C_1\langle x_1\rangle\oplus\cdots\oplus C_n\langle x_n\rangle \text{ or }G\cong C_1\oplus\cdots\oplus C_n\langle x_1,\cdots,x_n\rangle \]  
to indicate $x_i$ is a generator of the cyclic direct summand $C_i$, $i=1,\cdots,n$. For a prime $p$, denote by $\pi_i(X;p)$ the $p$-primary component of the $i$-th homotopy group $\pi_i(X)$.

\subsection{Some homotopy groups of elementary complexes}
 We will frequently use the followings (cf. \cite{Todabook}): 
\begin{enumerate}[\ ]
  \item $\pi_3(S^2)\cong\Z\langle \eta\rangle$,  $\pi_{m+1}(S^m)\cong \z{}\langle \eta \rangle$ for $m\geq 3$, $\pi_{m+2}(S^m)\cong \z{}\langle \eta^2 \rangle$;
  \item $\pi_6(S^3)\cong\Z/12\langle \nu' \rangle$, $\pi_6(S^3;3)\cong\Z/3\langle \alpha_1(3)\rangle$; $\pi_7(S^4)\cong \Z\langle \nu\rangle\oplus\Z/12\langle \Sigma \nu'\rangle$,  $\pi_{m+3}(S^m)\cong\Z/{24}\langle \nu_m\rangle$ for $m\geq 5$. Note $\eta^3=6\nu'$, $2\nu_m=\Sigma^{m-3}\nu'$ for $m\geq 5$. Denote by $\delta_1=1$ and $\delta_r=0$ for $r\geq 2$.
  
\end{enumerate}

\begin{lemma}[cf. \cite{BH91}]\label{lem:Moore1}
    Let $n\geq 3,r\geq 1$ be integers and let $p$ be a prime.  The followings hold:
   \begin{enumerate}[(1)]
    \itemsep=1ex
    \item $\pi_n(P^{n+1}(p^r))\cong\zp{r}\langle i_n\rangle$.
    \item $\pi_{n+1}(P^{n+1}(p^r))\cong \left\{\begin{array}{ll}
  \z{}\langle i_n\eta\rangle,&p=2;\\
  0,&p\geq 3.
    \end{array}\right.$

    \item $\pi_{n+2}(P^{n+1}(p^r))\cong \left\{\begin{array}{ll}
      \Z/4\langle \tilde{\eta}_1\rangle,&p=2,~r=1;\\
      \Z/2\oplus\z{}\langle \tilde{\eta}_r,i_n\eta^2\rangle,&p=2,~r\geq 2;\\
      0,&p\geq 3. 
    \end{array}\right.$
    \\
  The generator $\tilde{\eta}_r$ satisfies formulas
  \begin{equation}\label{eq:eta}
    q_{n+1}\tilde{\eta}_r=\eta,\quad 2\tilde{\eta}_1=i_n\eta^2.
  \end{equation}
  \item $[P^{n+1}(2^r),P^{n+1}(2^s)]\cong \left\{\begin{array}{ll}
    \Z/4\langle 1_P\rangle,&r=s=1;\\
    \z{m}\oplus\z{}\langle B(\chi^r_s),i_n\eta q_{n+1}\rangle,&\text{$r$ or $s>1$},
  \end{array}\right.$ \\
where $m=\min(r,s)$, $B(\chi^r_s)$ satisfies $\Sigma B(\chi^r_s)=B(\chi^r_s)$ and 
\begin{equation}\label{eq:chi}
  B(\chi^r_s)i_{n}=\left\{\begin{array}{ll}
        i_{n},&r\geq s;\\
        2^{s-r}i_{n},&r\leq s.
      \end{array}\right. ~~q_{n+1}B(\chi^r_s)=\left\{\begin{array}{ll}
        2^{r-s}q_{n+1},&r\geq s;\\
        q_{n+1},&r\leq s.
      \end{array}\right.
\end{equation}
\begin{equation}\label{eq:chi-eta}
  B(\chi^r_s)\tilde{\eta}_r=\tilde{\eta}_s \text{ for }s\geq r.
\end{equation}
   \end{enumerate}
  
\end{lemma}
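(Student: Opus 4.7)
The plan is to derive all four items from a single long exact sequence. Since $n\geq 3$, homotopy excision identifies $\pi_k(P^{n+1}(p^r),S^n)\cong \pi_k(S^{n+1})$ in the stable range $k\leq 2n-1$, so the defining cofibration $S^n\xrightarrow{p^r}S^n\xrightarrow{i_n}P^{n+1}(p^r)\xrightarrow{q_{n+1}}S^{n+1}$ yields the long exact sequence
\[\pi_k(S^n)\xrightarrow{p^r_*}\pi_k(S^n)\xrightarrow{i_{n*}}\pi_k(P^{n+1}(p^r))\xrightarrow{q_{n+1*}}\pi_k(S^{n+1})\xrightarrow{p^r_*}\pi_k(S^{n+1}).\]
Part (1) is immediate from the Hurewicz theorem. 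For parts (2) and (3), I would substitute $k=n+1,n+2$ and use $\pi_{n+1}(S^n)\cong\z{}\langle\eta\rangle$, $\pi_{n+2}(S^n)\cong\z{}\langle\eta^2\rangle$, $\pi_{n+2}(S^{n+1})\cong\z{}\langle\eta\rangle$. For odd $p$, every $p^r_*$ is an isomorphism on these $2$-primary groups and forces the targets to vanish; for $p=2$, every $p^r_*$ is zero and the sequence breaks into short exact sequences of copies of $\z{}$.

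Part (2) is then immediate. The subtle point of part (3) is to resolve the extension
\[0\to \z{}\langle i_n\eta^2\rangle\to \pi_{n+2}(P^{n+1}(2^r))\to \z{}\langle\eta\rangle\to 0.\]
Pick a lift $\tilde{\eta}_r$ of $\eta$ along $q_{n+1*}$; the extension is determined by the value $2\tilde{\eta}_r\in\z{}\langle i_n\eta^2\rangle$. For $r=1$, I would identify $\tilde{\eta}_1$ with a representative of the Toda bracket $\langle i_n,2,\eta\rangle\subset\pi_{n+2}(P^{n+1}(2))$; the juggling identity $2\cdot\langle i_n,2,\eta\rangle=i_n\cdot\langle 2,\eta,2\rangle=i_n\eta^2$ then forces $2\tilde{\eta}_1=i_n\eta^2$, producing the $\Z/4$ extension. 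For $r\geq 2$, the analogous bracket is killed by its indeterminacy, and one can choose $\tilde{\eta}_r$ with $2\tilde{\eta}_r=0$, so the extension splits as $\z{}\oplus\z{}$.

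For part (4), apply $[-,P^{n+1}(2^s)]$ to the same cofibration; all sets are groups in this stable range, and the resulting exact sequence
\[\pi_{n+1}(P^{n+1}(2^s))\xrightarrow{2^r}\pi_{n+1}(P^{n+1}(2^s))\to [P^{n+1}(2^r),P^{n+1}(2^s)]\to \pi_n(P^{n+1}(2^s))\xrightarrow{2^r}\pi_n(P^{n+1}(2^s))\]
together with parts (1)--(2) collapses to $0\to \z{}\langle i_n\eta q_{n+1}\rangle\to [P^{n+1}(2^r),P^{n+1}(2^s)]\to \z{m}\to 0$, $m=\min(r,s)$. A lift of the $\z{m}$ generator is a self-map $B(\chi^r_s)$ with the cell behavior prescribed by (2.3); its existence is a routine obstruction-theoretic argument, since the only obstruction to extending the prescribed cellular map over the top cell lies in $\pi_n(P^{n+1}(2^s))$ annihilated by $2^s$. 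The extension is non-split exactly when $r=s=1$: then $1_P$ itself is such a lift, and part (3) forces $2\cdot 1_P=i_n\eta q_{n+1}$; otherwise $B(\chi^r_s)$ can be chosen with $2^mB(\chi^r_s)=0$, splitting the sequence as $\z{m}\oplus\z{}$. Formulas (2.2)--(2.4) are then verified cell-by-cell from the constructions.

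The main obstacle is the extension problem — specifically, pinning down $2\tilde{\eta}_1=i_n\eta^2$ in part (3) and deducing the dual relation $2\cdot 1_P=i_n\eta q_{n+1}$ in part (4); these rely on a careful Toda bracket manipulation (or an appeal to the classical computation in [BH91]). Everything else reduces routinely to the long exact sequence and the $2$-primary part of $\pi_{\leq n+2}(S^n)$.
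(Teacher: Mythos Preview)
The paper gives no proof of this lemma: it is stated with the citation ``cf.\ \cite{BH91}'' and used as black-box input throughout. Your sketch is therefore not comparable to a paper proof, but it is a correct outline of how one establishes the result from scratch via the Blakers--Massey long exact sequence of the defining cofibration together with the Toda-bracket identity $\langle 2,\eta,2\rangle=\eta^2$.

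Two small points worth tightening. First, for $r\geq 2$ the bracket $\langle 2^r,\eta,2\rangle$ is not ``killed by its indeterminacy'' (the indeterminacy is in fact zero); rather one has $\langle 2^r,\eta,2\rangle\ni 2^{r-1}\langle 2,\eta,2\rangle=2^{r-1}\eta^2=0$, which gives the splitting. Second, formula \eqref{eq:chi-eta} is not purely ``cell-by-cell'': from \eqref{eq:chi} you get $q_{n+1}B(\chi^r_s)\tilde{\eta}_r=\eta$, hence $B(\chi^r_s)\tilde{\eta}_r=\tilde{\eta}_s+\epsilon\, i_n\eta^2$ for some $\epsilon\in\z{}$, and one must still argue $\epsilon=0$ (e.g.\ by computing $2B(\chi^1_s)\tilde{\eta}_1=B(\chi^1_s)i_n\eta^2=2^{s-1}i_n\eta^2=0$ and then fixing the choice of $\tilde{\eta}_s$, or by appealing directly to the construction in \cite{BH91}).
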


The homotopy fibration $F^4\{2^r\}\xra{j_r}P^4(2^r)\xra{q_4} S^4$
indicates the inclusion $i_3\colon S^3\to P^4(2^r)$ factors as $S^3\xra{\imath_r} F^4\{2^r\}\xra{j_r}P^4(2^r)$.
 From \cite[Section 3.1]{ZP23}, there is a canonical map $\lambda_r\colon S^6\to F^4\{2^r\}$,
which is the skeletal inclusion map up to a homotopy equivalence.

\begin{lemma}[cf. \cite{ZP23}]\label{lem:Moore-n+3}
 Let $r\geq 1$. There hold isomorphisms 
 \begin{enumerate}
  \item $\pi_{6}(P^4(2^r))\cong \z{\varrho_r}\langle j_r\lambda_r\rangle\oplus\z{1-\delta_r} \langle i_3\nu' \rangle\oplus\z{}\langle \tilde{\eta}_r\eta \rangle$, where $\varrho_r=r+1$ for $r=1,2$, and $\varrho_r=r$ for $r\geq 3$.
  \item  $\pi_{7}(P^5(2^r))\cong \z{r+1}\oplus\z{\min(r-1,2)}\oplus\z{}\langle i_4\nu_4, i_4(\Sigma \nu'), \tilde{\eta}_r\eta\rangle$; it follows that $j_r\lambda_r$ in (1) is not a suspension element.
 \end{enumerate}
\end{lemma}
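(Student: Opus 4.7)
The plan is to use the homotopy fibration $F^4\{2^r\} \xra{j_r} P^4(2^r) \xra{q_4} S^4$ introduced just above the lemma as the primary tool, and to deduce part~(2) from part~(1) via the suspension homomorphism together with the EHP sequence.

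First I would work out the $2$-local homotopy type of the fiber in low degrees. By a Cohen--Moore--Neisendorfer-style analysis (or by direct obstruction-theoretic examination of the classifying map of the principal fibration), $F^4\{2^r\}$ admits a CW-model beginning $S^3 \cup e^6 \cup \cdots$, with $\imath_r\colon S^3\hookrightarrow F^4\{2^r\}$ and $\lambda_r\colon S^6 \to F^4\{2^r\}$ as in the text. This lets me read off $\pi_k(F^4\{2^r\})$ for $k \le 6$ in terms of $\pi_k(S^3)$ and the single $6$-cell contribution $\Z\langle\lambda_r\rangle$, giving in particular $\pi_5(F^4\{2^r\})=\z{}\langle\imath_r\eta^2\rangle$.

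Next I would invoke the long exact sequence of the fibration,
\[
\pi_7(S^4) \xra{\partial} \pi_6(F^4\{2^r\}) \xra{j_{r*}} \pi_6(P^4(2^r)) \xra{q_{4*}} \pi_6(S^4) \xra{\partial} \pi_5(F^4\{2^r\}),
\]
and identify the two boundary maps. The boundary of $\eta^2 \in \pi_6(S^4)$ is non-trivial in $\pi_5(F^4\{2^r\})$ (hitting $\imath_r\eta^2$), so $q_{4*}=0$ and $j_{r*}$ is surjective. The boundary from $\pi_7(S^4) = \Z\langle\nu\rangle \oplus \Z/12\langle\Sigma\nu'\rangle$ hits $\lambda_r$ on the free summand with coefficient $2^{\varrho_r}$, where $\varrho_r = r$ in the stable range $r \ge 3$ but jumps to $r+1$ for the exceptional small values $r=1,2$: the classical order-doubling for small Moore spaces. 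A parallel analysis on the $\Z/12$-summand produces the factor $\z{1-\delta_r}\langle i_3\nu'\rangle$, which collapses for $r=1$ (where $2\imath_r=0$ forces $i_3\nu'$ to be absorbed into the $\lambda_r$-summand) and persists as an order-$2$ generator for $r \ge 2$. The remaining generator $\tilde{\eta}_r\eta$ is produced by post-composing $\tilde{\eta}_r \in \pi_5(P^4(2^r))$ from Lemma~\ref{lem:Moore1}(3) with $\eta$, with its independence from the other summands verified by a Hopf-invariant argument.

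For part~(2), I would suspend. Since $P^4(2^r)$ is $2$-connected, Freudenthal gives that $E\colon \pi_6(P^4(2^r))\to \pi_7(P^5(2^r))$ is surjective, and the EHP sequence
\[
\pi_6(P^4(2^r)) \xra{E} \pi_7(P^5(2^r)) \xra{H} \pi_7(P^4(2^r)\wedge P^4(2^r))
\]
identifies the kernel. Tracking generators, $E(j_r\lambda_r)=i_4\nu_4$ has order $2^{r+1}$, so its order doubles from $2^{\varrho_r}$ to $2^{r+1}$; this in particular proves the final assertion of the lemma that $j_r\lambda_r$ is not itself a suspension. The class $i_3\nu'$ suspends to $i_4(\Sigma\nu')$ with order $2^{\min(r-1,2)}$ dictated by the $\Z/12$-summand of $\pi_7(S^4)$, and $\tilde{\eta}_r\eta$ suspends to itself. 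The delicate point throughout will be the careful bookkeeping of these order-doublings in the exceptional range $r\le 2$: this is a genuinely unstable phenomenon without a clean conceptual cause, and at that step I would either appeal to Toda's tables or carry out an explicit James spectral sequence computation, following the approach of \cite{ZP23}.
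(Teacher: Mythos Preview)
The paper does not prove this lemma; it is stated with the attribution ``[cf.~\cite{ZP23}]'' and no proof is given. So there is no proof in the paper to compare against---your proposal is an attempt to reconstruct the argument of the cited reference.

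Your strategy for part~(1) via the long exact sequence of the fibration $F^4\{2^r\}\to P^4(2^r)\to S^4$ is sound in outline, and the delicate boundary computations you flag are indeed the substance of \cite{ZP23}.

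Your argument for part~(2), however, contains a genuine error. You claim that Freudenthal gives surjectivity of $E\colon\pi_6(P^4(2^r))\to\pi_7(P^5(2^r))$ because $P^4(2^r)$ is $2$-connected. It does not: for an $n$-connected space Freudenthal yields surjectivity of $E$ on $\pi_k$ only for $k\le 2n+1$, here $k\le 5$. In fact $E$ is \emph{not} surjective for $r\ge 3$: comparing the group orders in (1) and (2), one has $|\pi_6(P^4(2^r))|=2^{r+2}$ while $|\pi_7(P^5(2^r))|=2^{r+4}$. Relatedly, your assertion that ``$E(j_r\lambda_r)=i_4\nu_4$ has order $2^{r+1}$, so its order doubles from $2^{\varrho_r}$ to $2^{r+1}$'' is impossible as written: the order of an element cannot increase under a group homomorphism, and for $r\ge 3$ the element $j_r\lambda_r$ has order $2^r<2^{r+1}$.

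The intended logic is the reverse of what you wrote. One computes $\pi_7(P^5(2^r))$ \emph{independently} (say via the analogous fibration $F^5\{2^r\}\to P^5(2^r)\to S^5$, or directly from the cofibre sequence $S^4\to S^4\to P^5(2^r)$ and $\pi_7(S^4)$), obtaining the generators $i_4\nu_4$, $i_4(\Sigma\nu')$, $\tilde\eta_r\eta$ with the stated orders. The clause ``it follows that $j_r\lambda_r$ is not a suspension'' is then a comparison: since $\Sigma(i_3\nu')=i_4(\Sigma\nu')$ and $\Sigma(\tilde\eta_r\eta)=\tilde\eta_r\eta$, the image of $\Sigma\colon\pi_6(P^4(2^r))\to\pi_7(P^5(2^r))$ lies in the span of $i_4(\Sigma\nu')$, $\tilde\eta_r\eta$, and $\Sigma(j_r\lambda_r)$; the mismatch in group structure (the $\z{r+1}$ summand generated by $i_4\nu_4$ cannot be hit by anything of order $\le 2^{\varrho_r}$) forces $j_r\lambda_r$ to lie outside the suspension image from $\pi_5(P^3(2^r))$. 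So part~(2) is input to, not output of, the suspension comparison.
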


From \cite[Proposition 8.2 (1)]{Mukai82} and computations in \cite{ZP23} we have  
\begin{lemma}\label{lem:htpgrps-Chang-dim6}
  There hold isomorphisms
  \begin{enumerate}
    \item $\pi_6(C^5_\eta)\cong \Z/6\langle i_3\nu' \rangle$; $i_3\alpha_1(3)=2i_3\nu'$.
    \item $\pi_6(C^5_r)\cong \z{r+\delta_r}\langle i_Pj_r\lambda_r \rangle\oplus \z{1-\delta_r}\langle i_3\nu'\rangle\oplus\z{}\langle i_P\tilde{\eta}_r\eta\rangle$. 
  \end{enumerate}
\end{lemma}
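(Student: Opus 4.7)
The plan is to combine the relative homotopy long exact sequence of the pair with Blakers--Massey excision applied to the defining cofibration of each Chang complex.

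For part (1), I apply this to the cofibration $S^4\xra{\eta}S^3\xra{i_3}C^5_\eta\xra{q_5}S^5$. Since $S^3$ is $2$-connected and $(C^5_\eta,S^3)$ is $4$-connected, the quotient map identifies $\pi_n(C^5_\eta,S^3)$ with $\pi_n(S^5)$ for $n\leq 6$, and under this identification the connecting homomorphism is left composition with the attaching class $\eta$. The resulting five-term exact sequence
\begin{equation*}
\pi_7(S^5)\xra{\partial'}\pi_6(S^3)\xra{(i_3)_\ast}\pi_6(C^5_\eta)\xra{(q_5)_\ast}\pi_6(S^5)\xra{\partial}\pi_5(S^3)
\end{equation*}
satisfies $\partial(\eta)=\eta^2\neq 0$ in $\pi_5(S^3)\cong\Z/2$, so $\partial$ is injective, while $\partial'(\eta^2)=\eta^3=6\nu'\in\pi_6(S^3)\cong\Z/12$. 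Hence $\pi_6(C^5_\eta)\cong\pi_6(S^3)/\langle 6\nu'\rangle\cong\Z/6\langle i_3\nu'\rangle$, and the identity $i_3\alpha_1(3)=2i_3\nu'$ follows from $\alpha_1(3)\equiv\pm 4\nu'\pmod{12}$. This recovers \cite[Proposition 8.2(1)]{Mukai82}.

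For part (2), I run the same argument with $S^4\xra{i_3\eta}P^4(2^r)\xra{i_P}C^5_r\xra{q_5}S^5$. Blakers--Massey again gives $\pi_n(C^5_r,P^4(2^r))\cong\pi_n(S^5)$ for $n\leq 6$, with connecting map left composition with $i_3\eta$. The boundary $\partial(\eta)=i_3\eta^2$ is nonzero in $\pi_5(P^4(2^r))$ by Lemma \ref{lem:Moore1}(3) (equal to $2\tilde\eta_1$ when $r=1$ and to a basis element when $r\geq 2$), so $(i_P)_\ast$ is surjective and the task reduces to determining its kernel. The contribution of $\pi_7(S^5)=\Z/2\langle\eta^2\rangle$ via the Blakers--Massey surjection $\pi_7(C^5_r,P^4(2^r))\twoheadrightarrow\pi_7(S^5)$ is $i_3\eta^3=6i_3\nu'$, which vanishes in $\pi_6(P^4(2^r))$ by Lemma \ref{lem:Moore-n+3}(1) because $i_3\nu'$ is $2$-torsion for $r\geq 2$ and zero for $r=1$. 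This already matches the stated formula for $r=1$ and $r\geq 3$.

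The main obstacle is the case $r=2$, where Lemma \ref{lem:Moore-n+3}(1) gives $\varrho_2=3$ but the stated answer requires the order of $j_2\lambda_2$ to drop from $\Z/8$ in $\pi_6(P^4(4))$ to $\Z/4$ in $\pi_6(C^5_2)$. The missing $\Z/2$ kernel, represented by $4j_2\lambda_2$, cannot come from $\pi_7(S^5)$ and must be extracted from the Blakers--Massey defect at degree $7$, which is of Whitehead-product type. I would pin it down by following the method of \cite{ZP23}: work with the fibration $F^4\{2^r\}\to P^4(2^r)\to S^4$ together with the explicit map $\lambda_r\colon S^6\to F^4\{2^r\}$, track its image under $i_P$, and exploit the non-suspension property of $j_r\lambda_r$ from Lemma \ref{lem:Moore-n+3}(2) to pin down the required relation in $\pi_6(C^5_2)$. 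Assembling the three ranges $r=1$, $r=2$, $r\geq 3$ then yields the unified formula.
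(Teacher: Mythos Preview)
The paper does not prove this lemma; it simply cites \cite[Proposition 8.2(1)]{Mukai82} for (1) and the computations in \cite{ZP23} for (2). Your part (1) argument via the pair sequence and Blakers--Massey is correct and self-contained.

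For part (2) there is a genuine gap, and it is not confined to $r=2$. You establish that $(i_P)_\ast$ is surjective and that the lift of $\eta^2\in\pi_7(S^5)$ maps under $\partial$ to $i_3\eta^3$, which vanishes. You then write that this ``already matches the stated formula for $r=1$ and $r\geq 3$'' --- but that is only a consistency check, not a proof that $\ker(i_P)_\ast=0$. To conclude the latter you must determine the \emph{full} image of $\partial\colon\pi_7(C^5_r,P^4(2^r))\to\pi_6(P^4(2^r))$, and at degree $7$ Blakers--Massey gives only a surjection onto $\pi_7(S^5)$; its kernel (the excision defect) has to be analysed for every $r$, not only for $r=2$. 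In fact the obvious candidates in that defect --- relative Whitehead products $[j,i_3]$, whose boundary is $[i_3\eta,i_3]=i_3[\eta,\iota_3]$ --- also vanish, since $S^3$ is an $H$-space so $[\iota_3,\iota_3]=0$ and hence $[\eta,\iota_3]=0$. Thus neither the $\eta^2$-lift nor these Whitehead products account for the nonzero kernel at $r=2$; the drop from $\z{3}$ to $\z{2}$ comes from a subtler unstable source tied to the non-suspension element $j_r\lambda_r$, and a bare cofibration/Blakers--Massey argument does not see it. This is why \cite{ZP23} works instead through the fibration $F^4\{2^r\}\to P^4(2^r)\to S^4$ and the explicit class $\lambda_r$. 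Your deferral to that method for $r=2$ is therefore what the paper does for the entirety of part (2).
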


The following Lemma can be found in \cite[Theorem 3.1, (2)]{lipc22}; since it hasn't been published yet, we give it a proof here.  

\begin{lemma}\label{lem:eq:C_r}
Let $n\geq 3$. There exists a map $\bar{\zeta}\colon C^{n+2}_\eta\to S^n$ satisfying
\begin{equation}\label{eq:zeta}
 \bar{\zeta}i_n=2\cdot 1_n, \quad 1_n=id_{S^n}.
\end{equation} 
If $r\leq s$, there exists a map $\bar{\xi}_r\colon C^{n+2}_r\to P^{n+1}(2^{r+1})$    
fitting the following homotopy commutative diagram 
  \[\begin{tikzcd}
 S^{n}\ar[r,"i_n2^r"]\ar[d,equal] & C^{n+2}_\eta \ar[r,"i_\eta"]\ar[d,"\bar{\zeta}"]& C^{n+2}_r\ar[d,"\bar{\xi}_r"]\ar[r,"q_{n+1}"]&S^{n+1}\ar[d,equal]\\
 S^{n}\ar[r,"2^{r+1}"]& S^n\ar[r,"i_n"]& P^{n+1}(2^{r+1})\ar[r,"q_{n+1}"]&S^{n+1}
\end{tikzcd}.\]
Moreover, there hold formulas  
\begin{equation}\label{eq:C_r-P}
  \bar{\xi}_r\circ i_P=B(\chi^r_{r+1}),\quad B(\chi^{s+1}_r)\bar{\xi}_s (i_P\tilde{\eta}_s\eta)\simeq \tilde{\eta}_r\eta \text{ for }r>s.
\end{equation}
It follows that for $r\leq s$, there exists a map $\alpha^r_s\colon C^{n+2}_r\to C^{n+2}_s$, which is the identity map for $r=s$ and is $i_PB(\chi^{r+1}_s)\bar{\xi}$ for $r<s$ such that 
\begin{equation}\label{eq:C_rs}
  r\leq s:~~ \alpha^r_s\circ i_P=i_P B(\chi^r_s),\quad \alpha^r_s(i_P\tilde{\eta}_r\eta)=i_P\tilde{\eta}_s\eta. 
\end{equation} 

\begin{proof}
(1) The relation \ref{eq:zeta} refers to \cite[(8.3)]{Toda2}.

(2) By (\ref{eq:zeta}) the first square in the Lemma is homotopy commutative, and hence the map $\bar{\xi}_r$ in the Lemma exists. Recall we have the composition 
\[i_n=i_\eta \circ i_n \colon S^n\to C^{n+2}_\eta\to C^{n+2}_r.\]
Then $\bar{\xi}_ri_n=(\bar{\xi}_ri_\eta) i_n=(i_n\bar{\zeta})i_n=2i_n$
imply that 
\[\bar{\xi}_r\circ i_P=B(\chi^r_{r+1})+\varepsilon\cdot i_n\eta q_{n+1}\] for some $\varepsilon \in\{0,1\}.$
If $\varepsilon=0$, we are done; otherwise we replace $\bar{\xi}_r$ by $\bar{\xi}_r+i_n\eta q_{n+1}$ to make $\varepsilon=0$. Note that all the relations mentioned above still hold even if we make such a replacement.  

The formula (\ref{eq:C_rs}) then follows by the relations (\ref{eq:C_r-P}), (\ref{eq:chi}) and (\ref{eq:chi-eta}).
\end{proof}
\end{lemma}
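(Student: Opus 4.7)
The plan is to build both $\bar{\zeta}$ and $\bar{\xi}_r$ by obstruction theory along the defining cofibre sequences of the elementary Chang complexes, and then to deduce the listed identities from the structural relations already collected in Lemma \ref{lem:Moore1}.

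For $\bar{\zeta}$: the cofibration $S^{n+1}\xra{\eta}S^n\xra{i_n}C^{n+2}_\eta\xra{q_{n+2}}S^{n+2}$ exhibits $C^{n+2}_\eta$ as the mapping cone of $\eta$, so extending the degree-$2$ self-map $2\cdot 1_n\colon S^n\to S^n$ across $i_n$ is obstructed only by $2\cdot \eta\in\pi_{n+1}(S^n)\cong\z{}$, which vanishes. Any such extension satisfies (\ref{eq:zeta}) by construction; this is Toda's formula $(8.3)$, which I would cite rather than reprove. For $\bar{\xi}_r$: using the cofibration $S^n\xra{i_n 2^r}C^{n+2}_\eta\xra{i_\eta}C^{n+2}_r\xra{q_{n+1}}S^{n+1}$, I would try to extend the composite $i_n\circ \bar{\zeta}\colon C^{n+2}_\eta\to S^n\to P^{n+1}(2^{r+1})$ across $i_\eta$. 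The obstruction equals $i_n\bar{\zeta}\circ i_n 2^r= 2^{r+1}\cdot i_n$, which is zero by the Moore-space cofibration (equivalently by Lemma \ref{lem:Moore1}(1)). Hence an extension $\bar{\xi}_r$ exists; the left and middle squares of the diagram commute by construction, and the right square commutes because pinch maps onto the top cell are canonical.

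For the identities: restricting $\bar{\xi}_r\circ i_P$ to the bottom cell $S^n\hookrightarrow P^{n+1}(2^r)$ yields $\bar{\xi}_r\,i_n= i_n\bar{\zeta}\,i_n= 2\cdot i_n$ by (\ref{eq:zeta}), which by (\ref{eq:chi}) coincides with $B(\chi^r_{r+1})\circ i_n$. Therefore $\bar{\xi}_r\circ i_P- B(\chi^r_{r+1})$ kills the bottom cell and must equal $\varepsilon\cdot i_n\eta q_{n+1}$ for some $\varepsilon\in\{0,1\}$ by Lemma \ref{lem:Moore1}(4). Since the extension $\bar{\xi}_r$ is only unique up to the indeterminacy $[S^{n+2},P^{n+1}(2^{r+1})]$ described by Lemma \ref{lem:Moore1}(3), which contains $i_n\eta q_{n+1}$, I can redefine $\bar{\xi}_r\mapsto \bar{\xi}_r+i_n\eta q_{n+1}$ to absorb $\varepsilon$ without disturbing (\ref{eq:zeta}) or the commutative diagram. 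The second equation in (\ref{eq:C_r-P}) then drops out by post-composing with $B(\chi^{s+1}_r)$ and iterating (\ref{eq:chi-eta}); finally, setting $\alpha^r_r=1$ and $\alpha^r_s=i_P\circ B(\chi^{r+1}_s)\circ \bar{\xi}_r$ for $r<s$ yields (\ref{eq:C_rs}) by direct substitution into (\ref{eq:chi}) and (\ref{eq:chi-eta}).

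The main obstacle I anticipate is exactly the non-uniqueness of $\bar{\xi}_r$: the naive extension may carry an $\varepsilon\cdot i_n\eta q_{n+1}$ correction term, and I must verify that the adjustment absorbing it still respects every other identity asserted in the lemma. This is why I would explicitly invoke Lemma \ref{lem:Moore1}(4) to identify the ambiguity between maps that agree on the bottom cell, and Lemma \ref{lem:Moore1}(3) to guarantee the freedom in the choice of extension needed to cancel $\varepsilon$. Once this bookkeeping is in place, the remaining relations follow by routine manipulation of the $B(\chi)$-maps and $\tilde{\eta}_r$.
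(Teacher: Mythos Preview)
Your proposal is correct and follows essentially the same route as the paper: cite Toda for $\bar{\zeta}$, obtain $\bar{\xi}_r$ as the induced map on cofibres (equivalently, as an extension across $i_\eta$), compute $\bar{\xi}_r i_n=2i_n$ to identify $\bar{\xi}_r\circ i_P$ with $B(\chi^r_{r+1})$ up to an $\varepsilon\cdot i_n\eta q_{n+1}$ term, and then absorb $\varepsilon$ by modifying $\bar{\xi}_r$. One small slip: the indeterminacy in the extension is parametrized by $[S^{n+1},P^{n+1}(2^{r+1})]=\pi_{n+1}(P^{n+1}(2^{r+1}))$ (Lemma~\ref{lem:Moore1}(2)), not $[S^{n+2},P^{n+1}(2^{r+1})]$ and Lemma~\ref{lem:Moore1}(3); the relevant element $i_n\eta$ does lie there, so the adjustment $\bar{\xi}_r\mapsto\bar{\xi}_r+i_n\eta q_{n+1}$ is legitimate and your argument goes through unchanged.
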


Let $p$ be an odd prime and let $S^{2n+1}\{p^r\}$ and $T^{2n+1}\{p^r\}$ be defined by the homotopy fibrations in which all spaces are $p$-local:
    \begin{align}
     \Omega S^{2n+1}\xra{} S^{2n+1}\{p^r\}\to S^{2n+1}\xra{p^r}S^{2n+1},\label{fib:S}\\
      W_n\times \varPi_{r+1}\to T^{2n+1}\{p^r\}\to \Omega S^{2n+1}\{p^r\},\label{fib:T}
    \end{align}
where $\varPi_{r+1}=\prod_{k=1}^{\infty}S^{2p^tn-1}\{p^{r+1}\}$,  $W_n$ is the homotopy fibre of the double suspension $E^2\colon S^{2n-1}\to \Omega^2S^{2n+1}$.
Recall that $S^{2n+1}\{p^r\}$ is $(2n-1)$-connected and $W_n$ is $(2pn-4)$-connected, we have 
\begin{equation}\label{eq:pi=0}
  \pi_{2n+1}(\varPi_{r+1})=\pi_{2n+1}(W_n)=0,~\forall~n\geq 3.
\end{equation}

\begin{lemma}\label{lem:p-odd:n=3}
  Let $p$ be an odd prime. There hold isomorphisms
  \[\pi_8(P^5(p^r))\cong\pi_8(P^6(p^r))\cong \Z/(3,p^r).\]
  For $p=3$, these two groups are respectively generated by $\tilde{\alpha}_1(5)$ and $i_5\alpha_1(5)$, where $q_5 \tilde{\alpha}_1(5)= \alpha_1(5)$ and $B(\chi^r_s)\tilde{\alpha}_1(5)=\tilde{\alpha}_1(5) $ for $s\geq r$.
  
  \begin{proof}
   By \cite{CMN79,CMN87} there are homotopy equivalences
    \begin{align}
      \Omega P^{2n+2}(p^r)&\simeq S^{2n+1}\{p^r\}\times\Omega \big(\bigvee_{k=0}^{\infty}P^{4n+2kn+3}(p^r)\big),\label{cmn-even}\\
      \Omega P^{2n+1}(p^r)&\simeq T^{2n+1}\{p^r\}\times \Omega \big(\bigvee_\alpha P^{n_\alpha}(p^r)\big),\label{cmn-odd}
  \end{align} 
    where $n_\alpha\geq 4n$ and for each $n$, there exists exactly one $\alpha$ such that $n_\alpha=4n$. 
It follows immediately that 
    \begin{align*}
      \pi_8(P^5(p^r))&\cong \pi_7(\Omega P^5(p^r))\cong \pi_7(T^5\{p^r\})\cong \pi_8(S^5\{p^r\}),\\
      \pi_8(P^6(p^r))&\cong\pi_7(\Omega P^6(p^r))\cong \pi_7(S^5\{p^r\}),
    \end{align*}
    where $\pi_8(S^5\{p^r\})\cong\Z/(3,p^r)$ and $\pi_7(S^5\{p^r\})\cong \Z/(3,p^r)$ can be easily computed by (\ref{fib:S}).

 By \cite[Proposition 2.3]{Neisen87}, there is a canonical map $\gamma_2\colon P^{5}(p^r)\to S^{5}\{p^r\}$ which is $8$-connected. It follows by the group structures that \[(\gamma_2)_\sharp\colon \pi_8(P^{5}(p^r))\xra{\cong} \pi_8(S^{5}\{p^r\}).\]
Hence there is a short exact sequence
\[0\to \pi_8(P^5(p^r))\xra{(q_5)_\sharp}\pi_8(S^5)\xra{p^r}\pi_8(S^5).\]
For $p=3$, we then get a natural isomorphism 
   \[\pi_8(P^5(3^r))\xra[\cong]{(q_5)_\sharp}\pi_8(S^5;3)\cong\Z/3\langle \alpha_1(5)\rangle.\]
  For $s\geq r$, by (\ref{eq:chi}) we have $B(\chi^r_s)\tilde{\alpha}_1(5)=\tilde{\alpha}_1(5)$.
  
 The group $\pi_8(P^6(p^r))$ is stable, its generator is clear.
  \end{proof}  
\end{lemma}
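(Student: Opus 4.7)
The plan is to analyze the $p$-primary components of $\pi_8(P^5(p^r))$ and $\pi_8(P^6(p^r))$ directly via the Puppe long exact sequences associated with the defining cofibrations
\[S^{n-1}\xra{p^r}S^{n-1}\xra{i_{n-1}}P^n(p^r)\xra{q_n}S^n\xra{-p^r}S^n.\]
Applying $[S^8,-]$ yields the five-term exact sequence
\[\pi_8(S^{n-1})\xra{p^r}\pi_8(S^{n-1})\xra{(i_{n-1})_\sharp}\pi_8(P^n(p^r))\xra{(q_n)_\sharp}\pi_8(S^n)\xra{p^r}\pi_8(S^n),\]
so the problem reduces to knowing the odd-primary structure of $\pi_8(S^m)$ for $m=4,5,6$. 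I would read these off Toda's tables: $\pi_8(S^4)\cong(\Z/2)^{\oplus 2}$ and $\pi_8(S^6)\cong\pi_2^s\cong\Z/2$ are pure $2$-torsion, whereas $\pi_8(S^5;p)\cong\Z/(3,p^r)$, generated by $\alpha_1(5)$ when $p=3$.

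For $n=5$, the vanishing of $\pi_8(S^4;p)$ at every odd $p$ makes $(q_5)_\sharp$ injective; since $p^r$ acts as zero on the cyclic $3$-group $\pi_8(S^5;3)$ whenever $r\geq 1$, and trivially for $p\geq5$, its kernel fills the whole group, so $(q_5)_\sharp$ induces an isomorphism $\pi_8(P^5(p^r);p)\cong\pi_8(S^5;p)\cong\Z/(3,p^r)$. At $p=3$ this isomorphism furnishes the unique lift $\tilde\alpha_1(5)$ characterized by $q_5\tilde\alpha_1(5)=\alpha_1(5)$. The $n=6$ case is the dual argument: the vanishing of $\pi_8(S^6;p)$ forces $(i_5)_\sharp$ to be surjective and exhibits $\pi_8(P^6(p^r);p)$ as the cokernel of $p^r$ on $\pi_8(S^5;p)$, once again $\Z/(3,p^r)$, generated by $i_5\alpha_1(5)$ when $p=3$.

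The stability identity $B(\chi^r_s)\tilde\alpha_1(5)=\tilde\alpha_1(5)$ for $s\geq r$ I would extract by applying $q_5$ and invoking the relation $q_5\circ B(\chi^r_s)=q_5$ recorded in (\ref{eq:chi}): this gives $q_5\bigl(B(\chi^r_s)\tilde\alpha_1(5)\bigr)=\alpha_1(5)$, and injectivity of $(q_5)_\sharp$ on $\pi_8(P^5(3^s);3)\cong\Z/3$ then forces the claimed equality.

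The main obstacle is simply pinning down the odd-primary content of $\pi_8(S^5)$ — everything downstream depends on it being precisely $\Z/3$ at $p=3$ and zero for $p\geq 5$. A conceptually cleaner fallback would be the Cohen--Moore--Neisendorfer decompositions of $\Omega P^n(p^r)$, which split off an $S^{2n+1}\{p^r\}$- or $T^{2n+1}\{p^r\}$-factor carrying the desired class; I would switch to that route if the elementary five-term sequence proved too coarse to pin down the generator or to verify the $B(\chi^r_s)$-stability.
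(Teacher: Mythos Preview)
Your overall strategy is sound and the conclusions are correct, but there is a genuine gap in the $n=5$ case. The sentence ``Applying $[S^8,-]$ yields the five-term exact sequence'' is not valid as stated: a cofibration sequence induces long exact sequences on $[-,Y]$, not on $[S^k,-]$. What you actually have is the long exact sequence of the pair $(P^n(p^r),S^{n-1})$ together with the Blakers--Massey comparison $\pi_k(P^n(p^r),S^{n-1})\to\pi_k(S^n)$, which is an isomorphism only for $k\leq 2n-3$ and merely an epimorphism at $k=2n-2$. For $n=6$ you are safely inside the isomorphism range at $k=8,9$, so that half of your argument is fine (and in fact cleaner than the paper's appeal to the CMN splitting). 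But for $n=5$ the critical degree $k=8=2n-2$ sits exactly on the boundary: the kernel of $\pi_8(P^5,S^4)\to\pi_8(S^5)$ is generated by the relative Whitehead product $[j,\iota_4]$, and to recover exactness at $\pi_8(P^5)$ you must check that this class is not hit from $\pi_8(P^5)$---equivalently that $\partial[j,\iota_4]=p^r[\iota_4,\iota_4]\neq 0$ in $\pi_7(S^4)$, which holds since $[\iota_4,\iota_4]$ has infinite order. Even then, the surjectivity of $(q_5)_\sharp$ onto $\ker(p^r\colon\pi_8(S^5)\to\pi_8(S^5))$ does not follow from a sequence whose exactness you have not established; it needs a separate argument.

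The paper avoids all of this by taking exactly your fallback route: the Cohen--Moore--Neisendorfer decompositions reduce both groups to $\pi_\ast(S^5\{p^r\})$, computed from the fibration $\Omega S^5\to S^5\{p^r\}\to S^5$, and the generator $\tilde\alpha_1(5)$ together with the injectivity of $(q_5)_\sharp$ is obtained from Neisendorfer's $8$-connected map $\gamma_2\colon P^5(p^r)\to S^5\{p^r\}$, which produces the short exact sequence $0\to\pi_8(P^5(p^r))\xra{(q_5)_\sharp}\pi_8(S^5)\xra{p^r}\pi_8(S^5)$ directly. Your verification of $B(\chi^r_s)\tilde\alpha_1(5)=\tilde\alpha_1(5)$ via $q_5\circ B(\chi^r_s)=q_5$ and injectivity of $(q_5)_\sharp$ is identical to the paper's.
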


\subsection{Some lemma on cohomology operations}

 Consider the following complexes
\begin{align*}
 A^{n+3}(\tilde{\eta}_r)=P^{n+1}(2^r)\cup_{\tilde{\eta}_r}e^{n+3},&\quad  A^{n+3}(\eta^2)=S^n\cup_{\eta^2}e^{n+3},\\
   A^{n+3}({2^r}\eta^2)=P^{n+1}(2^r)\cup_{i_n\eta^2}e^{n+3},&\quad B^{n+3}(\tilde{\eta}_r\eta)=P^{n}(2^r)\cup_{\tilde{\eta}_r\eta}e^{n+3},\\
   C^{n+3}(i_P\tilde{\eta}_r\eta)=C^{n+1}_r\cup_{i_P\tilde{\eta}_r\eta}e^{n+3}.
\end{align*}

 \begin{lemma}[Section 2 of \cite{lipc23}]\label{lem:Sq2}
  For each $n\geq 2,r\geq 1$, the following Steenrod squares are isomorphisms:
  \begin{enumerate}
    \item\label{Sq2-Changcpx} $\Sq^2\colon H^n(X;\z{})\to H^{n+2}(X;\z{})$ with $X=C^{n+2}_\eta$ or $C^{n+2}_r$.
    \item\label{Sq2-eta_r} $\Sq^2\colon H^{n+1}(A^{n+3}(\tilde{\eta}_r);\z{})\to H^{n+3}(A^{n+3}(\tilde{\eta}_r);\z{}).$
  \end{enumerate}

 \end{lemma}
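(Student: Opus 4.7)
The plan is to reduce both assertions to the classical fact that on the elementary two-cell mapping cone $S^m\cup_\eta e^{m+2}$, the Steenrod square $\Sq^2\colon H^m(-;\z{})\to H^{m+2}(-;\z{})$ is an isomorphism --- this is just the standard Steenrod detection of $\eta$ by a primary operation.

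For part~(1), the case $X=C^{n+2}_\eta$ is exactly this classical statement. For $X=C^{n+2}_r$ I would exploit the cofibration displayed in the excerpt,
\[S^n\xra{i_n 2^r}C^{n+2}_\eta\xra{i_\eta}C^{n+2}_r\xra{q_{n+1}}S^{n+1},\]
and run its long exact sequence in mod-$2$ cohomology. Since $r\geq 1$, the map $i_n 2^r$ is null mod $2$; combined with $H^{n+2}(S^n;\z{})=H^{n+2}(S^{n+1};\z{})=0$, this forces $i_\eta^*\colon H^j(C^{n+2}_r;\z{})\to H^j(C^{n+2}_\eta;\z{})$ to be an isomorphism for $j=n$ and $j=n+2$. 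Naturality of $\Sq^2$ along $i_\eta$ then transfers the isomorphism from $C^{n+2}_\eta$ to $C^{n+2}_r$, proving~(1).

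For part~(2), I would use the relation $q_{n+1}\tilde{\eta}_r=\eta$ from Lemma~\ref{lem:Moore1}(3). This identity says precisely that the pinch map $q_{n+1}\colon P^{n+1}(2^r)\to S^{n+1}$ carries the attaching map of the top cell of $A^{n+3}(\tilde{\eta}_r)$ to that of $S^{n+1}\cup_\eta e^{n+3}$, so $q_{n+1}$ extends canonically to a map of mapping cones
\[\bar{q}\colon A^{n+3}(\tilde{\eta}_r)\longrightarrow S^{n+1}\cup_\eta e^{n+3}\]
which is the identity on top cells. On mod-$2$ cohomology $\bar{q}^*$ is an isomorphism in degree $n+3$ (because $\bar{q}$ is the identity there) and in degree $n+1$ (because the inclusions of the bottom subcomplexes induce isomorphisms in degree $n+1$, the top cell lying in dimension $n+3$, and $q_{n+1}^*\colon H^{n+1}(S^{n+1};\z{})\to H^{n+1}(P^{n+1}(2^r);\z{})$ is an isomorphism by the usual Moore-space computation). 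Naturality of $\Sq^2$ together with the classical case for $S^{n+1}\cup_\eta e^{n+3}$ now yields the claim.

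There is no substantive obstacle here; everything is formal naturality once the two auxiliary maps $i_\eta$ and $\bar{q}$ are identified. The only routine care required is verifying the degree-wise isomorphisms in the long exact sequences, all of which reduce to vanishing of sphere cohomology in the relevant ranges.
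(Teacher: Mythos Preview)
Your argument is correct. In the paper itself this lemma is not proved but merely quoted from \cite{lipc23}, so there is no in-text proof to compare against. Your approach---reducing everything via naturality along the maps $i_\eta\colon C^{n+2}_\eta\to C^{n+2}_r$ and $\bar{q}\colon A^{n+3}(\tilde{\eta}_r)\to C^{n+3}_\eta$ to the classical detection of $\eta$ by $\Sq^2$---is exactly the standard proof one would expect, and all the cohomological isomorphisms you invoke check out from the long exact sequences of the relevant cofibrations.
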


\begin{lemma}\label{lem:Theta}
Let  $n\geq 3,r\geq 1$ and let $X=A^{n+3}(\eta^2)$, $A^{n+3}(2^r\eta^2)$, $B^{n+3}(\tilde{\eta}_r\eta)$ or $C^{n+3}(i_P\tilde{\eta}_r\eta)$. Then 
\[0\neq \Theta\colon H^n(X;\z{})\to H^{n+3}(X;\z{}).\]
 
\begin{proof}
  For the complexes $X$ in the Lemma,  it is clear that 
    \[S_n(X)=H^n(X;\z{}),\quad T_n(X)=H^{n+3}(X;\z{}).\]
   $\Theta\neq 0$ for $X=A^{n+3}(\eta^2)$ or $A^{n+3}(2^r\eta^2)$ refers to \cite[Lemma 2.7]{lipc23}. 
 For $X=B^{n+3}(\tilde{\eta}_r\eta)$ or $C^{n+3}(i_P\tilde{\eta}_r\eta)$, There are homotopy commutative diagrams of homotopy cofibrations 
  \[\begin{tikzcd}[sep=scriptsize]
    \ast\ar[r]\ar[d]&S^{n-1}\ar[d,"i_{n-1}"]\ar[r,equal]&S^{n-1}\ar[d,"i_{n-1}"] \\
    S^{n+2}\ar[r,"\tilde{\eta}_r\eta"]\ar[d,equal]&P^{n}(2^r)\ar[r]\ar[d,"q_n"]& B^{n+3}(\tilde{\eta}_r\eta)\ar[d,"q"]\\
    S^{n+2}\ar[r,"\eta^2"]&S^n\ar[r]& A^{n+3}(\eta^2)
  \end{tikzcd},~\begin{tikzcd}[sep=scriptsize]
    \ast\ar[d]\ar[r]& S^n\ar[r,equal]\ar[d,"i_{n-1}\eta"]&S^n\ar[d,"i_{n-1}\eta"]\\
    S^{n+2}\ar[r,"\tilde{\eta}_r\eta"]\ar[d,equal]&P^n(2^r)\ar[d,"i_P"]\ar[r]&B^{n+3}(\tilde{\eta}_r\eta)\ar[d,"i"]\\
    S^{n+2}\ar[r,"i_P\tilde{\eta}_r\eta"]&C^{n+1}_r\ar[r]&C^{n+3}(i_P\tilde{\eta}_r\eta)
  \end{tikzcd}\]
 Then we get $\Theta\neq 0$ for $X=B^{n+3}(\tilde{\eta}_r\eta),C^{n+3}(i_P\tilde{\eta}_r\eta)$ from the following induced commutative diagram of mod $2$ cohomology groups (the coefficients $\z{}$ are omitted)
  \[\begin{tikzcd}[sep=scriptsize]
    H^n(A^{n+3}(\eta^2))\ar[r,"q^\ast","\cong"swap]\ar[d,"\Theta","\cong"swap]& H^n(B^{n+3}(\tilde{\eta}_r\eta))\ar[d,"\Theta"] &H^n(C^{n+3}(i_P\tilde{\eta}_r\eta))\ar[l,"i^\ast"swap,"\cong"]\ar[d,"\Theta"]\\
    H^{n+3}(A^{n+3}(\eta^2))\ar[r,"q^\ast","\cong"swap]& H^{n+3}(B^{n+3}(\tilde{\eta}_r\eta))&H^{n+3}(C^{n+3}(i_P\tilde{\eta}_r\eta))\ar[l,"i^\ast"swap,"\cong"]
  \end{tikzcd},\]
  where the isomorphisms $q^\ast$ and $i^\ast$ can be easily checked.
   \end{proof}
  \end{lemma}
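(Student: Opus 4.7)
The plan is to verify non-triviality of the secondary operation $\Theta$ on each of the four complexes by reducing everything, via naturality of $\Theta$, to the prototypical case $X=A^{n+3}(\eta^2)=S^n\cup_{\eta^2}e^{n+3}$, for which $\Theta$ detects $\eta^2\in\pi_{n+2}(S^n)$ by its very definition. Before invoking naturality, I would first confirm in each of the four cases that the hypotheses $S_n(X)=H^n(X;\z{})$ and $T_n(X)=H^{n+3}(X;\z{})$ hold, so that $\Theta$ is defined and single-valued on the bottom generator; this is a routine cell-by-cell verification, using Lemma \ref{lem:Sq2} and the known action of $\Sq^1$ on Moore spaces to rule out any primary Steenrod operation in the relevant degrees.

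For $X=A^{n+3}(2^r\eta^2)=P^{n+1}(2^r)\cup_{i_n\eta^2}e^{n+3}$, the pinch map $q_n\colon P^{n+1}(2^r)\to S^n$ extends to a map $A^{n+3}(2^r\eta^2)\to A^{n+3}(\eta^2)$, because $q_n\circ i_n\eta^2=\eta^2$, and this extension is an isomorphism on $H^n$ and $H^{n+3}$ with $\z{}$-coefficients; naturality of $\Theta$ transports non-triviality from $A^{n+3}(\eta^2)$. These first two cases are essentially already in the literature, and I would cite \cite{lipc23}.

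For the two remaining complexes, I would write down ladders of cofibre sequences comparing them to $A^{n+3}(\eta^2)$. For $B^{n+3}(\tilde{\eta}_r\eta)=P^n(2^r)\cup_{\tilde{\eta}_r\eta}e^{n+3}$, the crucial input is the identity $q_n\circ\tilde{\eta}_r=\eta$ from \eqref{eq:eta}: post-composing the attaching map with the pinch $q_n\colon P^n(2^r)\to S^n$ produces exactly $\eta^2$, so there is an induced map $q\colon B^{n+3}(\tilde{\eta}_r\eta)\to A^{n+3}(\eta^2)$ which collapses the $(n-1)$-cell and is an isomorphism on $H^n(-;\z{})$ and $H^{n+3}(-;\z{})$. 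For $C^{n+3}(i_P\tilde{\eta}_r\eta)=C^{n+1}_r\cup_{i_P\tilde{\eta}_r\eta}e^{n+3}$, the inclusion $i_P\colon P^n(2^r)\hookrightarrow C^{n+1}_r$ extends to a map $i\colon B^{n+3}(\tilde{\eta}_r\eta)\to C^{n+3}(i_P\tilde{\eta}_r\eta)$ because the attaching maps agree under $i_P$; comparison of the cofibre exact sequences shows that $i^\ast$ is an isomorphism on $H^n$ and $H^{n+3}$ with $\z{}$-coefficients. Chaining these two maps with naturality of $\Theta$ transfers non-triviality from $A^{n+3}(\eta^2)$ to both complexes.

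The main obstacle I expect is bookkeeping: writing a cofibre ladder with the correct vertical maps so that the horizontal attaching maps match up, and verifying the cohomological isomorphism claims (in particular that the pinch $q_n$ on $P^n(2^r)$ contributes an isomorphism on $H^n(-;\z{})$ because the top cell of $P^n(2^r)$ is in dimension $n$). Once the comparison maps $q$ and $i$ are in place, the cohomology bookkeeping is routine, but assembling a single homotopy-commutative $3\times 3$ diagram that exhibits the comparison is where the argument most benefits from care.
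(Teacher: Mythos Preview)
Your approach matches the paper's exactly: defer the cases $A^{n+3}(\eta^2)$ and $A^{n+3}(2^r\eta^2)$ to \cite{lipc23}, then construct the comparison maps $q\colon B^{n+3}(\tilde{\eta}_r\eta)\to A^{n+3}(\eta^2)$ (from $q_n\tilde{\eta}_r=\eta$) and $i\colon B^{n+3}(\tilde{\eta}_r\eta)\to C^{n+3}(i_P\tilde{\eta}_r\eta)$ (from $i_P$), check that $q^\ast$ and $i^\ast$ are isomorphisms on $H^n$ and $H^{n+3}$ with $\z{}$-coefficients, and transport non-triviality of $\Theta$ by naturality. One minor slip in your sketch for $A^{n+3}(2^r\eta^2)$: there is no pinch map $P^{n+1}(2^r)\to S^n$ (the pinch goes to $S^{n+1}$, and any map $P^{n+1}(2^r)\to S^n$ restricts to even degree on the bottom sphere, so cannot satisfy $q_n\circ i_n\eta^2=\eta^2$); the comparison actually runs the other way via $i_n\colon S^n\to P^{n+1}(2^r)$, but since you cite \cite{lipc23} for this case the slip does not affect your argument.
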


  \begin{lemma}\label{lem:p-odd:n=3:P1}
  For $r,s\geq 1$, let $X=S^5\cup_{\alpha_1(5)}e^9$, $P^{6}(3^r)\cup_{i_5\alpha_1(5)}e^9$ or $X= P^{5}(3^s)\cup_{\tilde{\alpha}_1(5)}e^9$.
   The first Steenrod power $\PP$ acts non-trivially on $H^\ast(Y;\Z/3)$. 
   \begin{proof}
     It is well-known that $\PP$ is non-trivial for $Y=S^5\cup_{\alpha_1(5)}e^9$ (cf. \cite[Chapter 1.5.5]{Harperbook}).  For the latter two complexes, $\PP\neq 0$ can be easily deduced from the naturality of $\PP$ and the relations in Lemma \ref{lem:p-odd:n=3}.  
   \end{proof}
  \end{lemma}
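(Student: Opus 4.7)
The plan is to treat the sphere complex as the base case---where nontriviality of $\PP$ is classical---and then bootstrap the two Moore-space cases by naturality of $\PP$ along explicit maps of cofibre sequences constructed from $i_5$ and $q_5$. For $Y_0 = S^5\cup_{\alpha_1(5)} e^9$ I would simply invoke the standard fact that $\alpha_1$ is detected by $\PP$, as recorded in \cite[Chapter 1.5.5]{Harperbook}: the induced operation $\PP\colon H^5(Y_0;\Z/3)\to H^9(Y_0;\Z/3)$ is an isomorphism between two copies of $\Z/3$. This serves as the anchor for the other two cases.

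For $Y_1 = P^6(3^r)\cup_{i_5\alpha_1(5)} e^9$, the inclusion $i_5\colon S^5\to P^6(3^r)$ intertwines the two attaching maps by construction, so it extends to a morphism of cofibre sequences
\begin{equation*}
\begin{tikzcd}
S^8\ar[r,"\alpha_1(5)"]\ar[d,equal] & S^5\ar[d,"i_5"]\ar[r] & Y_0\ar[d,"\phi"] \\
S^8\ar[r,"i_5\alpha_1(5)"] & P^6(3^r)\ar[r] & Y_1
\end{tikzcd}
\end{equation*}
producing $\phi\colon Y_0\to Y_1$. A short diagram chase in the mod-$3$ cohomology long exact sequence associated to $S^5\to P^6(3^r)\to S^6$ (where the connecting map is dual to multiplication by $3^r$ and hence vanishes mod $3$) shows that $i_5^*\colon H^5(P^6(3^r);\Z/3)\to H^5(S^5;\Z/3)$ is an isomorphism, while $\phi^*$ is patently an isomorphism on the top cells. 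Thus $\phi^*$ is an iso in degrees $5$ and $9$, and the naturality identity $\phi^*\circ\PP = \PP\circ\phi^*$ transports nonvanishing from $Y_0$ to $Y_1$.

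The third case $Y_2 = P^5(3^s)\cup_{\tilde{\alpha}_1(5)} e^9$ is handled symmetrically, with arrows reversed. Using the relation $q_5\tilde{\alpha}_1(5)=\alpha_1(5)$ supplied by Lemma \ref{lem:p-odd:n=3}, the pinch map $q_5\colon P^5(3^s)\to S^5$ induces $\psi\colon Y_2\to Y_0$, and again $q_5^*\colon H^5(S^5;\Z/3)\to H^5(P^5(3^s);\Z/3)$ is an isomorphism---by the same Bockstein-vanishing argument applied to the cofibre sequence $S^4\xra{3^s}S^4\to P^5(3^s)\to S^5$---while $\psi^*$ is an iso on the top cells. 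Naturality of $\PP$ then forces $\PP\neq 0$ on $Y_2$. The only place I expect to have to spend real care is the two mod-$3$ cohomology computations showing that $\phi^*$ and $\psi^*$ do send degree-$5$ generators to degree-$5$ generators; everything else is a formal manipulation of cofibre sequences and the naturality square $f^*\circ\PP = \PP\circ f^*$.
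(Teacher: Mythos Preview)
Your proposal is correct and follows exactly the approach the paper sketches: the base case $Y_0=S^5\cup_{\alpha_1(5)}e^9$ is classical, and the two Moore-space cases are then obtained by naturality of $\PP$ along the maps of cofibre sequences induced by $i_5$ and by $q_5$ (using $q_5\tilde{\alpha}_1(5)=\alpha_1(5)$ from Lemma~\ref{lem:p-odd:n=3}). The paper's proof is a two-line summary of precisely this argument, so you have simply unpacked the details the authors left implicit.
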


\subsection{Matrix analysis method}

Let $X=\Sigma X'$, $Y_i=\Sigma Y_i'$ be suspensions, $i=1,2,\cdots,n$. Let \[i_l\colon Y_l\to \bigvee_{j=i}^nY_i,\quad p_k\colon \bigvee_{i=1}^n Y_i\to Y_k\] be respectively the canonical inclusions and projections, $1\leq k,l\leq n$. By the Hilton-Milnor theorem,  we may write a map
  \(f\colon X\to\bigvee_{i=1}^nY_i\)
  as \[f=\sum_{k=1}^n i_k\circ f_{k}+\theta,\]
  where $f_{k}=p_k\circ f\colon X\to Y_k$ and $\theta$ satisfies $\Sigma \theta=0$. The first part $\sum_{k=1}^n i_k\circ f_{k}$ is usually represented by a vector $u_f=(f_1,f_2,\cdots,f_n)^t.$ 
  We say that $f$ is completely determined by its components $f_k$ if $\theta=0$; in this case, denote $f=u_f$. Let $h=\sum_{k,l}i_lh_{lk}p_k$ be a self-map of $\bigvee_{i=1}^nY_i$ which is completely determined by its components $h_{kl}=p_k\circ h\circ i_l\colon Y_l\to Y_k$. Denote by   
  \[M_h\coloneqq (h_{kl})_{n\times n}=\begin{bmatrix}
    h_{11}&h_{12}&\cdots&h_{1n}\\
    h_{21}&h_{22}&\cdots&h_{2n}\\
    \vdots&\vdots&\ddots&\vdots\\
    h_{n1}&h_{n1}&\cdots&h_{nn}
  \end{bmatrix}\] 
  Then the composition law
  \(h(f+g)\simeq h f+h g\)
  implies that the product
  \[M_h[f_1,f_2,\cdots,f_n]^t\]
  given by the matrix multiplication represents the composite $h\circ f$.
  Two maps $f=u_f$ and $g=u_g$ are called \emph{equivalent}, denoted by 
  \[[f_1,f_2,\cdots,f_n]^t\sim [g_1,g_2,\cdots,g_n]^t,\]
  if there is a self-homotopy equivalence $h$ of $\bigvee_{i=1}^n Y_i$, which can be represented by the matrix $M_h$, such that 
  \[M_h[f_1,f_2,\cdots,f_n]^t\simeq [g_1,g_2,\cdots,g_n]^t.\] 
  Note that the above matrix multiplication refers to elementary row operations in matrix theory; and the homotopy cofibres of the maps $f=u_f$ and $g=u_g$ are homotopy equivalent if $f$ and $g$ are equivalent. 
 
  \begin{example}\label{ex}
 Using the relations (\ref{eq:C_r-P}), we have the equivalences for maps $S^6\to P^4(2^r)\vee C^5_s$: 
    \[\matwo{\tilde{\eta}_r\eta}{i_P\tilde{\eta}_s\eta}\sim  \matwo{\tilde{\eta}_r\eta}{0} \text{ if }r\leq s; \quad \matwo{\tilde{\eta}_r\eta}{i_P\tilde{\eta}_s\eta}\sim \matwo{0}{i_P\tilde{\eta}_s\eta}\text{ if }r>s.\] 
  
  \end{example}

  We end this section with the following useful lemma.

  \begin{lemma}[Lemma 6.4 of \cite{HL}]\label{lem:HL}
    Let $S\xra{f}(\bigvee_{i=1}^nA_i)\vee B \xra{g}\Sigma C$ be a homotopy cofibration of simply-connected CW-complexes. For each $j=1,\cdots, n$, let \[p_j\colon \big(\bigvee_{i}A_i\big)\vee B\to A_j,\quad q_B\colon \big(\bigvee_{i}A_i\big)\vee B\to B\] be the obvious projections. Suppose that the composite $p_jf$
  is null-homotopic for each $j\leq n$, then there is a homotopy equivalence
  \[\Sigma C\simeq \big(\bigvee_{i=1}^nA_i\big)\vee C_{q_Bf},\]
  where $C_{q_Bf}$ is the homotopy cofibre of the composite $q_Bf$.
  \end{lemma}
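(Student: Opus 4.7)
The plan is to reduce the study of $\Sigma C$ to that of the cofiber of the simpler map $i_B\circ q_B f$, which visibly decomposes as a wedge. Concretely, I would construct an explicit comparison map
\[
\Phi\colon \bigl(\bigvee_{i=1}^n A_i\bigr)\vee C_{q_B f}\longrightarrow \Sigma C,
\]
and then check it is a homotopy equivalence by Whitehead's theorem, which applies since everything in sight is simply-connected.

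For the construction of $\Phi$, on each summand $A_j$ I would take the composite $A_j\hookrightarrow (\bigvee_i A_i)\vee B\xrightarrow{g}\Sigma C$. On $C_{q_B f}$, the idea is to extend the map $g\circ i_B\colon B\to \Sigma C$ over the cofiber of $q_B f$. By the universal property of cofibers, such an extension exists precisely when the composite
\[
S\xrightarrow{q_B f} B\xrightarrow{i_B}\bigl(\bigvee_i A_i\bigr)\vee B\xrightarrow{g}\Sigma C
\]
is null-homotopic. This null-homotopy should follow because the hypothesis $p_j f\simeq\ast$ for every $j$ forces $f\simeq i_B\circ q_B f$, and then $g\circ i_B\circ q_B f\simeq g\circ f\simeq \ast$ from the cofibration structure.

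For the verification that $\Phi$ is an equivalence, I would compute the induced map on integral homology. The long exact sequence of $S\xrightarrow{f}(\bigvee_i A_i)\vee B\xrightarrow{g}\Sigma C$ combined with the observation that $f_\ast\colon H_\ast(S)\to H_\ast(\bigvee_i A_i)\oplus H_\ast(B)$ has vanishing $A_j$-component (since $p_j f\simeq\ast$) shows that $f_\ast=(i_B q_B f)_\ast$. Therefore the long exact sequence splits off a direct summand $H_\ast(\bigvee_i A_i)$, and the remaining piece matches the long exact sequence of $S\xrightarrow{q_B f}B\to C_{q_B f}$. A diagram chase confirms that $\Phi_\ast$ realizes this identification, and Whitehead's theorem finishes the argument.

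The main obstacle is the homotopy-level identification $f\simeq i_B q_B f$, or equivalently, the nullity of the composite $g\circ i_B\circ q_B f$. At the level of homology this is immediate from the hypothesis $p_j f\simeq \ast$, but at the homotopy level the difference $f-i_B q_B f$ could in principle lie in the kernel of $\pi_\ast((\bigvee_i A_i)\vee B)\to\pi_\ast(\bigvee_i A_i)\oplus\pi_\ast(B)$, i.e.\ it could be carried by Whitehead products mixing the $A_j$'s and $B$. In the applications of this paper $S$ is a sphere and dimension/connectivity considerations via the Hilton-Milnor decomposition rule out these Whitehead-product contributions, or show that $g$ sends any such terms to zero; this is the step that would require the most care, whereas the rest of the argument is essentially formal.
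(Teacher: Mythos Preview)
The paper does not prove this lemma; it is quoted verbatim from \cite{HL} (Lemma 6.4) and only \emph{used} throughout. So there is no internal proof to compare against, and I will instead comment on the soundness of your plan.

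Your plan has a genuine gap, and you have correctly located it. The construction of $\Phi$ on the summand $C_{q_Bf}$ requires $g\circ i_B\circ (q_Bf)\simeq\ast$, and you try to deduce this from $f\simeq i_B\circ q_Bf$. But that last homotopy is \emph{false} in general: take $S=S^3$, $A_1=B=S^2$, and $f=[\iota_{A_1},\iota_B]$ the Whitehead product; then $p_1f\simeq\ast$ and $q_Bf\simeq\ast$, yet $f\not\simeq\ast$. Your fallback, that in the paper's applications Hilton--Milnor dimension counts kill the Whitehead product part, is true in many of those applications but does \emph{not} prove the lemma as stated; it would only give a weaker statement tailored to those cases.

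The clean fix is to build the comparison map in the opposite direction, and this is exactly where the hypothesis that the cofibre is a \emph{suspension} $\Sigma C$ enters. From $p_jf\simeq\ast$ one gets extensions $\bar p_j\colon \Sigma C\to A_j$ with $\bar p_j\circ g\simeq p_j$; from the square $q_B\circ f=(q_Bf)\circ 1_S$ one gets the induced map of cofibres $\bar q\colon \Sigma C\to C_{q_Bf}$. Because $\Sigma C$ is a co-$H$-space one may \emph{add} these to obtain
\[
\Psi=\sum_{j=1}^n i_{A_j}\bar p_j + i_{C_{q_Bf}}\bar q\colon \Sigma C\longrightarrow \bigl(\textstyle\bigvee_{j}A_j\bigr)\vee C_{q_Bf}.
\]
Now your homology argument goes through unchanged (it never used $f\simeq i_Bq_Bf$, only the vanishing $(p_jf)_\ast=0$): the ladder of long exact sequences induced by $q_B$ shows $\bar q_\ast$ is surjective with kernel $g_\ast\bigl(\bigoplus_j H_\ast(A_j)\bigr)$, while $\bar p_j\circ g\simeq p_j$ shows the $\bar p_j$'s split that kernel. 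Hence $\Psi_\ast$ is an isomorphism and Whitehead finishes. In short: your verification step is fine, but your construction step must exploit the co-$H$ structure on the target $\Sigma C$ rather than attempt the (generally impossible) factorisation of $f$ through $B$.
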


 \section{The $(2n+1)$-th homology section of $M$}\label{sect:homldecomp}

 Recall the homology decompositions of simply-connected spaces (cf. \cite{Arkbook}).
 Let $M$ be an $(n-1)$-connected $(2n+2)$ dimensional manifold with $H_\ast(M)$ given by (\ref{HM}). Denote by $M_{k}$ the $k$-th homology section in the homology decomposition of $M$. 
There are homotopy cofibrations in which the labelled attaching maps induce trivial homomorphism in homology (or called \emph{homologically trivial}):
\begin{equation}\label{Cofs}
  \begin{aligned}
   &\big(\bigvee_{i=1}^d S^n\big)\vee P^{n+1}(T)\xra{f}M_{n}\simeq\big(\bigvee_{i=1}^l S^n\big)\vee P^{n+1}(T)\to  M_{n+1},\\
&\bigvee_{i=1}^l S^{n+1}\xra{g}M_{n+1}\to  M_{n+2}=M_{2n+1},~~S^{2n+1}\xra{h}M_{2n+1}\to M.
\end{aligned}
\end{equation}

\begin{lemma}\label{lem:n+1}
  There is a homotopy equivalence
  \[M_{n+1}\simeq \big(\bigvee_{i=1}^l S^{n}\big)\vee \big(\bigvee_{i=1}^d S^{n+1}\big)\vee P^{n+1}(T)\vee P^{n+2}(T).\]
\begin{proof}
Consider the first homotopy cofibration in (\ref{Cofs}). It suffices to show that $f$, or equivalently each of the following components of the map $f$ is null-homotopic:
\begin{align*}
  f_j\colon &S^n_j\hookrightarrow \bigvee_{i=1}^d S^{n}\hookrightarrow \big(\bigvee_{i=1}^d S^{n}\big)\vee P^{n+1}(T)\xra{f}M_{n},~j=1,2\cdots,l; \\
  f_P\colon & P^{n+1}(T)\hookrightarrow  \big(\bigvee_{i=1}^d S^{n}\big)\vee P^{n+1}(T)\xra{f}M_{n}.
\end{align*}

Consider the commutative diagram of homotopy cofibrations:
\[\begin{tikzcd}[sep=scriptsize]
  \bigvee_{j=1}^tS^{n}\ar[d,"m_T"]\ar[r]&\ast\ar[r]\ar[d]&\bigvee_{i=1}^tS^{n+1}\ar[d,"\matwo{\ast}{m_T}"]\\
  \bigvee_{j=1}^tS^{n}\ar[r,"\ast"]\ar[d,"\iota"]& M_{n}\ar[r]\ar[d,equal]&M_{n}\vee \big(\bigvee_{j=1}^tS^{n+1}\big)\ar[d]\\
  P^{n+1}(T)\ar[r,"f_P"]& M_{n}\ar[r]&C_{f_P}
\end{tikzcd},\]   
where $m_T=\bigvee_jp_j^{r_j}$. Since $f$ is homologically trivial, so are $f_j$ and $f_P$. The Hurewizc theorem then implies that both $f_j$ and $f_P\circ \iota$ are null-homotopic, $j=1,2,\cdots,l$.
Thus the homotopy cofibration on third column implies that there is a homotopy equivalence 
\[C_{f_P}\simeq M_{n}\vee P^{n+2}(T),\]
which in turn implies that $f_P$ is null-homotopic.
\end{proof}
\end{lemma}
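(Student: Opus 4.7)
The plan is to dispose of the attaching map $f$ in the first cofibration of (\ref{Cofs}) piece by piece. Decompose $f$ according to the wedge structure of its domain as $(f_1,\ldots,f_d,f_P)$, where $f_j\colon S^n\to M_n$ is the restriction to the $j$-th sphere and $f_P\colon P^{n+1}(T)\to M_n$ is the restriction to the Peterson summand. Since $M_n\simeq(\bigvee_{i=1}^l S^n)\vee P^{n+1}(T)$ is $(n-1)$-connected, the Hurewicz homomorphism $\pi_n(M_n)\xrightarrow{\cong}H_n(M_n)=\Z^l\oplus T$ is an isomorphism, and the homological triviality of $f$ forces each $f_j\simeq *$. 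Consequently $f$ factors through $P^{n+1}(T)$ and $M_{n+1}=C_f\simeq C_{f_P}\vee\bigvee_{i=1}^d S^{n+1}$, so the remaining task is to identify $C_{f_P}$.

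For $f_P$, I would use the defining cofibration $\bigvee_{j} S^n\xrightarrow{m_T}\bigvee_{j} S^n\xrightarrow{\iota}P^{n+1}(T)$, where $m_T$ has components of degrees $r_j$. The composite $f_P\circ\iota$ is again homologically trivial, and by the same Hurewicz argument it is null-homotopic. I would then apply the $3\times 3$ (octahedral) lemma to the diagram whose columns are this cofibration and $\ast\to M_n\xrightarrow{=}M_n$, and whose bottom row is $P^{n+1}(T)\xrightarrow{f_P}M_n\to C_{f_P}$. The vanishing of $f_P\circ\iota$ makes the middle row a wedge cofibration $\bigvee S^n\xrightarrow{*}M_n\to M_n\vee\bigvee S^{n+1}$, and a direct inspection shows that the map induced on cofibers in the right column takes the form $(*,\Sigma m_T)\colon\bigvee S^{n+1}\to M_n\vee\bigvee S^{n+1}$. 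Its cofiber is $M_n\vee C_{\Sigma m_T}=M_n\vee P^{n+2}(T)$, hence $C_{f_P}\simeq M_n\vee P^{n+2}(T)$.

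Combining the two steps yields
\[
M_{n+1}\simeq M_n\vee P^{n+2}(T)\vee\bigvee_{i=1}^d S^{n+1},
\]
which after substituting the known form of $M_n$ is the claimed decomposition. The step requiring most care is the last identification: verifying that the map induced on cofibers in the right column of the $3\times 3$ diagram really is $(*,\Sigma m_T)$, without a stray cross term that would couple $P^{n+2}(T)$ nontrivially to $M_n$. Once a null-homotopy of $f_P\circ\iota$ is chosen and carried through the $3\times 3$ construction, this drops out from the naturality of cofiber sequences; the rest of the argument is routine Hurewicz-type vanishing.
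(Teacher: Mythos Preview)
Your argument is correct and follows essentially the same route as the paper: decompose $f$ into its sphere and Peterson components, kill the sphere components by Hurewicz, and then run the $3\times 3$ diagram built from the cofibration $\bigvee S^n\xrightarrow{m_T}\bigvee S^n\xrightarrow{\iota}P^{n+1}(T)$ to identify $C_{f_P}\simeq M_n\vee P^{n+2}(T)$. The paper writes the right-column map as $\smatwo{\ast}{m_T}$ without further comment, and your caution about a possible cross term is reasonable but unnecessary here: since the top-left square has both horizontal maps literally constant, the induced map on strict mapping cones is visibly $(\ast,\Sigma m_T)$. The only cosmetic differences are that the paper goes one step further to conclude $f_P\simeq\ast$ (using that $M_n\hookrightarrow C_{f_P}$ admits a retraction), and that the components of $m_T$ are the prime-power orders of the cyclic summands of $T$, not the exponents $r_j$.
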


\begin{lemma}\label{lem:2n+1}
 There is a homotopy equivalence 
\[M_{2n+1}\simeq \big(\bigvee_{i=1}^dS^{n+1}\big)\vee P^{n+2}(T)\vee C_{\varphi_n},\]
where $\varphi_n\colon \bigvee_{i=1}^l S^{n+1}\to \big(\bigvee_{i=1}^lS^n\big)\vee P^{n+1}(T)$ is a homologically trivial map.
Furthermore, there holds homotopy equivalences 
\[ \Sigma C_{\varphi_2}\simeq C_{\varphi_3}, ~~  C_{\varphi_n}\simeq P^{n+1}(T_{\neq 2})\vee C_{\bar{\varphi}_n} \text{ for $n\geq 3$},\]
where $\bar{\varphi}_n\colon \bigvee_{i=1}^l S^{n+1}\to \big(\bigvee_{i=1}^lS^n\big)\vee P^{n+1}(T_2)$.

\begin{proof}
There is a homotopy cofibration $\bigvee_{i=1}^lS^{n+1}\xra{g}M_{n+1}\to M_{2n+1}$ with $M_{n+1}$ given by Lemma \ref{lem:n+1}. Consider the compositions 
  \begin{align*}
  g_{j}\colon & S^{n+1}\hookrightarrow \bigvee_{i=1}^lS^{n+1}\xra{g}M_{n+1}\to \big(\bigvee_{i=1}^dS^{n+1}\big)\to S^{n+1}_j;\\
  g_P\colon & S^{n+1}\hookrightarrow \bigvee_{i=1}^lS^{n+1}\xra{g}M_{n+1}\to P^{n+2}(T).
  \end{align*}
  Since $g$ is homologically trivial, so are $g_{j}$ and $g_P$.
  The Hurewicz theorem then implies that $g_{S,j}$ and $g_P$ are null-homotopic. 
  The homotopy equivalence for $M_{2n+1}$ in the Lemma then follows by Lemma \ref{lem:HL}.

For $n\geq 3$ and odd primes $p$, $\pi_{n+1}(P^{n+1}(p^r))=0$ implies the map $\varphi_n$ factors as the composition  
 \[\bigvee_{i=1}^lS^{n+1}\xra{\bar{\varphi}_n} \big(\bigvee_{i=1}^lS^n\big)\vee P^{n+1}(T_2)\hookrightarrow \big(\bigvee_{i=1}^lS^n\big)\vee P^{n+1}(T),\]
thus we have $C_{\varphi_n}\simeq  P^{n+1}(T_{\neq 2})\vee C_{\bar{\varphi}_n}.$
If $n=2$, we do once suspension to make $P^4(T_{\neq 2})$ split off $\Sigma C_{\varphi_2}$ and cancel the possible Whitehead products in $\varphi_2$; thus 
$\Sigma C_{\varphi_2}\simeq  P^4(T_{\neq 2})\vee C_{\bar{\varphi}_3}\simeq C_{\varphi_3}.$
\end{proof}
\end{lemma}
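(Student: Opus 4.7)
The plan is to invoke the second homotopy cofibration displayed in \eqref{Cofs}, namely $\bigvee_{i=1}^l S^{n+1} \xrightarrow{g} M_{n+1} \to M_{2n+1}$, and combine it with the explicit decomposition of $M_{n+1}$ furnished by Lemma~\ref{lem:n+1}. I would first decompose $g$ into components by post-composing with the projection of $M_{n+1}$ onto each of its wedge summands. The components landing in $S^{n+1}_j$ (one of the $d$ spheres) or in $P^{n+2}(T)$ are maps from $S^{n+1}$ into an $n$-connected target. Since $g$ is homologically trivial by construction, each such component has trivial Hurewicz image in degree $n+1$, so by the Hurewicz theorem it is null-homotopic. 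Applying Lemma~\ref{lem:HL} peels off these wedge summands, giving $M_{2n+1} \simeq \big(\bigvee_{i=1}^d S^{n+1}\big) \vee P^{n+2}(T) \vee C_{\varphi_n}$, where $\varphi_n$ is defined as the composition of $g$ with projection onto the remaining factor $\big(\bigvee_{i=1}^l S^n\big) \vee P^{n+1}(T)$, and remains homologically trivial.

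For the secondary splitting when $n \geq 3$, I separate $T = T_2 \oplus T_{\neq 2}$ and isolate the odd-primary Moore summand. The crucial input is Lemma~\ref{lem:Moore1}(2): for every odd prime $p$ and every $r \geq 1$, the group $\pi_{n+1}(P^{n+1}(p^r))$ vanishes when $n \geq 3$. Consequently, the projection of each sphere summand of $\varphi_n$ onto $P^{n+1}(T_{\neq 2})$ is null-homotopic, so $\varphi_n$ factors through $\big(\bigvee_{i=1}^l S^n\big) \vee P^{n+1}(T_2)$ via some map $\bar{\varphi}_n$, and the resulting wedge decomposition $C_{\varphi_n} \simeq P^{n+1}(T_{\neq 2}) \vee C_{\bar{\varphi}_n}$ follows.

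The subtle case $n = 2$ is where the main obstacle lies. The hypothesis of Lemma~\ref{lem:Moore1}(2) requires $n \geq 3$, and indeed $\pi_3(P^3(p^r))$ for odd $p$ need not vanish; moreover $\varphi_2$ could a priori contain Whitehead-product contributions landing in $\pi_3(S^2)$, so no analogous factoring exists for $\varphi_2$ itself. The remedy is to pass to a single suspension: the map $\Sigma\varphi_2 \colon \bigvee S^4 \to \big(\bigvee S^3\big) \vee P^4(T)$ has target in which $\pi_4(P^4(p^r)) = 0$ for odd $p$ by Lemma~\ref{lem:Moore1}(2) (applied with $n=3$), and any Whitehead products possibly present in $\varphi_2$ are annihilated by suspension. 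Repeating the factoring argument for $\Sigma\varphi_2$ yields $\Sigma C_{\varphi_2} \simeq P^4(T_{\neq 2}) \vee C_{\bar{\varphi}_3}$, which is precisely the shape of $C_{\varphi_3}$ from the $n = 3$ case, establishing the asserted equivalence $\Sigma C_{\varphi_2} \simeq C_{\varphi_3}$.
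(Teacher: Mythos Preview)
Your proposal is correct and follows essentially the same approach as the paper: both use the cofibration from \eqref{Cofs} together with Lemma~\ref{lem:n+1}, kill the $S^{n+1}$- and $P^{n+2}(T)$-components of $g$ via the Hurewicz theorem and homological triviality, apply Lemma~\ref{lem:HL}, and then invoke $\pi_{n+1}(P^{n+1}(p^r))=0$ for odd $p$ (with a single suspension to handle $n=2$).
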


\begin{lemma}\label{lem:2n+1:n>2}
Let $T_2\cong\bigoplus_{i=1}^{t_2}\z{r_i}$ and let $n\geq 3$. If the Steenrod square $\Sq^2$ acts trivally on $H^n(C_{\varphi_n};\z{})$, then there is a homotopy equivalence 
\[C_{\varphi_n}\simeq \big(\bigvee_{i=1}^lS^n\big)\vee P^{n+1}(T)\vee \big(\bigvee_{i=1}^lS^{n+2}\big);\]
otherwise there exist integers $0\leq c_1\leq l,0\leq c_2\leq t_2$ with $0<c_1+c_2\leq l$ ($c_1=0$ or $c_2=0$ if $l=1$) such that 
\begin{multline*}
  C_{\varphi_n}\simeq \big(\bigvee_{i=1}^{l-c_1} S^{n}\big)\vee\big(\bigvee_{i=1}^{c_1} C^{n+2}_\eta\big)\vee \big(\bigvee_{i=1}^{l-c_1-c_2} S^{n+2}\big)\vee P^{n+1}(T_{\neq 2})\\
  \vee \big(\bigvee_{j=c_2+1}^{t_2} P^{n+1}(2^{r_j})\big) \vee \big(\bigvee_{j=1}^{t_2} C^{n+2}_{r_j}\big) .
\end{multline*} 

\begin{proof}
Write $\bar{\varphi}=\bar{\varphi}_n$ for simplicity. Note that $n\geq 3$ is the stable range for maps in $\pi_{n+1}\big((\bigvee_{i=1}^lS^n)\vee P^{n+1}(T_2)\big)$. Hence $\bar{\varphi}$ contains no Whitehead products
and can be represented by a matrix $\smatwo{A}{B}$, where $A$ is an $l\times l$ matrix representing the component 
 \[\bar{\varphi}_A\colon \bigvee_{i=1}^lS^{n+1}\xra{\bar{\varphi}} \big(\bigvee_{i=1}^lS^n\big)\vee P^{n+1}(T_2)\to \bigvee_{i=1}^lS^n,\]
 and $B$ is an $l\times t_2$ matrix representing the component 
 \[\bar{\varphi}_B\colon \bigvee_{i=1}^lS^{n+1}\xra{\bar{\varphi}} \big(\bigvee_{i=1}^lS^n\big)\vee P^{n+1}(T_2)\to P^{n+1}(T_2)\simeq \bigvee_{i=1}^{t_2}P^{n+1}(2^{r_i}).\]
 By Lemma \ref{lem:Moore1}, entries of the matrix $A$ are possibly $0,\eta$ and entries of $B$ are possible $0$ or $i_n\eta$.

 If $\Sq^2(H^n(C_\varphi);\z{})=0$, then $\Sq^2(H^n(C_{\bar{\varphi}});\z{})=0$. Lemma \ref{lem:Sq2} (\ref{Sq2-Changcpx}) then implies that $\smatwo{A}{B}$ is the zero matrix; that is, $\bar{\varphi}$ is null-homotopic, which proves the first homotopy equivalence. 

 If $\Sq^2(H^n(C_\varphi);\z{})\neq 0$, then $\Sq^2(H^n(C_{\bar{\varphi}});\z{})\neq 0$. By Lemma \ref{lem:Sq2} (\ref{Sq2-Changcpx}) we see that at least one entry of $\smatwo{A}{B}$ is nonzero.
 Observe that there are equivalences
 \begin{align*}
  (i)~&\matwo{\eta}{\eta}\sim \matwo{\eta}{0}\sim \matwo{0}{\eta}\colon S^{n+1}\to S^n\vee S^n,\\
  (ii)~&\matwo{\eta}{i_n\eta}\sim \matwo{\eta}{0}\colon S^{n+1}\to S^n\vee P^{n+1}(2^r),\\
  (iii)~&\matwo{i_n\eta}{i_n\eta}\sim \matwo{i_n\eta}{0}\colon S^{n+1}\to P^{n+1}(2^r)\vee P^{n+1}(2^s) \text{ for $r\geq s$}.
 \end{align*}
 By $(i)$ we firstly apply elementary transformations to the matrix $A$ to get a diagonal matrix $D$ with diagonal entries
 \[d_1=\cdots=d_{c_1}=\eta,~~d_{c_1+1}=\cdots=d_l=0.\] 
By $(ii)$ we then apply appropriate row transformations to $\smatwo{D}{B}$ to make the entries of $B$ below the nonzero $d_1=\cdots=d_{c_1}=\eta$ all zero. 
Denote the resulted matrix by $\smatwo{D}{C}$. The nonzero entries of $C$ only possibly appear in the last $l-c_1$ columns. Finally, by $(iii)$ we apply appropriate row transformations to $C$ to get a matrix $\smatwo{D}{E}$ such each column of $E$, and hence of $\smatwo{D}{E}$ contains at most one nonzero entry. 
 Let $c_2$ be the number of $i_n\eta$ in $E$, then the second homotopy equivalence in the Lemma follows.
\end{proof}
\end{lemma}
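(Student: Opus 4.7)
The plan is to exploit that for $n\geq 3$ the attaching map $\bar\varphi:=\bar\varphi_n$ lies in a stable range for its target, and then reduce its matrix representative to a normal form by row and column operations coming from self-equivalences of source and target. By Lemma \ref{lem:2n+1} it suffices to decompose $C_{\bar\varphi}$, where $\bar\varphi:\bigvee_{i=1}^l S^{n+1}\to (\bigvee_{i=1}^l S^n)\vee P^{n+1}(T_2)$. Since $n\geq 3$, the Hilton--Milnor splitting of $\pi_{n+1}$ of the target contains no Whitehead-product correction, so $\bar\varphi$ is completely determined by its components. By Lemma \ref{lem:Moore1} the only possible nonzero entries are $\eta$ landing in an $S^n$ summand or $i_n\eta$ landing in a $P^{n+1}(2^{r_i})$ summand, so $\bar\varphi$ is encoded by a block matrix $\smatwo{A}{B}$ with $A\in\{0,\eta\}^{l\times l}$ and the $i$-th row of $B$ in $\{0,i_n\eta\}$ with target $P^{n+1}(2^{r_i})$.

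If $\Sq^2$ vanishes on $H^n(C_{\varphi_n};\z{})$, then a nonzero entry of $\smatwo{A}{B}$ would produce, after projecting away the other coordinates of source and target, a two-cell retract of the form $C^{n+2}_\eta$ or $C^{n+2}_{r_i}$, on which $\Sq^2$ acts nontrivially by Lemma \ref{lem:Sq2}(\ref{Sq2-Changcpx}), a contradiction. Hence $\bar\varphi\simeq\ast$, and combining with the already split $P^{n+1}(T_{\neq 2})$ summand yields the first equivalence.

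If instead $\Sq^2$ acts nontrivially, I would carry out the reduction via three basic equivalences: (i) $\smatwo{\eta}{\eta}\sim\smatwo{\eta}{0}$ within $A$; (ii) $\smatwo{\eta}{i_n\eta}\sim\smatwo{\eta}{0}$ across blocks, using that $i_n\eta$ factors through $\eta$; and (iii) for $r\geq s$, $\smatwo{i_n\eta}{i_n\eta}\sim\smatwo{i_n\eta}{0}$ within $B$, implemented via the comparison map $B(\chi^r_s)$ from Lemma \ref{lem:Moore1}(4). Using (i) together with column swaps I diagonalise $A$ with $c_1$ entries equal to $\eta$; (ii) then zeroes the columns of $B$ beneath those $\eta$'s; and (iii), applied to the remaining submatrix of $B$ after ordering the torsion rows by decreasing $r_i$, produces a matrix in which each of the remaining $l-c_1$ columns carries at most one $i_n\eta$ entry. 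Setting $c_2$ equal to the number of such surviving entries and reading off the homotopy cofibre of each column using the cofibration sequences defining $C^{n+2}_\eta$ and $C^{n+2}_{r_j}$ yields the claimed wedge decomposition.

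The main obstacle is the bookkeeping in step (iii): the row operations used to eliminate $i_n\eta$ in the Moore-space rows must be implemented by genuine self-homotopy-equivalences of the target wedge and must not reintroduce nonzero entries in previously cleaned blocks. Processing the rows in order of decreasing $r_i$, and keeping track of which combinations of $B(\chi^r_s)$ are invertible, handles this. A small boundary case when $l=1$ (in which $c_1$ and $c_2$ cannot both be positive, since the single column of $A$ can carry at most one kind of nontriviality) must also be verified separately to match the statement.
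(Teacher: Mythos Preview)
Your proposal is correct and follows essentially the same route as the paper's proof: stable-range matrix description of $\bar\varphi$, the $\Sq^2$-argument via Lemma \ref{lem:Sq2}(\ref{Sq2-Changcpx}) for the trivial case, and the same three elementary equivalences (i)--(iii) applied in the same order (diagonalise $A$, clear $B$ under the $\eta$'s, then reduce $B$ columnwise). Your additional remarks on implementing (iii) via $B(\chi^r_s)$ and on the $l=1$ boundary case are accurate and make explicit points the paper leaves implicit.
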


Note that we cannot determine the indices $r_1,\cdots,r_{c_2}$ among $r_1,\cdots,r_{t_2}$. 
Summarizing from Lemma \ref{lem:2n+1} and \ref{lem:2n+1:n>2} we get the following.
\begin{proposition}\label{prop:2n+1}
  Let $M$ be an $(n-1)$-connected $(2n+2)$-manifold with $H_\ast(M)$ given by (\ref{HM}), $n\geq 2$. Identify $\Sigma M_5$ with $M_7$. 
  There is a homotopy equivalence 
 \begin{multline*}
   M_{2n+1}\simeq \big(\bigvee_{i=1}^dS^{n+1}\big)\vee P^{n+2}(T)\vee \big(\bigvee_{i=1}^{l-c_1} S^{n}\big)\vee \big(\bigvee_{i=1}^{l-c_1-c_2} S^{n+2}\big)\vee P^{n+1}(T_{\neq 2})\\\vee\big(\bigvee_{i=1}^{c_1} C^{n+2}_\eta\big)\vee \big(\bigvee_{j=c_2+1}^{t_2}P^{n+1}(2^{r_j})\big)\vee\big(\bigvee_{j=1}^{c_2} C^{n+2}_{r_j}\big), 
 \end{multline*}
 where $0\leq c_1\leq l,0\leq c_2\leq t_2$ and $c_1+c_2\leq l$. Moreover, 
 \begin{enumerate}
  \item $c_1=c_2=0$ if and only if $\Sq^2$ acts trivially on $H^n(M;\z{})$.
  \item Suppose $l=1$ and $\Sq^2$ acts non-trivially on $H^n(M;\z{})$ (Replace $H^n(M;\z{})$ by $H^3(\Sigma M;\z{})$ for $n=2$). If for any $x\in H^n(M;\z{})$ with $\Sq^2(x)\neq 0$ and any $y\in \ker(\Sq^2)$, we have \(\beta_r(x+y)= 0\text{ for any $r\geq 1$},\) 
  then $(c_1,c_2)=(1,0)$; otherwise $(c_1,c_2)=(0,1)$.    
 \end{enumerate}
 
\end{proposition}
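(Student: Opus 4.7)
The plan is to package together what Lemmas \ref{lem:2n+1} and \ref{lem:2n+1:n>2} already produce; the proposition is essentially an administrative summary, augmented by a cohomological translation of the criteria that separate the cases. First I would combine the decomposition
\[M_{2n+1}\simeq \big(\bigvee_{i=1}^dS^{n+1}\big)\vee P^{n+2}(T)\vee P^{n+1}(T_{\neq 2})\vee C_{\bar{\varphi}_n}\]
from Lemma \ref{lem:2n+1} with the two-case splitting of $C_{\bar{\varphi}_n}$ provided by Lemma \ref{lem:2n+1:n>2} to obtain the stated wedge decomposition with the correct ranges on $c_1,c_2$. The $n=2$ case is handled by the identification $\Sigma M_5\simeq M_7$ together with $\Sigma C_{\varphi_2}\simeq C_{\varphi_3}$ from Lemma \ref{lem:2n+1}, reducing it to the $n=3$ computation.

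For statement (1), the equivalence \emph{$c_1=c_2=0$ iff $\Sq^2\equiv 0$ on $H^n(M;\z{})$} reduces to the same statement for $M_{2n+1}$: the skeletal inclusion $M_{2n+1}\hookrightarrow M$ induces isomorphisms on $H^n(-;\z{})$ and $H^{n+2}(-;\z{})$ (every cell of $M$ in those dimensions already lies in $M_{2n+1}$), and naturality transports the $\Sq^2$-action. Among the wedge summands of $M_{2n+1}$, the only ones contributing simultaneously to $H^n$ and $H^{n+2}$ are the Chang complexes $C^{n+2}_\eta$ and $C^{n+2}_{r_j}$, on each of which $\Sq^2$ is an isomorphism by Lemma \ref{lem:Sq2}(\ref{Sq2-Changcpx}). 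Hence $\Sq^2\equiv 0$ on $H^n(M;\z{})$ exactly when no Chang summand appears, which is precisely the vanishing case $\bar{\varphi}_n\simeq\ast$ of Lemma \ref{lem:2n+1:n>2}.

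For statement (2), assume $l=1$ and $\Sq^2$ nontrivial; then $c_1+c_2\le l=1$ combined with (1) forces $c_1+c_2=1$, so exactly one of $(c_1,c_2)$ is $(1,0)$ or $(0,1)$. In the matrix $\smatwo{A}{B}$ from the proof of Lemma \ref{lem:2n+1:n>2}, with $l=1$ one has $A=[\alpha]$ ($\alpha\in\{0,\eta\}$) and $B$ a single row with entries in $\{0,i_n\eta\}$: if $\alpha=\eta$ then reduction rule (ii) clears $B$, producing a $C^{n+2}_\eta$ summand and $(c_1,c_2)=(1,0)$; if $\alpha=0$ then rule (iii) collapses $B$ to one nonzero entry $b_{j_0}=i_n\eta$, producing a $C^{n+2}_{r_{j_0}}$ summand and $(c_1,c_2)=(0,1)$. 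To read off which case occurs from cohomology, I would track the (unique modulo $\ker(\Sq^2)$) class $x\in H^n(M;\z{})$ with $\Sq^2(x)\neq 0$: in Case $(1,0)$ its representative is the mod-$2$ reduction of an integrally free generator (the $n$-cell of $C^{n+2}_\eta$), which can be corrected by a suitable $y\in\ker(\Sq^2)$ drawn from the Moore summands so that $x+y$ is annihilated by every higher Bockstein $\beta_r$; in Case $(0,1)$ the representative is supported on the Moore subcomplex of $C^{n+2}_{r_{j_0}}$, carries essential $2^{r_{j_0}}$-torsion, and no $y\in\ker(\Sq^2)$ drawn from the remaining (disjoint) Moore summands can cancel $\beta_{r_{j_0}}(x)$. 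This dichotomy is exactly the Bockstein criterion in the statement.

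The main obstacle I foresee is the cohomological bookkeeping in part (2): one must verify carefully that the Bockstein behaviour on the Chang complexes $C^{n+2}_\eta$ and $C^{n+2}_{r_{j_0}}$ is faithfully reflected in the global $H^{\ast}(M;\z{})$ and that the criterion is invariant under the freedom to replace $y$ by any other representative of $\ker(\Sq^2)$; everything else is straightforward assembly on top of Lemmas \ref{lem:2n+1} and \ref{lem:2n+1:n>2}.
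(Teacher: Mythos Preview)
Your proposal is correct and follows exactly the paper's approach: the paper introduces Proposition~\ref{prop:2n+1} with the sentence ``Summarizing from Lemma~\ref{lem:2n+1} and~\ref{lem:2n+1:n>2} we get the following'' and gives no further proof, so the assembly you describe is precisely what is intended. Your added cohomological justification of parts (1) and (2) via Lemma~\ref{lem:Sq2}(\ref{Sq2-Changcpx}) and the Bockstein behaviour on $C^{n+2}_\eta$ versus $C^{n+2}_{r_{j_0}}$ supplies detail the paper leaves implicit; the only point to watch is that the quantifier in (2) should be read as ``for every such $x$ there exists $y\in\ker(\Sq^2)$ with $\beta_r(x+y)=0$'', which is the interpretation your argument actually establishes.
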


The following is immediate.
\begin{corollary}\label{cor:2n+1-local}
  Under the same assumptions in Proposition \ref{prop:2n+1}, there is a homotopy equivalence 
 \begin{multline*}
   M_{2n+1}\simeq_{\loca} \big(\bigvee_{i=1}^dS^{n+1}\big)\vee \big(\bigvee_{i=1}^{l} (S^{n}\vee S^{n+2})\big)\vee P^{n+1}(T)\vee P^{n+2}(T).
 \end{multline*}
\end{corollary}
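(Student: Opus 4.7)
The plan is to start from the homotopy equivalence in Proposition \ref{prop:2n+1} and simplify each wedge summand after inverting $2$. Two observations do the work: first, $\eta\in\pi_{n+1}(S^n)$ has order a power of $2$, so $\eta\simeq_{\loca} \ast$; second, every Moore space $P^{m}(2^{r_j})$ has $2$-primary homology and hence becomes contractible after localization away from $2$. These imply the desired simplifications of the ``exotic'' summands in the decomposition of $M_{2n+1}$.

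More precisely, from the defining cofibrations of the elementary Chang complexes, after localizing away from $2$ we obtain
\[
C^{n+2}_\eta = S^n\cup_\eta e^{n+2} \simeq_{\loca} S^n\vee S^{n+2},\qquad
C^{n+2}_{r_j} = P^{n+1}(2^{r_j})\cup_{i_n\eta} e^{n+2} \simeq_{\loca} S^{n+2},
\]
since $P^{n+1}(2^{r_j})\simeq_{\loca}\ast$ and the attaching maps trivialize. Similarly, each factor $P^{n+1}(2^{r_j})$ appearing on its own in the wedge contributes nothing away from $2$, and $P^{n+1}(T)\simeq_{\loca} P^{n+1}(T_{\neq 2})$ (and likewise in degree $n+2$); but since we are writing everything in terms of $T$ up to localization, we may equivalently write $P^{n+1}(T)$ and $P^{n+2}(T)$ in the final expression.

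Finally, I would assemble the summands by counting. The $S^{n}$-summands total $(l-c_1)+c_1=l$, the $S^{n+2}$-summands total $(l-c_1-c_2)+c_1+c_2=l$, the $S^{n+1}$-summands contribute $d$, and the Moore spaces collapse to $P^{n+1}(T)\vee P^{n+2}(T)$ (after the identification above). Substituting back into the formula of Proposition \ref{prop:2n+1} yields
\[
M_{2n+1}\simeq_{\loca} \big(\bigvee_{i=1}^d S^{n+1}\big)\vee\big(\bigvee_{i=1}^l (S^n\vee S^{n+2})\big)\vee P^{n+1}(T)\vee P^{n+2}(T),
\]
as claimed. There is no genuine obstacle here; the only point requiring any care is to verify that the integers $c_1$ and $c_2$ cancel out exactly when the $S^n$ and $S^{n+2}$ contributions from the Chang pieces are added back in, which is the short arithmetic check above.
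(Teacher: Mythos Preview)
Your argument is correct and is exactly what the paper has in mind: the corollary is stated with no proof beyond ``The following is immediate,'' and your derivation from Proposition \ref{prop:2n+1} via the $2$-local contractibility of $P^{n+1}(2^{r_j})$ and $\eta$, together with the arithmetic count of sphere summands, is precisely that immediate deduction.
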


As a byproduct, we note that Lemma \ref{lem:2n+1} and Lemma \ref{lem:2n+1:n>2} (or Proposition \ref{prop:2n+1}) help to understand the homotopy type of the loop space $\Omega M$ more clearly characterized by Chenery \cite{Chen23} or Theriault \cite{Th20}. The proof of the following Corollary is similar to that of \cite[Theorem 5.1]{Chen23}. 
\begin{corollary}\label{cor:Chen}
  Let $M$ be an $(n-1)$-connected $(2n+2)$-dimensional \PD ~complex whose integral homology is given by (\ref{HM}) with  $d>1$, $n=2$, or $n\geq 4, n\neq 7$. 
  Then there is a homotopy equivalence 
  \begin{align*}
    \Omega M&\simeq \Omega(S^{n+1}\times S^{n+1})\times \Omega  \big(\Omega(S^{n+1}\times S^{n+1})\ltimes X\big), 
  \end{align*}
where $X=\big(\bigvee_{i=1}^{d-2}S^{n+1}\big)\vee P^{n+2}(T)\vee C_{\varphi_n}$ is given by Lemma \ref{lem:2n+1} for $n=2$ and by Lemma \ref{lem:2n+1:n>2} for $n\geq 4, n\neq 7$.
\end{corollary}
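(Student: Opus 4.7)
The strategy mirrors \cite[Theorem 5.1]{Chen23}. By $(n-1)$-connectedness and \PD, $M$ fits into a top-cell homotopy cofibration
\[
S^{2n+1}\xra{h} M_{2n+1}\to M.
\]
The hypothesis $d\geq 2$ combined with Proposition \ref{prop:2n+1} (and Lemma \ref{lem:2n+1} when $n=2$, Lemma \ref{lem:2n+1:n>2} when $n\geq 4$, $n\neq 7$) allows us to split off two sphere summands and write
\[
M_{2n+1}\simeq (S^{n+1}\vee S^{n+1})\vee X,
\]
where $X=\big(\bigvee_{i=1}^{d-2}S^{n+1}\big)\vee P^{n+2}(T)\vee C_{\varphi_n}$ is the complementary wedge.

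The main step is to analyze the attaching map $h$. \PD~makes the intersection form on $H_{n+1}(M)\cong \Z^d$ non-degenerate, and after a change of basis the two distinguished $S^{n+1}$-summands can be arranged to form a hyperbolic pair. Consequently the Hilton--Milnor expansion of $h$ must contain the basic Whitehead product $[\iota_1,\iota_2]$ of these factors with coefficient $\pm 1$, since $[\iota_1,\iota_2]$ is precisely the attaching map of the top cell of $S^{n+1}\times S^{n+1}$. The remaining components of $h$ land in $\pi_{2n+1}$ of the individual wedge summands and in iterated Whitehead products involving $X$. The numerical restriction $n=2$ or $n\geq 4$, $n\neq 7$ is exactly what is needed to absorb every such spherical contribution into $X$ by an automorphism of the wedge; the values $n=3,7$ are excluded because the Hopf-invariant-one elements $\nu\in\pi_7(S^4)$ and $\sigma\in\pi_{15}(S^8)$ in $\pi_{2n+1}(S^{n+1})$ produce obstructions that cannot be similarly cancelled. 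After this clean-up we reach a decomposition
\[
h\simeq [\iota_1,\iota_2]+j\circ h'\colon S^{2n+1}\to M_{2n+1},
\]
with $h'\colon S^{2n+1}\to X$ and $j\colon X\hookrightarrow M_{2n+1}$ the inclusion.

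Collapsing $X$ then exhibits $M$ as the total space of a homotopy cofibration $X\to M\to S^{n+1}\times S^{n+1}$, and the loop space splitting machinery used in \cite{Th20,Chen23} (of Beben--Theriault type) applies to yield
\[
\Omega M\simeq \Omega(S^{n+1}\times S^{n+1})\times \Omega\big(\Omega(S^{n+1}\times S^{n+1})\ltimes X\big).
\]
The crux of the argument is the clean-up of $h$: it requires a careful case analysis of $\pi_{2n+1}$ for every elementary summand appearing in Proposition \ref{prop:2n+1} and explains the exclusion of $n=3,7$. Once $h$ is in the stated form, the product decomposition of $\Omega M$ follows almost formally from the splitting theorem.
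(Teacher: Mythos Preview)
Your outline follows the same strategy the paper invokes (that of \cite[Theorem 5.1]{Chen23}), so the overall approach matches. Two points need attention.

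First, ``hyperbolic pair'' is inaccurate for odd $n$: the cup-product form on $H^{n+1}(M)/\mathrm{tors}$ is then symmetric and may be definite. What unimodularity (from \PD) does give is a pair $x_1,x_2$ with $x_1\cup x_2=\pm 1$, hence a unit coefficient on $[\iota_1,\iota_2]$; that weaker statement is what the argument actually uses.

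Second---and this is the real gap---the assertion that one can ``absorb every spherical contribution into $X$ by an automorphism of the wedge'' is not justified. Even after fixing the $[\iota_1,\iota_2]$-coefficient, the components $p_i\circ h\in\pi_{2n+1}(S^{n+1})$ need not vanish (e.g.\ $\pi_5(S^3)\cong\Z/2\langle\eta^2\rangle$ for $n=2$, $\pi_{11}(S^6)\cong\Z$ for $n=5$), and when $d=2$ the wedge complement $X$ contains no $S^{n+1}$ summand into which they could be moved. You correctly flag that Hopf-invariant-one elements obstruct $n=3,7$, but you have not given the positive argument for the remaining $n$: explaining \emph{why} those components can be cancelled (or why the splitting holds without cancelling them) is precisely the content one must import from \cite{Chen23} or from the inertness results of \cite{Th20}. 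The normal form $h\simeq[\iota_1,\iota_2]+jh'$ you assert is stronger than what the loop-space splitting requires and does not follow from what you have written.
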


\section{Proof of Theorem \ref{thm:6mfds}}\label{sect:n=2}
Let $M$ be a closed simply-connected $6$-manifold. By Lemma \ref{lem:2n+1}, there is a homotopy cofibration 
\[S^5\xra{h}M_5\simeq \big(\bigvee_{i=1}^dS^3\big)\vee P^4(T)\vee C_{\varphi_2}\to M,\]
where $\varphi_2\colon \bigvee_{i=1}^lS^3\to \big(\bigvee_{i=1}^lS^2\big)\vee P^3(T)$.
We may set 
\[h=\sum_{i=1}^dh_i^S+\sum_{j=1}^t+h_{\varphi_2}+\theta,\] 
where $h_i^S,  h_j^P$ and $h_{\varphi_2}$ are the compositions of $h$ with the canonical projections onto $S^3_i=S^3$, $P^4(p_i^{r_i})$ and $C_{\varphi_2}$, respectively; $\theta$ is a sum of all possible Whitehead products, $\Sigma \theta=0$.
Thus after suspension we get a homotopy cofibration 
\begin{equation*}
  S^6\xra{\bar{h}} \big(\bigvee_{i=1}^dS^4\big)\vee \big(\bigvee_{i=1}^tP^5(p_i^{r_i})\big)\vee \Sigma C_{\varphi_2}\to \Sigma M,
\end{equation*}
where $\bar{h}=\sum_{i=1}^d\Sigma h_i^S+\sum_{j=1}^t \Sigma h_j^P+\Sigma h_{\varphi_2}$. 
For our topological $6$-manifold $M$, the generalized splitting theorem of Wall \cite{Wall66} or Jupp \cite{Jupp73} implies a homotopy equivalence  (cf. \cite[Corollary 2.2]{Huang-arxiv}) 
\begin{equation}\label{eq:Wall}
  \Sigma M\simeq \big(\bigvee_{i=1}^dS^4\big)\vee \Sigma M'.
\end{equation} 
Denote $T_2[c_2]=\bigoplus_{i=c_2+1}^{t_2}\z{r_i},~~T[c_2]=T_2[c_2] \oplus T_{\neq 2}.$
By Proposition \ref{prop:2n+1} with $n=2$, $\Sigma M'$ is the homotopy cofibre of a map 
\begin{equation}\label{def:hbar}
  \begin{multlined}
  S^6\xra{\bar{h}'} \Sigma M_5'\xra[\simeq]{e}P^5(T)\vee \big(\bigvee_{i=1}^{l-c_1} S^{3}\big) \vee \big(\bigvee_{i=1}^{l-c_1-c_2} S^{5}\big)
  \vee\big(\bigvee_{i=1}^{c_1} C^{5}_\eta\big)\\\vee P^4(T[c_2])\vee \big(\bigvee_{j=1}^{c_2} C^{5}_{r_j}\big)=:\Sigma W'.
\end{multlined}
\end{equation}

\begin{lemma}\label{lem:n=2:susp}
 Let $Z$ be an element or a wedge sum of any two elements of $\{S^2,S^4,C^4_\eta,C^4_{r_j},P^3(2^{r_i})\}$ with $\z{r_i}$ direct summands of $T_2[c_2]$. Let $p'_{Z}\colon W'\to Z$ be the canonical projection onto $Z$. Then all the compositions
 \[S^6\xra{\Sigma \bar{h}'} \Sigma M_5'\xra{e} \Sigma W' \xra{\Sigma p'_Z} \Sigma Z\]
are suspensions of certain maps.
 \begin{proof}
Write $\Sigma W'=P^5(T)\vee P^4(T_{\neq 2})\vee \Sigma X$ with 
\[X=\big(\bigvee_{i=1}^{l-c_1} S^{2}\big) \vee \big(\bigvee_{i=1}^{l-c_1-c_2} S^{4}\big)\vee\big(\bigvee_{i=1}^{c_1} C^{4}_\eta\big)\vee \big(\bigvee_{i=c_2+1}^{t_2}P^3(2^{r_i})\big)\vee \big(\bigvee_{j=1}^{c_2} C^{4}_{r_j}\big).\]
Clearly the projection $p=(\Sigma p'_Z)\circ e\colon \Sigma M_5'\to\Sigma Z$ factors through $\Sigma X$. Let $H$ be the second James-Hopf invariant, then $H(\Sigma p'_Z)=0$. Consider the following commutative diagram 
\[\begin{tikzcd}
  {[M_5',Z]}\ar[r,"E"]&{[\Sigma M_5',\Sigma Z]}\ar[r,"H"]&{[\Sigma M_5',\Sigma Z\wedge Z]}\\
  &{[\Sigma W',\Sigma Z]}\ar[u,"e^\sharp","\cong"swap]\ar[r,"H"]&{[\Sigma W',\Sigma Z\wedge Z]}\ar[u,"e^\sharp","\cong"swap]
\end{tikzcd},\]
 where $E(\alpha)=\Sigma \alpha$ is the suspension homomorphism. 
We have 
\[H(p)=H\circ e^\sharp(\Sigma p')=e^\sharp H(\Sigma p')=0.\]
Since $M_5'$ has dimension $4$, the top row is the generalized \emph{EHP} exact sequence (cf. \cite[Section 4.K]{hatcherbook}). 
Then it follows by exactness that $p$ is the suspension of some map $M_5'\to Z$. 
Thus the composition $p\circ \Sigma \bar{h}'$ is a suspension.
 \end{proof}
\end{lemma}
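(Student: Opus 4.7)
The plan is to apply the second James-Hopf invariant $H$ together with the generalized EHP exact sequence to show that the projection $p := (\Sigma p'_Z) \circ e \colon \Sigma M'_5 \to \Sigma Z$ is itself a suspension; the lemma's claim about the full composition then follows.

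First, I would decompose $\Sigma W' = P^5(T) \vee P^4(T_{\neq 2}) \vee \Sigma X$, where $X$ is the wedge of the $4$-dimensional pre-suspensions of the elementary complexes $S^2, S^4, C^4_\eta, C^4_{r_j}, P^3(2^{r_i})$ listed in the statement. Since $Z$ is a wedge of at most two such summands, the projection $p'_Z \colon W' \to Z$ is defined at the unsuspended level, so $\Sigma p'_Z$ is tautologically a suspension, and $H(\Sigma p'_Z) = 0$.

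Second, by naturality of $H$ through the homotopy equivalence $e$ (via the commuting square pairing $e^\sharp$ with $H$), one obtains $H(p) = e^\sharp H(\Sigma p'_Z) = 0$. The source $M'_5$ has dimension at most $4$ (by Lemma \ref{lem:n+1} combined with Lemma \ref{lem:2n+1:n>2} applied with $n = 2$), placing us in the range where the generalized EHP exact sequence
\[
[M'_5, Z] \xra{E} [\Sigma M'_5, \Sigma Z] \xra{H} [\Sigma M'_5, \Sigma Z \wedge Z]
\]
is exact (cf.\ Hatcher, Section 4.K). Exactness at the middle term forces $p = \Sigma p_0$ for some $p_0 \colon M'_5 \to Z$; the composite $p \circ \bar h' = \Sigma p_0 \circ \bar h'$ then inherits the desired suspension structure.

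The main obstacle I foresee is verifying the metastable-range hypothesis for EHP. With $Z$ of connectivity as low as $1$ (e.g.\ $Z = S^2$ or $Z = P^3(2^{r_i})$) and $\dim M'_5 = 4$, we sit at the edge of the range in which the generalized EHP sequence is exact as a sequence of pointed sets, so care is needed to confirm exactness at the relevant term, possibly by exploiting the explicit low-dimensional structure of $M'_5$ as a finite wedge of Moore spaces and Chang complexes rather than appealing to the general statement. A secondary point is that $\bar h'$ itself is not a priori a suspension, so the suspension conclusion for the full composition relies entirely on the desuspension of $p$ and on interpreting ``suspension'' accordingly.
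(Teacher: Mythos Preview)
Your proposal follows the paper's proof essentially line for line: decompose $\Sigma W'$, observe that $\Sigma p'_Z$ is already a suspension so $H(\Sigma p'_Z)=0$, transport this through the equivalence $e$ to get $H(p)=0$, and then invoke the generalized EHP exact sequence (valid because $\dim M_5'\leq 4$) to conclude that $p$ desuspends. Your metastable-range concern is not actually an obstacle: with $Z$ at least $1$-connected, $Z\wedge Z$ is at least $3$-connected and the relevant EHP sequence is exact through dimension $4$, which is exactly $\dim M_5'$; the paper simply asserts this by citing Hatcher without further comment. Your secondary worry about whether $\bar h'$ itself desuspends is a notational artifact of the paper (compare the $\bar h'$ of (\ref{def:hbar}) with the $\Sigma\bar h'$ in the lemma statement); as the lemma is stated, the domain map is written as a suspension, so $p\circ\Sigma\bar h'=\Sigma(p_0\circ\bar h')$ is immediate once $p=\Sigma p_0$.
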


In \cite{CS22} Cutler and So showed that $\hbar$ contains trivial Whitehead product component $W$ if $H_\ast(M)$ contains no $2$-torsions; i.e., $T_2=0$. When $T_2\neq 0$, after once-suspension the possible Whitehead products \[ \theta_{\varphi}\colon S^6\to \Sigma C_{\varphi}\]
lie in the groups $\pi_6(\Sigma Y_1\vee \Sigma Y_2)$, where 
\(Y_1,Y_2\in \{S^2,S^4,P^3(2^{r_i}),C^4_\eta,C^4_{r_j}\}.\)  
 
\begin{proposition}\label{prop:n=2:key}
  The attaching map $\hbar=e\circ \Sigma \bar{h}'\colon S^6\to \Sigma W'$ in (\ref{def:hbar}) desuspends and contains no Whitehead product components.
  \begin{proof}
 By the work of \cite{CS22}, it suffices to consider the case $T=T_2$.  
    Note that the second James-Hopf invariant $H$ satisfies $H\circ E=0$, where $E(\alpha)=\Sigma \alpha$.
It follows that if there exist elements in $\pi_6(\Sigma Y_1\vee \Sigma Y_2)$ satisfying $H(\alpha)\neq 0$, then $\alpha\neq \Sigma \alpha'$, and hence $\alpha$ is not a component of $\theta_\varphi$. Then the Proposition follows by Lemma \ref{lem:htpgrps-smash}, \ref{lem:HW} and \ref{lem:n=2:susp}.
  \end{proof}
\end{proposition}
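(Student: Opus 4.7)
The plan is to use the second James--Hopf invariant $H$, together with the identity $H\circ E = 0$ (where $E$ denotes suspension), to force both the desuspension and the absence of Whitehead products in $\hbar$. By the Hilton--Milnor theorem applied to the wedge of suspensions $\Sigma W'$, any element $\hbar \in \pi_6(\Sigma W')$ admits a unique decomposition
$$\hbar \;=\; \sum_i \iota_i \circ \hbar_i \;+\; \theta_\varphi,$$
where $\iota_i \colon \Sigma Z_i \hookrightarrow \Sigma W'$ is the inclusion of the $i$-th suspended summand with $Z_i \in \{S^2, S^4, C^4_\eta, C^4_{r_j}, P^3(2^{r_i})\}$, $\hbar_i = p_i \circ \hbar$, and $\theta_\varphi$ collects the basic-product (Whitehead) corrections. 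Since each $\Sigma Z_i$ is at least $2$-connected, basic products of weight $\geq 3$ contribute to $\pi_k(\Sigma W')$ only for $k \geq 7$; hence $\theta_\varphi$ is a sum of weight-two components $\theta^{(i,j)}_\varphi \in \pi_6(\Sigma Z_i \vee \Sigma Z_j)$.

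For each pair $(Z_i, Z_j)$, let $p^{(i,j)} \colon \Sigma W' \to \Sigma Z_i \vee \Sigma Z_j$ be the canonical projection and set $\hbar^{(i,j)} = p^{(i,j)} \circ \hbar$. By Lemma \ref{lem:n=2:susp}, the map $\hbar^{(i,j)}$ is the suspension of some map $S^5 \to Z_i \vee Z_j$; the identity $H \circ E = 0$ then gives $H(\hbar^{(i,j)}) = 0$. Next I would invoke the forthcoming Lemmas \ref{lem:htpgrps-smash} and \ref{lem:HW} of Section \ref{sect:JH}, which compute the relevant James--Hopf target groups associated to $\pi_6(\Sigma(Z_i \wedge Z_j))$ and show that $H$ is injective on the subgroup of $\pi_6(\Sigma Z_i \vee \Sigma Z_j)$ spanned by the two-fold Whitehead products $[\iota_{\Sigma Z_i}, \iota_{\Sigma Z_j}]$ and their precompositions with the attaching maps of cells in $\Sigma Z_i$ and $\Sigma Z_j$. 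Combining these two facts forces $\theta^{(i,j)}_\varphi = 0$ for every pair, hence $\theta_\varphi = 0$, so $\hbar$ has no Whitehead product components. The same James--Hopf vanishing, together with the triviality of higher basic products in the range $\pi_6$, implies that $\hbar$ itself lies in the image of $E$, i.e.\ desuspends.

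The main obstacle will be establishing the injectivity of $H$ on the Whitehead product subgroup in Lemma \ref{lem:HW}, particularly when at least one of $Z_i, Z_j$ is a Chang complex $C^4_\eta$ or $C^4_{r_j}$. The smash products $\Sigma(Z_i \wedge Z_j)$ carry substantial $2$-primary torsion, and one has to enumerate all pair types, compute $\pi_6(\Sigma(Z_i \wedge Z_j))$ generator-by-generator using the preliminary $2$-local computations of Section \ref{sect:prelim}, and verify by hand that each Whitehead product maps to a nonzero class under $H$. This is a case analysis of some length, but is conceptually routine once the right generators are identified; it is exactly what Section \ref{sect:JH} is set up to accomplish.
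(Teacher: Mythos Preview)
Your proposal is correct and follows essentially the same route as the paper: project $\hbar$ onto pairs of summands, use Lemma~\ref{lem:n=2:susp} to see each projection is a suspension (hence killed by $H$), and then invoke Lemmas~\ref{lem:htpgrps-smash} and~\ref{lem:HW} to conclude the Whitehead components vanish. The one point you omit is the paper's first line: the reduction to $T=T_2$ via \cite{CS22}, which is why your list of $Z_i$'s (and Lemma~\ref{lem:n=2:susp} itself) contains only $2$-primary Moore and Chang pieces and no odd-primary summands; you should state this reduction explicitly, and also note that the $P^5(T_2)$ summands of $\Sigma W'$ contribute no Whitehead products in $\pi_6$ by connectivity (or by the diagonal form of the equivalence $e$), since they are absent from the list in Lemma~\ref{lem:n=2:susp}.
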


Due to \cite{CS22}, $P^5(T_{\neq 2})$ and $P^4(T_{\neq 2})$ split off $\Sigma M'$.
By the group structures and generators in Lemma \ref{lem:Moore1}, \ref{lem:Moore-n+3} and \ref{lem:htpgrps-Chang-dim6}, we observe that $\pi_6(C^5_\eta)$ contains no suspended elements and the suspended generators of $\pi_6(P^5(2^s))$, $\pi_6(S^3)$, $\pi_6(P^4(2^r))$,  $\pi_6(C^5_r)$ are respectively
\[\tilde{\eta}_r,~i_4\eta^2;\quad 6\nu'=\eta^3;\quad \tilde{\eta}_r\eta;\quad i_P\tilde{\eta}_r\eta.\] 
By Proposition \ref{prop:n=2:key} and Lemma \ref{lem:HL}, we then get that 
\[\Sigma M'\simeq P^5(T_{\neq 2})\vee P^4(T_{\neq 2})\vee \big(\bigvee_{i=1}^{c_1}C^5_\eta\big)\vee C_{\hbar},\] 
where $C_{\hbar}$ is the homotopy cofibre of a map 
\[\hbar\colon S^6\to  P^5(T_2)\vee\big(\bigvee_{i=1}^{l-c_1} S^{3}\big)\vee \big(\bigvee_{i=1}^{l-c_1-c_2} S^{5}\big)\vee P^4(T_2[c_2])\vee \big(\bigvee_{j=1}^{c_2}  C^{5}_{r_j}\big).\]
Since $2\tilde{\eta}_1=i_4\eta^2$ (\ref{eq:eta}), we may put 
\begin{equation}\label{eq:hbar}
  \small  \hbar= \sum_{i=1}^{t_2} (x_i\cdot \tilde{\eta}_{r_i}\!+\!\varepsilon_i\cdot i_4\eta^2)+\sum_{i=1}^{c_1}y_i\cdot \eta^3+\sum_{i=1}^{l-c_1-c_2}z_i\cdot \eta
    +\sum_{j=c_2+1}^{t_2}s_j\cdot \tilde{\eta}_{r_{j}}\eta+\sum_{j=1}^{c_2}  t_j \cdot i_P\tilde{\eta}_{r_j}\eta,
\end{equation}
where all the coefficients belong to $\{0,1\}$, and $c_1=c_2=0$ if and only if the Steenrod square $\Sq^2$ acts trivially on $H^2(M;\z{})$.
We discuss the homotopy types of $C_{\hbar}$ and $\Sigma M'$ by the following two propositions.  In their proofs we shall frequently use the phrase ``we may assume that'' to mean that the original situations (or maps) is equivalent to the latter ones by doing suitable matrix operations, and their mapping cones are homotopy equivalent.

\begin{proposition}\label{prop:SM'-Sq2eq0}
  Suppose that $\Sq^2$ acts trivially on $H^4(M;\z{})$.  
   \begin{enumerate}[1.]
    \item\label{Sq2eq0-1} If the secondary operation $\Theta$ acts trivially on $H^3(M;\z{})$, then  
    \begin{multline*}
      \Sigma M'\simeq  P^5(T)\vee \big(\bigvee_{i=2}^{l-c_1} S^{3}\big) 
      \vee \big(\bigvee_{i=1}^{l-c_1-c_2} S^{5}\big)\vee\big(\bigvee_{i=1}^{c_1} C^{5}_\eta\big)\vee \big(\bigvee_{j=1}^{c_2}  C^{5}_{r_j}\big)\\ \vee P^4(T[c_2])\vee (S^3\cup_{\epsilon\cdot\eta^3}e^7),
     \end{multline*}
    where  $\epsilon\in\{0,1\}$, and $\epsilon=0$ if $\mathbb{T}\big(H^2(M;\z{})\big)=0$ or  $l=c_1=1$.
    \item \label{Sq2eq0-2} If $\Theta$ acts non-trivially on $H^3(M;\z{})$, then the homotopy type of $\Sigma M'$ can be distinguished as follows.
     \begin{enumerate}
      \item\label{Sq2eq0-2a} If for any $u,v\in H^4(\Sigma M;\z{})$ with $\Theta(u)\neq 0$, $\Theta(v)=0$, there holds $u+v\notin \im(\beta_s)\text{ for any } s\geq 1;$
      while there exist $u',v'\in H^4(\Sigma M;\z{})$ satisfying $\Theta(u')\neq 0,\Theta(v')=0$ and $ \beta_r(u'+v')\neq 0,$ then there is a homotopy equivalence 
      \begin{multline*}
        \Sigma M'\simeq  \big(\bigvee_{i=1}^{l-c_1} S^{3}\big) \vee \big(\bigvee_{i=1}^{l-c_1-c_2} S^{5}\big)
        \vee\big(\bigvee_{i=1}^{c_1} C^{5}_\eta\big)\vee P^4(T[c_2])\vee \big(\bigvee_{j=1}^{t_2}  C^{5}_{r_j}\big)\\ \vee P^5\big(\frac{T}{\z{r_{j_1}}}\big)\vee (P^5(2^{r_{j_1}})\cup_{i_4\eta^2}e^7),
       \end{multline*} 
where $r_{j_1}$ is the maximum of $r_{j}$ such that $\beta_{r_j}(u'+v')\neq 0$. 
      \item\label{Sq2eq0-2b} If there exist $u,v\in H^4(\Sigma M;\z{})$ satisfying $\Theta(u)\neq 0$, $\Theta(v)=0$, $u+v\in\im(\beta_r)$ for some $r$,
      then we have 
      \begin{multline*}
        \Sigma M'\simeq P^5(T)\vee \big(\bigvee_{i=1}^{l-c_1} S^{3}\big) 
        \vee \big(\bigvee_{i=1}^{l-c_1-c_2} S^{5}\big)\vee\big(\bigvee_{i=1}^{c_1} C^{5}_\eta\big)\vee \big(\bigvee_{j=1}^{t_2}  C^{5}_{r_j}\big)\\
        \vee P^4\big(\frac{T[c_2]}{\z{r_{j_2}}}\big) \vee (P^4(2^{r_{j_2}})\cup_{\tilde{\eta}_{r_{j_2}}\eta}e^7), \text{ or }\\[1ex]
        \Sigma M'\simeq P^5(T)\vee \big(\bigvee_{i=1}^{l-c_1} S^{3}\big) \vee \big(\bigvee_{i=1}^{l-c_1-c_2} S^{5}\big)
        \vee\big(\bigvee_{i=1}^{c_1} C^{5}_\eta\big)\vee \big(\bigvee_{j_2\neq j=1}^{t_2}  C^{5}_{r_j}\big)\\ \vee P^4(T[c_2])\vee (C^5_{r_{j_2}}\cup_{i_P\tilde{\eta}_{r_{j_2}}\eta}e^7),
       \end{multline*} 
where $r_{j_2}$ is the minimum of $r_j$ such that $u+v\in \im(\beta_{r_j})$. The second homotopy equivalence is impossible if $l=c_1=1$.
     \end{enumerate}
   \end{enumerate}
\begin{proof}
Consider the equation (\ref{eq:hbar}) for $\hbar$. The assumption that $\Sq^2$ acts trivially on $H^5(\Sigma M;\z{})$ indicates $x_i=z_i=0$ for all $i$.
Thus we have 
\begin{equation}\label{hbar:Sq2eq0}
  \hbar= \sum_{i=1}^{t_2} \varepsilon_i\cdot i_4\eta^2+\sum_{i=1}^{c_1}y_i\cdot \eta^3+\sum_{j=c_2+1}^{t_2}s_j\cdot \tilde{\eta}_{r_j}\eta+\sum_{j=1}^{c_2}  t_j \cdot i_P\tilde{\eta}_{r_j}\eta,
\end{equation}
where all coefficients lie in $\Z/2$.

(1) By Lemma \ref{lem:Theta}, the condition that $\Theta\big(H^4(\Sigma M;\z{})\big)=0$ implies 
\[\varepsilon_i=s_j=t_j=0,~\forall~i,j.\]
If the tertiary operation $\mathbb{T}$ acts trivially on $H^3(\Sigma M;\z{})$, then $y_i=0$ for all $i\leq c_1$;
otherwise we may assume that $y_1=1$ and $y_i=0$ for $i=2,\cdots,c_1$. 
Thus 
\[\hbar=\epsilon\cdot \eta^3\colon S^6\to S^3.\]  
The homotopy equivalence in (\ref{Sq2eq0-1}) follows.

(2) By Lemma \ref{lem:Theta}, the condition that $\Theta\big(H^4(\Sigma M;\z{})\big)\neq 0$ implies at least one of $\varepsilon_i,s_j,t_j$ equals $1$. 

(a) The assumptions in (\ref{Sq2eq0-2a}) guarantee that in (\ref{hbar:Sq2eq0}),
\[\varepsilon_i=1 \text{ for some $i$ and } s_j=t_j=0\text{ for all $j$}.\]
Since $\smatwo{i_4\eta^2}{i_4\eta^2}\sim \smatwo{0}{i_4\eta^2}\colon S^6\to P^5(2^r)\vee P^5(2^s)$ for $r\leq s$,
we may assume that $\varepsilon_{j_1}=1$ and $\varepsilon_j=0$ for $j_1\neq j\leq t_2$, where $j_1$ is described in (\ref{Sq2eq0-2a}). Dual to the formula $q_4\tilde{\eta}_r= \eta$ (\ref{eq:eta}), there exists a map $\bar{\eta}_r\colon P^{5}(2^r)\to S^3$ satisfying $\bar{\eta}_r\circ i_{4}=\eta$, $2\bar{\eta}_1= \eta^2q_{5}.$
By the equivalence 
\[\matwo{i_4\eta^2}{\eta^3}\sim \matwo{i_4\eta^2}{0}\colon S^6\to P^5(2^r)\vee S^3\]
we can further assume that $y_i=0$ for all $i$. Thus we get 
\[\hbar=i_4\eta^2\colon S^6\to P^5(2^{r_{j_1}}),\]
and therefore (\ref{Sq2eq0-2a}) is proved.

(b) Under the conditions in (\ref{Sq2eq0-2b}), we have $s_j=1$ or $t_j=1$ for some $j$. By (\ref{eq:chi-eta}) and (\ref{eq:C_rs}),
there hold equivalences 
\begin{align*}
  \matwo{\tilde{\eta}_r\eta}{\beta_X}\sim \matwo{\tilde{\eta}_r\eta}{0}\colon S^6\to P^4(2^r)\vee X,\quad \matwo{i_P\tilde{\eta}_r\eta}{\beta_Y}\sim \matwo{i_P\tilde{\eta}_r\eta}{0}\colon S^6\to  C^5_r\vee Y,
\end{align*}
where $\beta_X=i_4\eta^2, \eta^3, \tilde{\eta}_s\eta$ for $X=P^5(2^s),S^3,P^4(2^s)(r\leq s)$; $\beta_Y=i_4\eta^2, \eta^3, \tilde{\eta}_s\eta$ for $Y=P^5(2^s),S^3, C^5_s(r\leq s)$, respectively.
Thus we may assume that $\varepsilon_i=y_i=0$ for all $i$; moreover, if $s_j=1$ for some $c_2+1\leq j\leq t_2$, then we can further assume that 
\[s_{j_2}=1\text{ and }s_j=0 \text{ for }j\neq j_2,\]
where $j_2$ is the index such that $r_{j_2}$ is the minimum of $r_j$ among the original indices $s_j=1$.
if $t_j=1$ for some $1\leq j\leq c_2$, then we can further assume that 
\[t_{j_3}=1\text{ and }t_j=0\text{ for } j\neq j_3,\]
where $j_3$ is the index such that $r_{j_3}$ is the minimum of $r_j$ among the original indices $t_j=1$.
By Example \ref{ex} we have  
\[\matwo{\tilde{\eta}_{r_{j_2}}\eta}{i_P\tilde{\eta}_{r_{j_3}}\eta}\sim \matwo{\tilde{\eta}_{r_{j_2}}\eta}{0}\text{ if }r_{j_2}\leq r_{j_3},\quad \matwo{\tilde{\eta}_{r_{j_2}}\eta}{i_P\tilde{\eta}_{r_{j_3}}\eta}\sim 
\matwo{0}{i_P\tilde{\eta}_{r_{j_3}}\eta}\text{ if }r_{j_2}>r_{j_3}.\]
Thus we get 
\[\hbar=\left\{\begin{array}{ll}
  \quad\tilde{\eta}_{r_{j_2}}\colon S^6\to P^4(2^{r_{j_2}}) &\text{ if } r_{j_2}\leq r_{j_3};\\[1ex]
  i_P\tilde{\eta}_{j_3}\eta\colon S^6\to C^{5}_{r_{j_3}} &\text{ otherwise.}
\end{array}\right.\]
The proof of (\ref{Sq2eq0-2b}) is completed.
\end{proof}
\end{proposition}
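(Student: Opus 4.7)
The plan is to start from the normal form (\ref{eq:hbar}) for $\hbar$ and systematically reduce it under the hypothesis that $\Sq^2$ acts trivially on $H^4(M;\z{})$. Since $\Sq^2$ is detected on the Chang complexes $C^{5}_\eta$ and $C^{5}_r$ via Lemma \ref{lem:Sq2}(\ref{Sq2-Changcpx}), triviality of $\Sq^2$ on $H^4(M;\z{})$ immediately forces the coefficients $x_i$ (weighting the $\tilde{\eta}_{r_i}$ components) and $z_i$ (weighting the $\eta$ components into free $S^3$ summands) in (\ref{eq:hbar}) to vanish. What survives is a sum of four families of components: $\varepsilon_i\cdot i_4\eta^2$ into $P^5(2^{r_i})$, $y_i\cdot\eta^3$ into a free $S^3$ coming from $C^5_\eta$, $s_j\cdot\tilde{\eta}_{r_j}\eta$ into $P^4(2^{r_j})$, and $t_j\cdot i_P\tilde{\eta}_{r_j}\eta$ into $C^5_{r_j}$. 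The remainder of the proof is then a case analysis showing which of these coefficients are forced to vanish by the cohomology-operation hypotheses and how the surviving ones are placed in canonical form by matrix equivalences.

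For case (\ref{Sq2eq0-1}), the vanishing of $\Theta$ on $H^3(M;\z{})$ combined with Lemma \ref{lem:Theta} forces $\varepsilon_i=s_j=t_j=0$, because each of $A^{5}(2^{r_i}\eta^2)$, $B^5(\tilde{\eta}_{r_j}\eta)$, and $C^5(i_P\tilde{\eta}_{r_j}\eta)$ carries a nonzero $\Theta$. Only $y_i\cdot\eta^3$ terms remain; any two such components landing in distinct $S^3$'s can be consolidated by an elementary row operation, so at most one $\eta^3$ survives, and whether it is actually present is decided by the tertiary operation $\mathbb{T}$. Applying Lemma \ref{lem:HL} to peel off the remaining wedge summands then yields the stated decomposition, with the $l=c_1=1$ exceptional case arising from the absence of a free $S^3$ into which to absorb the residual $\eta^3$.

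For case (\ref{Sq2eq0-2}), the hypothesis $\Theta\neq 0$ guarantees that at least one of $\varepsilon_i$, $s_j$, $t_j$ equals $1$. In subcase (\ref{Sq2eq0-2a}), the assumption that every $u+v$ with $\Theta(u)\neq 0$, $\Theta(v)=0$ lies outside $\im(\beta_s)$ prevents any $s_j$ or $t_j$ from being nonzero, so only $\varepsilon_i$-components survive; using the self-equivalences of $P^5(2^r)\vee P^5(2^s)$ for $r\leq s$ provided by Lemma \ref{lem:Moore1}(4), one reduces to a single $\varepsilon_{j_1}=1$ at the maximal Bockstein level, and any residual $y_i\cdot\eta^3$ is then absorbed by a dual map $\bar{\eta}_r\colon P^5(2^r)\to S^3$ with $\bar{\eta}_r\circ i_4=\eta$. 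In subcase (\ref{Sq2eq0-2b}), the Bockstein condition instead forces some $s_j$ or $t_j$ to equal $1$; exploiting (\ref{eq:chi-eta}), (\ref{eq:C_rs}) and the equivalences of Example \ref{ex} kills all other components, and the dichotomy in the stated conclusion is governed by whether the minimal relevant Bockstein level comes from a $\tilde{\eta}_{r_j}\eta$-summand or from an $i_P\tilde{\eta}_{r_j}\eta$-summand.

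The hard part will be bookkeeping: verifying that the matrix reductions used to normalize $\hbar$ preserve the cohomology-operation invariants that distinguish the subcases, and in particular that the dichotomy in (\ref{Sq2eq0-2b}) is exhaustive and collapses to a single form when $l=c_1=1$. Tracking the correct extremal index $r_{j_1}$ or $r_{j_2}$ through the $B(\chi^r_s)$ and $\bar{\xi}_r$ machinery is delicate, since an off-by-one between minimum and maximum would produce the wrong Peterson-space factor in the final wedge decomposition.
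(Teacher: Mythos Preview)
Your proposal is correct and follows essentially the same approach as the paper's proof: reduce (\ref{eq:hbar}) under the $\Sq^2$ hypothesis, then carry out the case analysis via $\Theta$, $\mathbb{T}$, and the Bocksteins, normalizing $\hbar$ by the matrix equivalences built from (\ref{eq:chi-eta}), (\ref{eq:C_rs}), Example \ref{ex}, and the dual map $\bar{\eta}_r$. Two minor slips worth fixing: the $z_i\cdot\eta$ components land in the $S^5$ summands (not $S^3$), and the vanishing of the $x_i$ is justified by Lemma \ref{lem:Sq2}(\ref{Sq2-eta_r}) (for $A^{7}(\tilde{\eta}_r)$) rather than by (\ref{Sq2-Changcpx}).
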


\begin{proposition}\label{prop:SM'-Sq2neq0}
 Suppose that $\Sq^2$ acts non-trivially on $H^4(M;\z{})$.
\begin{enumerate}[1.]
  \item\label{Sq2neq0-1} If for any $u\in H^5 (\Sigma M;\z{})$ with $\Sq^2(u)\neq 0$ and any $v\in \ker(Sq^2)$, there hold $u+v\notin \im (\beta_r)$ for any $r\geq 1$,
then there is a homotopy equivalence 
\begin{multline*}
 \small\Sigma M'\!\simeq \! P^5(T)\vee (\bigvee_{i=1}^{l-c_1} S^{3})\vee (\bigvee_{i=2}^{l-c_1-c_2} S^{5})
 \vee (\bigvee_{i=1}^{c_1} C^{5}_\eta)\vee P^4(T[c_2])\vee (\bigvee_{j=1}^{c_2}  C^{5}_{r_j})  \vee C^7_\eta .
\end{multline*}
This possibility doesn't exist if $l=1$ and $\Sq^2(H^2(M;\z{}))$.
  \item\label{Sq2neq0-2} If there exist $u,v\in H^5(\Sigma M;\z{})$ such that 
  \[\Sq^2(u)\neq 0,~~\Sq^2(v)= 0,~~u+v\in \im (\beta_{r_i}) \text{ for some }r_i,\]
  then there is a homotopy equivalence 
  \begin{multline*}
    \Sigma M'\simeq  P^5\big(\frac{T}{\z{r_{i_1}}}\big)\vee (\bigvee_{i=1}^{l-c_1} S^{3}) \vee (\bigvee_{i=1}^{l-c_1-c_2} S^{5})
    \vee (\bigvee_{i=1}^{c_1} C^{5}_\eta)\vee P^4(T[c_2])\\ \vee (\bigvee_{j=1}^{c_2}  C^{5}_{r_j}) \vee \big(P^5(2^{r_{i_1}})\cup_{\tilde{\eta}_{r_{i_1}}}e^7\big),
  \end{multline*}
  where $r_{i_1}$ is the minimum of $r_i$ such that $u+v\in \im (\beta_{r_i})$.
 
\end{enumerate}

\begin{proof}
By Lemma \ref{lem:Sq2} and \cite[Lemma 3.2 (2)]{lipc23}, the assumption that $\Sq^2$ acts non-trivally on $H^4(M;\z{})$, or equivalently on $H^5(\Sigma M;\z{})$ implies that at least one of $x_i$ and $z_i$ equals $1$ in the expression (\ref{eq:hbar}).

(1) If the conditions in (\ref{Sq2neq0-1}) hold, then $x_i=0$ for all $i\leq t_2$ and $z_i=1$ for at least one $i\leq l-c_1-c_2$. Clearly we may assume that there exists exactly one such $i$, say $z_1=1$ and $z_i=0$ for $2\leq i\leq l-c_1-c_2$.
There hold obvious equivalences 
\[ \matwo{\eta}{\beta_X}\sim \matwo{\eta}{0}\colon S^6\to S^5\vee X,\]
where $\beta_X=\eta^3,i_5\eta^2,\tilde{\eta}_r\eta,i_P\tilde{\eta}_r\eta$ for $X=S^3,P^5(2^r),P^4(2^r),C^5_r$, respectively.
Hence we may assume that $\varepsilon_i=y_i=s_i=t_i=0$ for all $i$. 
Thus we get  
\[\hbar=\eta\colon S^6\to S^5.\]
The homotopy equivalence in the Proposition (\ref{Sq2neq0-1}) follows. 

(2) If the conditions in (\ref{Sq2neq0-2}) hold, then we have $x_i=1$ for some $i\leq t_2$. By (\ref{eq:chi-eta}) there holds an equivalence 
\[ \matwo{\tilde{\eta}_r}{\tilde{\eta}_s}\sim \matwo{\tilde{\eta}_r}{0}\colon S^6\to P^5(2^r)\vee P^5(2^s) \text{ for }r\leq s.\]
Thus we may assume that $x_{i_1}=1$ and $x_i=0$ for $i_1\neq i\leq t_2$, where 
$r_{i_1}$ is the minimum of $r_i$ with $u+v\in \im(\beta_{r_i})$. The formula $q_5\tilde{\eta}_{r_1}=\eta$ implies 
\[ (1_P+i_4\eta q_5)(\tilde{\eta}_{r_1}+i_4\eta^2)=\tilde{\eta}_{r_1}.\]
Since $1_P+i_4\eta q_5$ is a self-homotopy equivalence of $P^5(2^{r_{i_1}})$, we may assume that $\varepsilon_{i_1}=0$.
The formula $q_5\tilde{\eta}_r=\eta$ also implies the following equivalences 
\[ \matwo{\tilde{\eta}_r}{\gamma_Y}\sim \matwo{\tilde{\eta}_r}{0}\colon S^6\to P^5(2^r)\vee Y,\]
where $\gamma_Y=i_4\eta^2,\eta,\tilde{\eta}_s\eta,i_P\tilde{\eta}_s\eta$ for $Y=P^5(2^s), S^5,P^4(2^s), C^5_s$, respectively.
Hence we may further assume that $\varepsilon_i=y_i=z_i=s_j=t_j=0$ for all $i,j$.
Thus we get 
\[\hbar=\tilde{\eta}_{r_{i_1}}\colon S^6\to P^5(2^{r_{i_1}}),\] 
which completes the homotopy equivalence in (\ref{Sq2neq0-2}). 
\end{proof}
\end{proposition}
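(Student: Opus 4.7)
The plan is to mimic the matrix-analysis strategy used in the proof of Proposition~\ref{prop:SM'-Sq2eq0}, starting from the normal form (\ref{eq:hbar}) for $\hbar$ and successively eliminating unwanted coefficients. First I would translate the hypothesis that $\Sq^2$ acts non-trivially on $H^4(M;\z{})$, equivalently on $H^5(\Sigma M;\z{})$, into a statement about $\hbar$: by Lemma~\ref{lem:Sq2} together with the naturality of $\Sq^2$, the only components of $\hbar$ which can detect a non-trivial $\Sq^2$ on $H^5$ are of the form $\eta\colon S^6\to S^5$ or $\tilde{\eta}_{r_i}\colon S^6\to P^5(2^{r_i})$. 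Thus at least one of the coefficients $x_i$ or $z_i$ in (\ref{eq:hbar}) equals $1$, and it is precisely the choice between these two possibilities which the Bockstein hypothesis will pin down.

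For part (\ref{Sq2neq0-1}), the assumption that $u+v\notin\im(\beta_r)$ for every relevant pair rules out any $\tilde{\eta}_{r_i}$-component, because such a component would create a Bockstein relation between the $\Sq^2$-detected class and a $P^5(2^{r_i})$-class. Hence $x_i=0$ for all $i$ and some $z_i$ must equal $1$; after permuting the copies of $S^5$ I would assume $z_1=1$ and $z_i=0$ for $i\geq 2$. The remaining coefficients $\varepsilon_i$, $y_i$, $s_j$, $t_j$ are then killed one-by-one by verifying equivalences of the form $\smatwo{\eta}{\beta_X}\sim\smatwo{\eta}{0}\colon S^6\to S^5\vee X$ for $X\in\{S^3,P^5(2^r),P^4(2^r),C^5_r\}$ and the matching $\beta_X$; each such equivalence is realized by a standard perturbation self-equivalence of $S^5\vee X$ that exploits the fact that $\beta_X$ factors through $\eta$. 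This reduces $\hbar$ to $\eta\colon S^6\to S^5$, whose cofibre is $C^7_\eta$, and Lemma~\ref{lem:HL} delivers the decomposition.

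For part (\ref{Sq2neq0-2}), the Bockstein hypothesis forces some $x_i=1$. Using (\ref{eq:chi-eta}) I would first establish $\smatwo{\tilde{\eta}_r}{\tilde{\eta}_s}\sim\smatwo{\tilde{\eta}_r}{0}$ for $r\leq s$, isolating a unique surviving index $i_1$ with $r_{i_1}$ the minimal $r_i$ satisfying the Bockstein condition. To normalize $\varepsilon_{i_1}$ I would invoke the self-equivalence $1_P+i_4\eta q_5$ of $P^5(2^{r_{i_1}})$ which, by $q_5\tilde{\eta}_{r_{i_1}}=\eta$ from (\ref{eq:eta}), absorbs the $i_4\eta^2$ term into $\tilde{\eta}_{r_{i_1}}$. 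The same identity then supplies equivalences $\smatwo{\tilde{\eta}_{r_{i_1}}}{\gamma_Y}\sim\smatwo{\tilde{\eta}_{r_{i_1}}}{0}$ for each remaining wedge summand $Y\in\{P^5(2^s),S^5,P^4(2^s),C^5_s\}$, killing the remaining $y_i$, $z_i$, $s_j$, $t_j$. The resulting $\hbar=\tilde{\eta}_{r_{i_1}}\colon S^6\to P^5(2^{r_{i_1}})$ has cofibre $P^5(2^{r_{i_1}})\cup_{\tilde{\eta}_{r_{i_1}}}e^7$, yielding the stated decomposition.

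The main obstacle I anticipate is bookkeeping: one must check that each matrix operation used to zero-out a coefficient arises from a genuine self-homotopy-equivalence of the target wedge and does not re-introduce a previously eliminated term. The explicit composition laws $q_m\tilde{\eta}_r=\eta$ and $2\tilde{\eta}_1=i_n\eta^2$, together with (\ref{eq:chi-eta}) and the factorizations through $\bar{\xi}_r$ from Lemma~\ref{lem:eq:C_r}, must be applied in the correct order. In particular, the step normalizing $\varepsilon_{i_1}$ genuinely requires the perturbation self-equivalence $1_P+i_4\eta q_5$ rather than a diagonal operation, because $i_4\eta^2$ cannot simply be cancelled against $\tilde{\eta}_{r_{i_1}}$ without it.
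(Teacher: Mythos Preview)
Your proposal is correct and follows essentially the same route as the paper's proof: start from the normal form (\ref{eq:hbar}), use Lemma~\ref{lem:Sq2} to see that the non-triviality of $\Sq^2$ on $H^5(\Sigma M;\z{})$ forces some $x_i$ or $z_i$ to be~$1$, then in each case use exactly the matrix equivalences $\smatwo{\eta}{\beta_X}\sim\smatwo{\eta}{0}$ (part~\ref{Sq2neq0-1}) and $\smatwo{\tilde{\eta}_r}{\gamma_Y}\sim\smatwo{\tilde{\eta}_r}{0}$ together with the self-equivalence $1_P+i_4\eta q_5$ (part~\ref{Sq2neq0-2}) to reduce $\hbar$ to a single map. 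The only bookkeeping slip is that your list $Y\in\{P^5(2^s),S^5,P^4(2^s),C^5_s\}$ kills $\varepsilon_i,z_i,s_j,t_j$ rather than ``$y_i,z_i,s_j,t_j$'' as you wrote; the $y_i$ (the $\eta^3$-component into $S^3$) is eliminated by the additional equivalence coming from $\eta^2 q_5\colon P^5(2^r)\to S^3$, which the paper also leaves implicit.
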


\begin{proof}[Proof of Theorem \ref{thm:6mfds}]
Recall that for our $6$-manifolds $M$, the Wu formula implies that the second Stiefel-Whitney class $\omega_2(M)=0$ if and only if $\Sq^2$ acts trivially on $H^4(M;\z{})$.
The discussion on the homotopy types of $\Sigma M$ follows by (\ref{eq:Wall}), Proposition \ref{prop:SM'-Sq2eq0} and \ref{prop:SM'-Sq2neq0}.
\end{proof}

\section{Proof of Theorem \ref{thm:8-mflds}}\label{sect:n=3}
Let $M$ be a $2$-connected $8$-dimensional \PD complex with $H_\ast(M)$ given by (\ref{HM}). In this section we study the homotopy decompositions of $\Sigma M$ after localization away from $2$. For convenience, we assume that $T$ contains no $2$-torsion.

By Corollary \ref{cor:2n+1-local} and Lemma \ref{lem:HL}, after localization away from $2$ the attaching map $h$ of the top cell of $M$ factors as the composition
\[S^{7}\xra{h'}X \hookrightarrow \big(\bigvee_{i=1}^l(S^3\vee S^{5})\big)\vee X,~~X=\big(\bigvee_{i=1}^dS^{4}\big)\vee  P^{4}(T)\vee P^{5}(T).\]
Thus we have a homotopy equivalence 
\begin{equation}\label{eq:p-odd}
  \Sigma M\simeq_{\loca} \big(\bigvee_{i=1}^l(S^{4}\vee S^{6})\big)\vee C_{h''},
\end{equation}
where $h''\colon S^8\to\big(\bigvee_{i=1}^dS^{4}\big)\vee  P^{4}(T)\vee P^{5}(T)$ is a suspension map and contains no Whitehead products.
Set 
\(T=\bigoplus_{u=1}^{\bar{t}}\Z/3^{r_u}\oplus T_{\geq 5}\) for some $0\leq \bar{t}\leq k$. 
By Lemma \ref{lem:p-odd:n=3} we may put 
\begin{equation}\label{eq:h''}
  h''=_{\loca}\sum_{i=1}^da_i\cdot\alpha_1(5)+\sum_{j=1}^{\bar{t}}b_j\cdot \tilde{\alpha}_1(5)+\sum_{k=1}^{\bar{t}}c_k\cdot i_5\alpha_1(5),
\end{equation}
where $a_i,b_j,c_k\in\Z/3.$ Here we use the convention that $b_j=c_k=0$ if $\bar{t}=0$.

\begin{lemma}\label{lem:p-odd:n=3:P1=0}
 Let $h''$ be the map given by (\ref{eq:h''}). If $\PP$ acts trivially on $H^5(C_{h''};\Z/3)$, then there is a homotoppy equivalence
 \[C_{h''} \simeq_{\loca} \big(\bigvee_{i=1}^dS^5\big)\vee P^5(T)\vee P^6(T).\]
 \begin{proof}
   Since $\PP$ acts trivially on $H^5(C_{h''};\Z/3)$, Lemma \ref{lem:p-odd:n=3} and \ref{lem:p-odd:n=3:P1} imply that in the equation (\ref{eq:h''}),
   \[a_i=b_j=c_k=0,~\forall~i,j,k.\]
Then we get the homotopy equivalence by Lemma \ref{lem:HL}.
 \end{proof}
\end{lemma}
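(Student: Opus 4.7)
The plan is to prove that $h''$ is null-homotopic after localization away from $2$, so that the cofibre sequence $S^8\xra{h''}\Sigma X\to C_{h''}$ splits and yields the stated wedge decomposition (together with an $S^9$ summand coming from the top cell). Given the expression (\ref{eq:h''}), this reduces to showing that every coefficient $a_i$, $b_j$, $c_k$ vanishes in $\Z/3$ under the hypothesis.

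The mechanism is naturality of $\PP$ along projections onto the wedge summands of the target of $h''$. Fix any summand $Y\in\{S^5_i,\,P^5(3^{r_j}),\,P^6(3^{r_k})\}$, let $\pi_Y$ be the canonical projection, and let $\bar\pi_Y\colon C_{h''}\to C_{\pi_Y\circ h''}$ denote the induced map of homotopy cofibres. Since both cofibre sequences share the same attaching sphere $S^8$ and $\bar\pi_Y$ restricts to the identity on the top cell $e^9$, the induced map $\bar\pi_Y^*$ is an isomorphism on $H^9(-;\Z/3)$.

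Let $\xi\in\{a_i,b_j,c_k\}$ be the coefficient associated with the summand $Y$; then $\pi_Y\circ h''$ is $\xi\cdot\alpha_1(5)$, $\xi\cdot\tilde\alpha_1(5)$, or $\xi\cdot i_5\alpha_1(5)$ accordingly. If $\xi\not\equiv 0\pmod 3$, then by Lemma \ref{lem:p-odd:n=3} the map $\pi_Y\circ h''$ generates the corresponding $\Z/3$ summand of $\pi_8(Y)$, so $C_{\pi_Y\circ h''}$ is homotopy equivalent to one of the three elementary complexes listed in Lemma \ref{lem:p-odd:n=3:P1}, each of which has $\PP$ acting nontrivially from $H^5$ to $H^9$. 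Naturality of $\PP$ along $\bar\pi_Y^{\,*}$, combined with the $H^9$-isomorphism noted above, then transports this nontriviality back to $H^5(C_{h''};\Z/3)$, contradicting the hypothesis. Hence every $\xi$ vanishes, so $h''$ is null-homotopic, and applying Lemma \ref{lem:HL} successively to split off each wedge summand produces the desired decomposition of $C_{h''}$.

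There is no serious obstacle: the hard cohomological computations have already been packaged into Lemma \ref{lem:p-odd:n=3:P1}, and Lemma \ref{lem:p-odd:n=3} supplies the identification of the relevant homotopy generators. The only point requiring care is verifying that $\bar\pi_Y^{\,*}$ is injective on top-cell cohomology, and this is immediate from the naturality of homotopy cofibres.
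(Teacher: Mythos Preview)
Your argument is correct and follows essentially the same route as the paper's proof: you show that each coefficient in (\ref{eq:h''}) must vanish by detecting a nontrivial $\PP$ on the cofibre of the corresponding projection (Lemma \ref{lem:p-odd:n=3:P1}) and pulling it back via naturality, then invoke Lemma \ref{lem:HL}. Your remark about the extra $S^9$ summand is also right---the stated decomposition of $C_{h''}$ is missing the top cell, as one sees by comparing with (\ref{eq:p-odd}) and Theorem \ref{thm:8-mflds}.
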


\begin{proposition}\label{prop:p-odd:n=3}
 Let $h''$ be the map given by (\ref{eq:h''}) and assume that $\PP$ acts non-trivially on $H^5(C_{h''};\Z/3)$. 
 \begin{enumerate}
   \item If for any $u\in H^5(C_{h''};\Z/3)$ with $\PP(u)\neq 0$ and any $v\in \ker(\PP)$, there hold $\beta_r(u+v)=0$ and $u+v\notin\im(\beta_s)$ for all $r,s\geq 1,$
   then there is a homotopy equivalence 
   \[C_{h''}\simeq_{\loca}(\bigvee_{i=2}^dS^5)\vee P^5(T)\vee P^6(T)\vee (S^5\cup_{\alpha_1(5)}e^9).\]

   \item If there exist $u\in H^5(C_{h''};\Z/3)$ with $\PP(u)\neq 0$ and $v\in \ker(\PP)$ such that $u+v\in \im(\beta_{r})~\text{ for some $r$},$
   then 
   \[C_{h''}\simeq_{\loca}(\bigvee_{i=2}^dS^5)\vee P^5\big(\frac{T}{\Z/3^{r_{j_1}}}\big)\vee P^6(T)\vee  P^{5}(3^{r_{j_1}})\cup_{\tilde{\alpha}_1(5)}e^9,\]
   where $r_{j_1}$ is the minimum of $r_j$ such that $u+v\in \im(\beta_{r_j})$.

  \item If for any $u\in H^5(C_{h''};\Z/3)$ with $\PP(u)\neq 0$ and any $v\in \ker(\PP)$, there hold 
  \(u+v\notin \im(\beta_s)\) for all $s\geq 1$;
  while there exist $u'\in H^5(C_{h''};\Z/3)$ with $\PP(u')\neq 0$ and $v'\in \ker(\PP)$ such that $\beta_r(u'+v')\neq 0~\text{ for some $r\geq 1$},$
then 
\[ C_{h''}\simeq_{\loca}(\bigvee_{i=2}^dS^5)\vee P^5(T)\vee P^6\big(\frac{T}{\Z/3^{r_{j_2}}}\big)\vee P^{6}(3^{r_{j_2}})\cup_{i_5\alpha_1(5)}e^9,\]
where $r_{j_2}$ is the maximum of $r_j$ such that $\beta_{r_j}(u'+v')\neq 0$.
 \end{enumerate}

\begin{proof}
Since $\PP$ acts non-trivially on $H^5(C_{h''};\Z/3)$, by Lemma \ref{lem:p-odd:n=3:P1} we see that in (\ref{eq:h''}) at least one of $a_i,b_j,c_j$ is non-zero. 
The relation $q_5\tilde{\alpha}_1(5)\simeq \alpha_1(5)$ implies the following equivalences
\begin{equation}\label{eq:n=3}
  \begin{aligned}
 \matwo{\alpha_1(5)}{\tilde{\alpha}_1(5)}&\sim \matwo{0}{\tilde{\alpha}_1(5)}\colon S^8\to S^5\vee P^5(3^r),\\
 \matwo{\tilde{\alpha}_1(5)}{i_5\alpha_1(5)}&\sim \matwo{\tilde{\alpha}_1(5)}{0}\colon S^8\to P^5(3^r)\vee P^6(3^s),\\
 \matwo{\alpha_1(5)}{i_5\alpha_1(5)}&\sim \matwo{\alpha_1(5)}{0}\colon S^8\to S^5\vee P^6(3^r).
\end{aligned}
\end{equation}

(1) Under the additional condition,  Lemma \ref{lem:p-odd:n=3:P1} implies  
\[b_j=c_k=0,~\forall~j,k\leq \bar{t}.\] 
Then by Lemma \ref{lem:HL} we have a homotopy equivalence 
 \[C_{h''}\simeq_{\loca} C_{h'''}\vee P^5(T)\vee P^6(T)\]
for some map $h'''=\sum_{i=1}^d a_i\cdot\alpha_1(5)\colon S^8\to \bigvee_{i=1}^dS^5$, $a_i\in\Z/3$. Since $\matwo{\nu_5}{\nu_5}\sim \matwo{\nu_5}{0}\sim \matwo{0}{\nu_5}$, we may assume that $a_1=1,a_2=\cdots=a_d=0$; thus we have 
\[C_{h'''}\simeq (\bigvee_{i=2}^dS^5)\vee (S^5\cup_{\alpha_1(5)}e^9).\]
Therefore there is a homotopy equivalence 
\[C_{h''}\simeq_{\loca}(\bigvee_{i=2}^dS^5)\vee P^5(T)\vee P^6(T)\vee  (S^5\cup_{\alpha_1(5)}e^9).\]

(2) Under the additional condition we have 
\[\bar{t}\geq 1,~b_j=\pm 1\text{ for some $j$},\]  
The equivalence relations in (\ref{eq:n=3}) then imply that 
\[a_i=c_k=0,~\forall~i=1,\cdots,d,~t=1,\cdots,\bar{t}.\] 
For $s\geq r$, the relation $B(\chi^r_s)\tilde{\alpha}_1(5)=\tilde{\alpha}_1(5)$ in Lemma \ref{lem:p-odd:n=3} implies an equivalence 
\[\matwo{\tilde{\alpha}_1(5)}{\tilde{\alpha}_1(5)}\sim \matwo{\tilde{\alpha}_1(5)}{0}\colon S^8\to P^5(3^r)\vee P^5(3^s),~~s\geq r.\]
Thus we may further assume that 
\[b_{j_1}=\pm 1,\quad b_j=0,~\forall~b_{j_1}<b_j\leq b_{\bar{t}}.\] 
Then the homotopy equivalence in (2) follows by Lemma \ref{lem:HL}.

(3) The additional condition implies 
\[ \bar{t}\geq 1,~a_i=b_j=0,~\forall~i\leq d,j\leq \bar{t},\]  
and hence $c_k=\pm 1$ for some $t\leq \bar{t}$.  For $r\geq s$, the relation 
$B(\chi^r_s)i_5\simeq i_5$ in (\ref{eq:chi})
implies the equivalence
\[\matwo{i_5\alpha_1(5)}{i_5\alpha_1(5)}\sim \matwo{i_5\alpha_1(5)}{0}\colon S^5\to P^6(3^r)\vee P^6(3^s), ~r\geq s.\]
Thus we may assume that 
\[c_{j_2}=\pm 1,\quad c_k=0, ~\forall~1\leq c_k<c_{j_2}.\]
The homotopy equivalence in (3) then follows by Lemma \ref{lem:HL}.
\end{proof}
\end{proposition}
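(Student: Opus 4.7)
The plan is to analyze the attaching map $h''$ coordinate-by-coordinate in the $3$-local homotopy groups of the wedge summands of its target, then detect which components are essential using the Steenrod power $\PP$ together with the higher Bocksteins $\beta_r$, and finally normalize the resulting expression by row operations on the matrix of $h''$. The starting point is the expression (\ref{eq:h''}): after localization away from $2$, only $3$-primary classes contribute at this dimension, and Lemma \ref{lem:p-odd:n=3} identifies the relevant generators as $\alpha_1(5)$, $\tilde{\alpha}_1(5)$, and $i_5\alpha_1(5)$, so any $h''$ is a $\Z/3$-linear combination of these.

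The key diagnostic step is to separate the three situations using cohomology operations. By Lemma \ref{lem:p-odd:n=3:P1}, each of the three generators produces a nontrivial $\PP$-action on $H^\ast(C_{h''};\Z/3)$, but the three attaching data differ in how the nontrivial source and target classes sit with respect to the Bockstein spectral sequence. Attaching along $\tilde{\alpha}_1(5)$ places the source of $\PP$ in the image of $\beta_{r_j}$, since $\tilde{\alpha}_1(5)$ factors through the Moore space $P^5(3^{r_j})$; attaching along $i_5\alpha_1(5)$ places the target of $\PP$ in the image of $\beta_{r_k}$; while attaching along $\alpha_1(5)$ alone leaves both classes free of Bockstein images. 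Matching these three patterns against the hypotheses of parts (1), (2), (3) will pin down exactly which of the coefficients $a_i$, $b_j$, $c_k$ in (\ref{eq:h''}) can be nonzero in each case.

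Once the active class of coefficients has been isolated, I would normalize by row operations using the relations from the end of Lemma \ref{lem:p-odd:n=3}. The identity $q_5\tilde{\alpha}_1(5) = \alpha_1(5)$ produces equivalences of the form (\ref{eq:n=3}) that allow absorbing $\alpha_1(5)$ and $i_5\alpha_1(5)$ terms into a single $\tilde{\alpha}_1(5)$ term whenever some $b_j$ is nonzero, and similarly the dual pinch relation handles absorption into a single $i_5\alpha_1(5)$ term. The relation $B(\chi^r_s)\tilde{\alpha}_1(5) = \tilde{\alpha}_1(5)$ for $s \geq r$ then collapses several $\tilde{\alpha}_1(5)$ terms onto the one indexed by the minimum $r_{j_1}$, while the dual relation $B(\chi^r_s)i_5 = i_5$ for $r \geq s$ collapses $i_5\alpha_1(5)$ terms onto the maximum $r_{j_2}$. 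After this normalization, $h''$ hits a single wedge summand through a single generator, and Lemma \ref{lem:HL} peels off the remaining summands as direct wedge factors, producing the decompositions stated in (1), (2), and (3).

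The step requiring most care is keeping straight whether the minimum or maximum index is being selected in each case, because this depends on the direction in which the self-map $B(\chi^r_s)$ acts as the identity via (\ref{eq:chi}) and must be matched to the form of the Bockstein hypothesis: the condition ``lies in the image of $\beta_r$'' is naturally detected by the minimum exponent among the $\tilde{\alpha}_1(5)$-summands, while the condition ``$\beta_r$ is nonzero on'' is detected by the maximum exponent among the $i_5\alpha_1(5)$-summands. I would also need a brief sanity check that in Case (1), where only $a_i$-components remain, elementary subtraction between the sphere summands reduces the attaching map to a single $\alpha_1(5)$ hitting one copy of $S^5$, so that the residual wedge factor is indeed $S^5 \cup_{\alpha_1(5)} e^9$.
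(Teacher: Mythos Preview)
Your proposal is correct and follows essentially the same route as the paper: identify from the Bockstein/$\PP$ hypotheses which of the coefficient families $a_i,b_j,c_k$ in (\ref{eq:h''}) survives, then use the equivalences (\ref{eq:n=3}) and the $B(\chi^r_s)$ relations from Lemma \ref{lem:p-odd:n=3} and (\ref{eq:chi}) to collapse to a single nonzero component, and conclude by Lemma \ref{lem:HL}. One small slip: in the $i_5\alpha_1(5)$ case it is the \emph{source} class in $H^5$ that carries a nonzero $\beta_r$ (into $H^6$), not the target of $\PP$; you correct this implicitly in your final paragraph, and it does not affect the argument.
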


\begin{proof}[Proof of Theorem \ref{thm:8-mflds}]
  Combine (\ref{eq:p-odd}), Lemma \ref{lem:p-odd:n=3:P1=0} and Proposition \ref{prop:p-odd:n=3}.
\end{proof}

\section{The weak second James-Hopf invariant} \label{sect:JH}
Let $X,Y$ be connected CW-complexes.
Recall that the Hilton-Milnor theorem states that there is a homotopy equivalence 
\[\Omega \Sigma (X\vee Y)\simeq \Omega\Sigma X\times \Omega\Sigma Y\times \Omega\Sigma (\vee_{i\geq 1}X^{\wedge i}\wedge Y).\]
Let $\iota_1\colon X\to X\vee Y$ and $\iota_2\colon Y\to X\vee Y$ be the canonical inclusion into wedge sum, respectively.
Let $E=E_X\colon X\to \Omega \Sigma X$ be the suspension map, which is the inclusion map if we identify $\Omega\Sigma X$ with the James construction $J(X)$ \cite{James55}.
Recall the following duality isomorphism 
\[\Omega_0\colon [\Sigma X,Y]\to [X,\Omega Y],\quad\Omega_0f=(\Omega f)\circ E_X.\]
For maps $f\colon \Sigma X\to Z,g\colon \Sigma Y\to Z$, the Whitehead product 
\[[f,g]\colon \Sigma X\wedge Y\to Z\] is the duality of the Samelson product 
\[[\Omega_0 f,\Omega_0g]^S\colon X\wedge Y\to \Omega Z; \]
that is, $\Omega_0[f,g]=[\Omega_0f,\Omega_0g]^S$.
Let $X$ be a connected CW co-$H$-space.  Recall that the second James-Hopf invariant $H=H_2$ is the composition 
\[H\colon \Omega \Sigma X\xra{\Omega \Sigma (\iota_1+\iota_2)} \Omega \Sigma (X\vee X)\xra{p_{[\iota_1,\iota_2]}}\Omega\Sigma (X\wedge X),\]
where $p_{[\iota_1,\iota_2]}$ is the canonical projection after pre-composing with the homotopy equivalence given by the Hilton-Milnor theorem.

For a map $f\colon X\to Y$, denote by $f_E$ the composition 
\[f_E=E_Y\circ f=(\Omega\Sigma f)\circ E_X=\Omega_0(\Sigma f)\colon X\to \Omega \Sigma Y.\] 
Let $\iota_i\colon S^{i+1}\to S^2\vee S^3$, $i=1,2$.
There hold isomorphisms 
\begin{align*}
  \pi_5(\Omega\Sigma (S^2\vee S^3)^{\wedge 2})&\cong \pi_5(\Omega\Sigma S^2\wedge S^3)\oplus \pi_5(\Omega\Sigma S^3\wedge S^2)\oplus \pi_5(\Omega\Sigma S^2\wedge S^2)\\
  &\cong \Z\langle (\iota_1\wedge \iota_2)_E\rangle\oplus \Z\langle (\iota_2\wedge \iota_1)_E\rangle\oplus\Z/2\langle (\iota_1\wedge \iota_1)_E\circ \eta_4\rangle,
\end{align*}

\begin{lemma}\label{lem:H_2-S3S4}
  Let $H_\sharp\colon \pi_5(\Omega\Sigma (S^2\vee S^3))\to \pi_5(\Omega\Sigma (S^2\vee S^3)^{\wedge 2})$ be the homomorphism induced by the second James-Hopf invariant. Assume that 
  \[H_\sharp(\Omega_0[\Sigma\iota_1,\Sigma\iota_2])=x\cdot (\iota_1\wedge \iota_2)_E+y\cdot (\iota_2\wedge \iota_1)_E+z\cdot  (\iota_1\wedge \iota_1)_E\circ \eta_4,\]
then $x,y$ are odd integers.

\begin{proof}
  Let $\sigma_n\in H_n(S^n)\cong\Z$ be a generator. There are isomorphisms
  \begin{align*}
    H_5(S^5)&\xra{\tau_{2,3}}H_5(S^2\wedge S^3)\cong H_2(S^2)\otimes H_3(S^3),\quad \sigma_5\mapsto (-1)^\alpha\cdot \sigma_2\otimes\sigma_3;\\
    H_5(S^5)&\xra{\tau_{3,2}}H_5(S^3\wedge S^2)\cong H_3(S^3)\otimes H_2(S^2),\quad \sigma_5\mapsto (-1)^\beta\cdot \sigma_3\otimes\sigma_2,
  \end{align*}
  where $\alpha,\beta$ are integers. By the Bott-Samelson theorem,
  \begin{align*}
    H_5(\Omega\Sigma (S^2\vee S^3))&\cong T\langle (\iota_1)_{E\ast}(\sigma_2), (\iota_2)_{E\ast}(\sigma_3)\rangle,\\
    H_5(\Omega\Sigma (S^2\vee S^3)^{\wedge 2})&\cong T\langle (\iota_i\wedge \iota_j)_{E\ast}(\sigma_{i+1}\otimes\sigma_{j+1})~|~i,j=1,2\rangle,
  \end{align*}
where $T\langle x_1,\cdots,x_m\rangle$ is the free tensor algebra $T(V)$ and $V$ is the free abelian group generated by $x_1,\cdots,x_m$.

Consider the following commutative diagram with $X=S^2\vee S^3$:
\begin{equation}\label{diag:H_2}
  \begin{aligned}
    \begin{tikzcd}[sep=scriptsize]
    \pi_5(\Omega\Sigma X)\ar[d,"H_\sharp"]\ar[r,"h"]&H_5(\Omega\Sigma X)\ar[r,"\rho_2"]\ar[d,"H_\ast"]&H_5(\Omega\Sigma X;\z{})\ar[d,"H_\ast"] \\
    \pi_5(\Omega\Sigma X^{\wedge 2})\ar[r,"h"]&H_5(\Omega\Sigma X^{\wedge 2})\ar[r,"\rho_2"]&H_5(\Omega\Sigma X^{\wedge 2};\z{})
  \end{tikzcd},
  \end{aligned}
\end{equation}
where $h$ are the Hurewicz homomorphisms, $\rho_2$ are the mod $2$ reductions.
Note that 
\[\Omega_0[\Sigma\iota_1,\Sigma\iota_2]=[\Omega_0\Sigma\iota_1,\Omega_0\Sigma\iota_2]^S=[(\iota_1)_E,(\iota_2)_E]^S.\]
 Then we compute that 
\begin{align*}
  h([(\iota_1)_E,(\iota_2)_E]^S)&=(-1)^{\alpha}[(\iota_1)_E,(\iota_2)_E]^S_\ast(\sigma_3\otimes\sigma_2)\\
  &=(-1)^{\alpha}\big((\iota_1)_{E\ast}(\sigma_3)\otimes (\iota_2)_{E\ast}(\sigma_2)-(\iota_2)_{E\ast}(\sigma_2)\otimes (\iota_1)_{E\ast}(\sigma_3)\big)\\
  &=(-1)^{\alpha}\big((\iota_1\wedge \iota_2)_{E\ast}(\sigma_2\otimes\sigma_3)-(\iota_2\wedge \iota_1)_{E\ast}(\sigma_3\otimes\sigma_2)\big), \\
  h((\iota_1\wedge \iota_2)_E)&=(-1)^\alpha (\iota_1\wedge \iota_2)_{E\ast}(\sigma_2\otimes \sigma_3),\\
  h((\iota_2\wedge \iota_1)_E)&=(-1)^{\beta} (\iota_2\wedge \iota_1)_{E\ast}(\sigma_3\otimes \sigma_2),\\
  h((\iota_1\wedge \iota_1)_E\circ \eta_4)&=0.
\end{align*}
For simplicity denote by $\bar{x}=\rho_2(x)$ for an element $x\in G$; write
\[\mathrm{RD}=H_\ast\rho_2h([(\iota_1)_E,(\iota_2)_E]^S),\quad \mathrm{DR}=\rho_2 h H_\sharp([(\iota_1)_E,(\iota_2)_E]^S).\] 
 \begin{align*}
  \mathrm{RD}&= H_\ast\big(\overline{(\iota_1)_{E\ast}(\sigma_3)}\otimes \overline{(\iota_2)_{E\ast}(\sigma_2)}-\overline{(\iota_2)_{E\ast}(\sigma_2)}\otimes \overline{(\iota_1)_{E\ast}(\sigma_3)}\big)\\
   &= \big(\overline{(\iota_1)_{E\ast}(\sigma_3)\otimes (\iota_2)_{E\ast}(\sigma_2)}-\overline{(\iota_2)_{E\ast}(\sigma_2)\otimes (\iota_1)_{E\ast}(\sigma_3)}\big)\\
   &=\big(\overline{(\iota_1\wedge \iota_2)_{E\ast}(\sigma_2\otimes\sigma_3)}-\overline{(\iota_2\wedge \iota_1)_{E\ast}(\sigma_3\otimes\sigma_2)}\big),\\[1ex]
   \mathrm{DR}&= \rho_2 \big( x\cdot h (\iota_1\wedge \iota_2)_E+y\cdot (\iota_2\wedge \iota_1)_E\big)\\
   &=\big(\bar{x}\cdot \overline{(\iota_1\wedge \iota_2)_{E\ast}(\sigma_2\otimes \sigma_3)}- \bar{y}\cdot \overline{(\iota_2\wedge \iota_1)_{E\ast}(\sigma_3\otimes\sigma_2)}\big).
 \end{align*}
Thus $\mathrm{RD}=  \mathrm{DR}$ implies $\bar{x}=\bar{y}=1$; i.e., $x,y$ are odd integers.
\end{proof}
\end{lemma}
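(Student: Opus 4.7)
The plan is to transport the statement from $\pi_5$ into homology via the Hurewicz homomorphism, where the Bott--Samelson theorem makes everything algebraic, and then to extract the parities of $x$ and $y$ by reducing mod $2$. The commutative diagram (\ref{diag:H_2}) is the engine: naturality of the Hurewicz homomorphism turns $H_\sharp$ into the homology-level $H_\ast$, whose effect on mod-$2$ Bott--Samelson tensor algebras is well understood.

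The first step is to identify $H_5(\Omega\Sigma X)$ and $H_5(\Omega\Sigma X^{\wedge 2})$ with pieces of the tensor algebras on $\tilde H_\ast(X)$ and $\tilde H_\ast(X^{\wedge 2})$ via Bott--Samelson, and to compute the Hurewicz image of $[(\iota_1)_E,(\iota_2)_E]^S$. Since the Samelson product corresponds to the graded commutator in the Pontrjagin ring, this gives, up to the sign $(-1)^\alpha$ produced by the suspension shuffle $\tau_{2,3}$, the commutator $(\iota_1)_{E\ast}(\sigma_3)\otimes(\iota_2)_{E\ast}(\sigma_2)-(\iota_2)_{E\ast}(\sigma_2)\otimes(\iota_1)_{E\ast}(\sigma_3)$. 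The Hurewicz images of the three candidate generators on the target side are equally direct: $(\iota_1\wedge\iota_2)_E$ and $(\iota_2\wedge\iota_1)_E$ are spherical and hit $\pm$ the corresponding Bott--Samelson monomials of length one, while $(\iota_1\wedge\iota_1)_E\circ\eta_4$ has vanishing Hurewicz image because $\eta_4$ does.

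The second step is to feed these two computations into both compositions in (\ref{diag:H_2}) after mod-$2$ reduction. The key input is the standard fact that, on mod-$2$ homology, $H_\ast\colon H_\ast(\Omega\Sigma X;\mathbb{Z}/2)\to H_\ast(\Omega\Sigma X^{\wedge 2};\mathbb{Z}/2)$ sends a length-$2$ Bott--Samelson monomial $u\otimes v$ to the corresponding smash generator $(u\wedge v)$ and kills higher-length contributions outside the relevant weight; this is what allows the two sides of the naturality identity $H_\ast\circ h=h\circ H_\sharp$ to be compared term by term. Setting the two mod-$2$ expressions equal collapses all signs and forces $\bar x=\bar y=1$, i.e.\ $x$ and $y$ are odd. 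The coefficient $z$ is invisible to the Hurewicz map and the lemma correctly makes no assertion about its parity.

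The main obstacle I anticipate is checking the claimed behavior of the James--Hopf invariant on the mod-$2$ Bott--Samelson generators in the required weight; one has to verify that the Hilton--Milnor projection $p_{[\iota_1,\iota_2]}$ sees precisely the length-two pieces and no mixed or higher contributions at dimension $5$. Once this is in place, the rest is a bookkeeping exercise with signs and the two shuffle isomorphisms $\tau_{2,3}$, $\tau_{3,2}$, which cancel on mod-$2$ reduction.
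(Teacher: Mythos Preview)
Your proposal is correct and follows essentially the same approach as the paper: both transport the question to homology via the Hurewicz map, use Bott--Samelson to identify the relevant $H_5$ groups with degree-5 pieces of tensor algebras, compute the Hurewicz image of the Samelson product as the graded commutator, observe that the $\eta_4$ term has vanishing Hurewicz image, and then compare the two paths around the mod-$2$ naturality square to force $\bar x=\bar y=1$. The obstacle you flag---checking the mod-$2$ behaviour of $H_\ast$ on length-two Bott--Samelson monomials---is exactly the point the paper handles implicitly in its ``$\mathrm{RD}$'' computation.
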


Let $\Sigma\iota_1\colon \Sigma X\to \Sigma X\vee \Sigma Y$ and $\Sigma\iota_2\colon \Sigma Y\to \Sigma X\vee \Sigma Y$ be the canonical inclusions, respectively. 
Temporarily we define the \emph{weak second James-Hopf invariant} 
\[H^w(\Sigma X,\Sigma Y)\colon \pi_{n}(\Sigma X\wedge Y)\to \pi_{n}(\Sigma X\wedge Y)\] 
by the following defined commutative diagram 
\[\begin{tikzcd}[sep=scriptsize]
  \pi_n(\Sigma X\vee \Sigma Y)\ar[rr,"H_\sharp"]&&\pi_n(\Sigma (X\vee Y)^{\wedge 2})\ar[d,"p_{(\Sigma X,\Sigma Y)}"]\\
  \pi_{n}(\Sigma X\wedge Y)\ar[u,"{[\Sigma\iota_1,\Sigma\iota_2]_\sharp}"]\ar[rr,"{H^w(\Sigma X,\Sigma Y)}"]  &&\pi_n(\Sigma X\wedge Y)
\end{tikzcd},\] 
where $p_{(\Sigma X,\Sigma Y)}$ is the projection onto the second direct summand after composing with the natural isomorphism 
\[\pi_n(\Sigma(X\vee Y)^{\wedge 2})\cong \pi_n(\Sigma X\wedge X)\oplus \pi_n(\Sigma X\wedge Y)\oplus \pi_n(\Sigma Y\wedge X)\oplus\pi_n(\Sigma Y\wedge Y).\]

The two lemmas below follows by the naturality of the homomorphisms $H^w$ for appropriate spaces.
\begin{lemma}\label{lem:HW-nat}
  Let $\Sigma f\colon \Sigma X\to\Sigma X'$, $\Sigma g\colon Y\to \Sigma Y'$. If 
  \[(\Sigma f\wedge g)_\sharp\colon \pi_n(\Sigma X\wedge Y)\to \pi_n(\Sigma X'\wedge Y')\]
  is an isomorphism, then $H^w(\Sigma X,\Sigma Y)$ is an isomorphism if and only if $H^w(\Sigma X',\Sigma Y')$ is an isomorphism. 
\end{lemma}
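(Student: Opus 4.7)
The plan is to derive the lemma from naturality of $H^w$ in the pair $(\Sigma X,\Sigma Y)$. Concretely, I will verify that each of the three constituents in the definition of $H^w$ --- the Whitehead product $[\Sigma\iota_1,\Sigma\iota_2]$, the second James--Hopf invariant $H$, and the Hilton--Milnor projection $p_{(\Sigma X,\Sigma Y)}$ --- is natural with respect to maps of the form $\Sigma(f\vee g)$. Pasting these three naturality squares then yields the commutative diagram
\[
\begin{tikzcd}
\pi_n(\Sigma X\wedge Y) \ar[r,"H^w"] \ar[d,"(\Sigma f\wedge g)_\sharp"'] & \pi_n(\Sigma X\wedge Y) \ar[d,"(\Sigma f\wedge g)_\sharp"] \\
\pi_n(\Sigma X'\wedge Y') \ar[r,"H^w"'] & \pi_n(\Sigma X'\wedge Y')
\end{tikzcd}
\]
from which the conclusion is immediate: if both vertical arrows are isomorphisms, then the top horizontal map is an isomorphism precisely when the bottom one is.

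For the three constituents, let $\iota_1',\iota_2'$ denote the analogous inclusions for $\Sigma X'\vee\Sigma Y'$. Naturality of the Whitehead product gives
\[(\Sigma f\vee\Sigma g)\circ[\Sigma\iota_1,\Sigma\iota_2]\simeq [\Sigma\iota_1',\Sigma\iota_2']\circ\Sigma(f\wedge g).\]
The James--Hopf invariant $H=H_2$ is natural for maps of James constructions, hence
\[\Omega\Sigma(f\vee g)^{\wedge 2}\circ H\simeq H\circ\Omega\Sigma(f\vee g).\]
Finally, the projection $p_{(\Sigma X,\Sigma Y)}$ onto the weight-two summand corresponding to $[\iota_1,\iota_2]$ in the Hilton--Milnor decomposition is functorial in the wedge summands, and the restriction of $\Sigma(f\vee g)^{\wedge 2}$ to the $\Sigma X\wedge Y$-factor is by construction $\Sigma(f\wedge g)=\Sigma f\wedge g$.

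There is no real obstacle here; the only mild subtlety lies in step three, namely checking that under the natural splitting
\[\pi_n\bigl(\Sigma(X\vee Y)^{\wedge 2}\bigr)\cong \pi_n(\Sigma X^{\wedge 2})\oplus\pi_n(\Sigma X\wedge Y)\oplus\pi_n(\Sigma Y\wedge X)\oplus\pi_n(\Sigma Y^{\wedge 2}),\]
the component of $\Sigma(f\vee g)^{\wedge 2}_\sharp$ mapping the $\Sigma X\wedge Y$-summand to the $\Sigma X'\wedge Y'$-summand is precisely $(\Sigma f\wedge g)_\sharp$ and carries no mixed contribution from the other three summands. This holds because $(f\vee g)^{\wedge 2}$ respects the wedge decomposition of $(X\vee Y)^{\wedge 2}$ into the four smash factors, and that is what makes the whole argument go through formally.
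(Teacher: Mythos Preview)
Your proof is correct and is exactly the approach the paper intends: the paper gives no detailed argument and simply states that the lemma ``follows by the naturality of the homomorphisms $H^w$ for appropriate spaces.'' Your write-up fleshes out precisely that naturality, verifying it componentwise for the Whitehead product, the James--Hopf map, and the Hilton--Milnor projection, so it is a strictly more detailed version of the same proof.
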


\begin{lemma}\label{lem:HW-cof}
  Let $U\xra{f}V\xra{i}X\xra{q}\Sigma U$ be a homotopy cofibration which induces the short exact sequence 
  \[0\to \pi_n(\Sigma V\wedge Y)\xra{(\Sigma i\wedge 1_Y)_\sharp} \pi_n(\Sigma X\wedge Y)\xra{(\Sigma q\wedge 1_Y)_\sharp}\pi_n(\Sigma^2U\wedge Y)\to 0.\]
If both $H^w(\Sigma V,\Sigma Y)$ and $H^w(\Sigma^2U, \Sigma Y)$ are isomorphisms, then so is $H^w(\Sigma X,\Sigma Y)$.
\end{lemma}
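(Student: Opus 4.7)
The plan is to apply the Short Five Lemma to the commutative ladder whose top and bottom rows are two copies of the given short exact sequence and whose three vertical arrows are the corresponding weak second James--Hopf invariants $H^{w}(\Sigma V,\Sigma Y)$, $H^{w}(\Sigma X,\Sigma Y)$ and $H^{w}(\Sigma^{2}U,\Sigma Y)$. Since the two outer vertical arrows are assumed to be isomorphisms, the conclusion that $H^{w}(\Sigma X,\Sigma Y)$ is an isomorphism will follow immediately once the ladder is shown to commute.

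Everything therefore reduces to a naturality assertion: for any map $\phi\colon \Sigma X\to \Sigma X'$ of suspensions, the square
\[\begin{tikzcd}[sep=small]
\pi_{n}(\Sigma X\wedge Y)\ar[r,"H^{w}(\Sigma X,\Sigma Y)"]\ar[d,"(\phi\wedge 1_{Y})_{\sharp}"']& \pi_{n}(\Sigma X\wedge Y)\ar[d,"(\phi\wedge 1_{Y})_{\sharp}"]\\
\pi_{n}(\Sigma X'\wedge Y)\ar[r,"H^{w}(\Sigma X',\Sigma Y)"']& \pi_{n}(\Sigma X'\wedge Y)
\end{tikzcd}\]
commutes. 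Taking $\phi=\Sigma i$ produces the left-hand square and taking $\phi=\Sigma q$, after identifying $\Sigma^{2}U\wedge Y\simeq \Sigma(\Sigma U\wedge Y)$, produces the right-hand square of the ladder.

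To establish this naturality I would unpack the defining composition
\[\pi_{n}(\Sigma X\wedge Y)\xra{[\Sigma\iota_{1},\Sigma\iota_{2}]_{\sharp}}\pi_{n}(\Sigma X\vee\Sigma Y)\xra{H_{\sharp}}\pi_{n}(\Sigma (X\vee Y)^{\wedge 2})\xra{p_{(\Sigma X,\Sigma Y)}}\pi_{n}(\Sigma X\wedge Y)\]
and verify naturality at each stage with respect to the wedge map $\phi\vee 1_{\Sigma Y}\colon \Sigma X\vee \Sigma Y\to \Sigma X'\vee \Sigma Y$. The Whitehead product $[\Sigma\iota_{1},\Sigma\iota_{2}]$ is natural in the wedge summands, so the first stage is natural. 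The second James-Hopf invariant $H$ is a natural transformation defined through the James construction, so it commutes with the map induced by $\phi\vee 1_{\Sigma Y}$ on both sides. Finally the Hilton-Milnor projection $p_{(\Sigma X,\Sigma Y)}$ is natural for wedge maps of the form $\phi\vee 1_{\Sigma Y}$: such maps carry the $[\iota_{1},\iota_{2}]$-summand of the Hilton-Milnor decomposition into the corresponding $[\iota'_{1},\iota'_{2}]$-summand, where $\iota'_{1}$ denotes the inclusion of $\Sigma X'$.

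The main technical obstacle will be the naturality of the Hilton-Milnor projection onto the single basic product corresponding to $\Sigma X\wedge Y$. For a general wedge map this is not automatic, because a map may mix different Hall basic products; however, for maps of the special form $\phi\vee 1_{\Sigma Y}$ the bigrading by the number of $\Sigma X$-factors and $\Sigma Y$-factors is preserved, and our summand is picked out as the unique one of bidegree $(1,1)$. Once this bookkeeping is verified, the ladder commutes and the Short Five Lemma completes the proof.
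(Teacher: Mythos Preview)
Your approach is correct and matches the paper's own: the paper states only that the lemma ``follows by the naturality of the homomorphisms $H^w$ for appropriate spaces'' without further detail, and you have unpacked precisely this naturality into the Short Five Lemma argument, including the bidegree-$(1,1)$ bookkeeping for the Hilton--Milnor projection. One small point worth tightening: your naturality claim should be stated for \emph{suspension} maps $\phi=\Sigma f$ rather than arbitrary maps between suspensions, since the identity $[\Sigma\iota'_1,\Sigma\iota'_2]\circ(\phi\wedge 1_Y)=(\phi\vee 1_{\Sigma Y})\circ[\Sigma\iota_1,\Sigma\iota_2]$ uses that $\phi\wedge 1_Y$ desuspends; this is harmless here because the maps you actually use, $\Sigma i$ and $\Sigma q$, are suspensions by hypothesis.
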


\begin{lemma}\label{lem:htpgrps-smash}
 $\pi_6(P^3(2^r)\wedge C^5_\eta)=\pi_6(C^4_\eta\wedge C^5_\eta)=\pi_6(C^4_\eta\wedge C^5_r)=0$.
 \begin{proof}
  By \cite[Lemma 4.2]{ZP17}, $\pi_6(P^3(2^r)\wedge C^5_\eta)=\pi_7(P^4(2^r)\wedge C^5_\eta)=0$.
  By the same method used in the proof of \cite[Lemma 4.2]{ZP17}, we have 
  \[sk_8(C^4_\eta\wedge C^5_\eta)\simeq S^7\vee C^7_\eta,\quad sk_8(C^4\wedge C^5_r)\simeq S^7\vee P^3(2^r)\wedge C^5_\eta,\]
  where $sk_8(X)$ denotes the $8$-dimensional skeleton of $X$. Thus 
  \[\pi_6(C^4\wedge C^5_\eta)\cong \pi_6(C^7_\eta)=0,\quad \pi_6(C^4_\eta\wedge C^5_r)\cong \pi_6(P^3(2^r)\wedge C^5_\eta)=0.\]
 \end{proof}
\end{lemma}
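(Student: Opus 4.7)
The plan is to reduce each of the three vanishings to a finite-skeleton calculation, exploiting that by cellular approximation $\pi_6(X)=\pi_6(sk_n(X))$ for any $n\geq 7$. The cell structures I would keep track of are $P^3(2^r)=S^2\cup_{2^r}e^3$, $C^4_\eta=S^2\cup_\eta e^4$, $C^5_\eta=S^3\cup_\eta e^5$, and $C^5_r=P^4(2^r)\cup_{i_3\eta}e^5$, which determine the cells of the smash products and, together with naturality of the stable Hopf class $\eta$, the relevant attaching maps.

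First I would handle $\pi_6(P^3(2^r)\wedge C^5_\eta)=0$ by direct citation of \cite[Lemma 4.2]{ZP17}, which already establishes this via a small-cell skeletal analysis analogous to the ones needed below.

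Next, for $C^4_\eta\wedge C^5_\eta$ the smash has cells in dimensions $5,7,7,9$; both 7-cells attach to the bottom $S^5$ via the stable class $\eta\in\pi_6(S^5)$ (coming from $1_{S^2}\wedge\eta$ and $\eta\wedge 1_{S^3}$ respectively). Since the two attaching classes coincide, an elementary column operation on the attaching matrix splits one 7-cell off freely, giving $sk_8(C^4_\eta\wedge C^5_\eta)\simeq C^7_\eta\vee S^7$. The cofibration $S^6\xra{\eta}S^5\to C^7_\eta\to S^7$, together with $\pi_6(S^7)=0$ and the surjectivity of $\eta_\ast\colon \pi_6(S^6)=\Z\to\pi_6(S^5)=\Z/2$, yields $\pi_6(C^7_\eta)=0$, and the Hilton--Milnor theorem then gives the desired vanishing. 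For $C^4_\eta\wedge C^5_r$, cells occur in dimensions $5,6,7,7,8,9$; after verifying that both 7-cells attach through the common map $i_5\eta\colon S^6\to P^6(2^r)$ (one copy arising from $1_{S^2}\wedge i_3\eta$ via $C^5_r=P^4(2^r)\cup_{i_3\eta}e^5$, the other from $\eta\wedge 1_{S^3}$ followed by the inclusion $S^5\hookrightarrow P^6(2^r)$), the analogous matrix manipulation identifies $sk_8(C^4_\eta\wedge C^5_r)\simeq S^7\vee(P^3(2^r)\wedge C^5_\eta)$, whence $\pi_6=0$ by the first step.

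The main obstacle is the cell-and-attaching-map bookkeeping for the third smash product and the verification of the two wedge splittings of 8-skeleta: one must confirm that the two 7-cells really attach via the same stable class so that the off-diagonal column operation is legitimate, and that the 8-cell's attaching map is compatible with the claimed wedge decomposition rather than coupling the summands through a Whitehead product. This is precisely the kind of low-dimensional smash computation performed in \cite[Lemma 4.2]{ZP17}, whose method I would transport here verbatim.
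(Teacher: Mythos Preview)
Your proposal is correct and follows essentially the same route as the paper: cite \cite[Lemma 4.2]{ZP17} for the first vanishing, then identify the $8$-skeleta of the two remaining smash products as $S^7\vee C^7_\eta$ and $S^7\vee(P^3(2^r)\wedge C^5_\eta)$ respectively, and conclude. The paper compresses the cell-structure analysis into the phrase ``by the same method used in the proof of \cite[Lemma 4.2]{ZP17}'', whereas you spell out the attaching maps of the two $7$-cells and the column operation that splits off $S^7$; your explicit acknowledgement that the $8$-cell's attaching map must be checked for compatibility with the wedge splitting is exactly the bookkeeping hidden in that citation.
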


\begin{lemma}\label{lem:HW}
  The weak second James-Hopf invariant 
  \[H^w(\Sigma X,\Sigma Y)\colon \pi_6(\Sigma X\wedge Y)\to \pi_6(\Sigma X\wedge Y)\]
is an isomorphism for $(X,Y)=(S^2,S^2), (S^2,P^3(2^s))$, $(S^3,P^3(2^s))$, $(S^3,C^4(2^s)) $, $(P^3(2^r),P^3(2^s))$, $(P^3(2^r),C^4_s)$ and $(C^4_r,C^4_s)$.

  \begin{proof}
Denote by $1_n$ the identity of $S^n$.
(1) Observe that  
    \[(\Omega\Sigma \eta_4)\circ E=(\Omega\Sigma 1_2\wedge \eta)\circ E=\Omega_0(\Sigma 1_2\wedge \eta)\] is a generator of $\pi_5(\Omega\Sigma S^2\wedge S^2)$.   Consider the following commutative diagram 
    \[\begin{tikzcd}
      \pi_5(\Omega\Sigma S^2\wedge S^3)\ar[d,"(\Omega \Sigma 1_2\wedge \eta)_\sharp"]\ar[rr,"{H^w(S^3,S^4)}"]&&\pi_5(\Omega\Sigma S^2\wedge S^3)\ar[d,"(\Omega \Sigma 1_2\wedge \eta)_\sharp"]\\
      \pi_5(\Omega\Sigma S^2\wedge S^2)\ar[rr,"{H^w(S^3,S^3)}"]&&\pi_5(\Omega\Sigma S^2\wedge S^3)
    \end{tikzcd}.\]
Then by Lemma \ref{lem:H_2-S3S4} we have 
\begin{align*}
  H^w(S^3,S^3)\big((\Omega\Sigma 1_2\wedge \eta)\circ E\big)&=(\Omega \Sigma 1_2\wedge \eta) H^w(S^3,S^4)(E)\\
  &=(\Omega \Sigma 1_2\wedge \eta)p_{(S^3,S^4)}H_\sharp (\Omega_0[\Sigma \iota_1,\Sigma\iota_2])\\
  &=(\Omega \Sigma 1_2\wedge \eta)p_{(S^3,S^4)} (x\cdot (\iota_1\wedge \iota_2)_E)\\
  &=(\Omega \Sigma 1_2\wedge \eta)(x\cdot E)\\
  &=(\Omega \Sigma 1_2\wedge \eta)\circ E.
\end{align*}
Thus $H^w(S^3,S^3)$ is an isomorphism.

(2) By Lemma \ref{lem:H_2-S3S4}, $H^w(S^4,S^3)$ is surjective. There is a commutative diagram 
\[\begin{tikzcd}
  \pi_6(\Sigma S^3\wedge S^2)\ar[d,"(\Sigma 1_3\wedge i_2)_\sharp",two heads]\ar[rr,"{H^w(S^4,S^3)}",two heads]&& \pi_6(\Sigma S^3\wedge S^2)\ar[d,"(\Sigma 1_3\wedge i_2)_\sharp",two heads]\\
  \pi_6(\Sigma S^3\wedge P^3(2^s))\ar[rr,"{H^w(S^4,P^3(2^s))}"]&&\pi_6(\Sigma S^3\wedge P^3(2^s))
\end{tikzcd},\]
where ``$\twoheadrightarrow $'' denote surjections. It follows that $H^w(S^3,P^3(2^s))$ is an epimorphism of $\z{r}$, which must be an isomorphism. 

(3) There is an isomorphism 
\[(\Sigma 1_2\wedge i_2)_\sharp\colon \pi_6(\Sigma S^2\wedge S^2)\to \pi_6(\Sigma S^2\wedge P^3(2^s)).\]
Then $H^w(S^4,P^4(2^s))$ is an isomorphism by Lemma \ref{lem:HW-nat}.

(4)  Recall from \cite{Baues85} or \cite{lipc22} $\pi_n(C^{n+2}_r)\cong\z{r}$ and $\pi_{n+1}(C^{n+2}_r)=0$ for $n\geq 3$.
Hence we have an isomorphism 
\[(1_4\wedge i_P)_\sharp\colon \pi_6(S^4\wedge P^3(2^s))\to \pi_6(S^4\wedge C^4_s),\] 
which implies that $H^w(S^4,C^5_s)$ is an isomorphism.

(5) There is a short exact sequence 
\begin{multline*}
  0\to \pi_6(\Sigma S^2\wedge P^3(2^s))\xra{(\Sigma i_2\wedge 1_P)_\sharp}\pi_6(\Sigma P^3(2^r)\wedge P^3(2^s))\\\xra{(\Sigma q_3\wedge 1_P)_\sharp}\pi_6(\Sigma S^3\wedge P^3(2^s))\to 0.
\end{multline*}
It follows by former conclusions and Lemma \ref{lem:HW-cof} that $H^w(P^4(2^r),P^4(2^s))$ is an isomorphism. 

(6) We may assume that $r\geq s$. Then the middle homomorphism $(q_4\wedge id)_\sharp$ in the following exact sequence is an isomorphism:
\[\begin{tikzcd}
  \pi_6(S^3\wedge C^4_s)\ar[r,"(i_3\wedge id)_\sharp"]&\pi_6(P^4(2^r)\wedge C^4_s)\ar[r,"(q_4\wedge id)_\sharp"]&\pi_6(S^4\wedge C^4_s)\ar[r,"(2^r 1_4\wedge id)_\sharp"]& \pi_6(S^4\wedge C^4_s)
\end{tikzcd}\] 
It follows that $H^w(P^4(2^r),C^5_s)$ is an isomorphism, by Lemma \ref{lem:HW-nat}.

(7) By Lemma \ref{lem:htpgrps-smash}, the cofibration 
\(S^3\xra{i_32^r}C^5_\eta\xra{i_\eta}C^5_r\xra{q_4}S^4\)
induces the exact sequence 
\[0\to \pi_6(C^5_r\wedge C^4_s)\xra{(q_4\wedge id)_\sharp}\pi_6(S^4\wedge C^4_s)\xra{(2^ri_4\wedge id)_\sharp=0}\pi_6(C^6_\eta\wedge C^4_s).\]
Thus $(q_4\wedge id)_\sharp\colon \pi_6(C^5_r\wedge C^4_s)\to \pi_6(S^4\wedge C^4_s)$ is an isomorphism, and therefore $H^w(C^5_r,C^5_s)$ is an isomorphism, by Lemma \ref{lem:HW-nat}.
  \end{proof}
\end{lemma}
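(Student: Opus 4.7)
The overall plan is to reduce every case to the base case of two spheres via the naturality statements (Lemma \ref{lem:HW-nat} and \ref{lem:HW-cof}). The base case will be proved directly using the explicit information on $H_\sharp$ provided by Lemma \ref{lem:H_2-S3S4}, and then the remaining six pairs will be obtained by transferring along suitable maps $i_2\colon S^2\to P^3(2^s)$, $q_3\colon P^3(2^s)\to S^3$, $i_P\colon P^3(2^s)\to C^4_s$ and $q_4\colon C^4_s\to S^4$.

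First I would handle the spherical case $(X,Y)=(S^2,S^2)$. The key point is that a generator of $\pi_5(\Omega\Sigma S^2\wedge S^2)$ can be written as $(\Omega\Sigma 1_2\wedge \eta)\circ E$, i.e.\ as the image under $(\Omega\Sigma 1_2\wedge \eta)_\sharp$ of the suspension generator $E\in\pi_5(\Omega\Sigma S^2\wedge S^3)$. By the naturality square with the pair $(S^2,S^3)$, the computation of $H^w(S^3,S^3)$ on this generator reduces to pushing the formula of Lemma \ref{lem:H_2-S3S4} through the projection $p_{(S^3,S^3)}$, picking out the coefficient $x$ which is odd, so that the generator is sent to itself (modulo $2$) and therefore to a generator. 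Hence $H^w(S^3,S^3)$ is an isomorphism of $\mathbb{Z}/2$.

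Next I would propagate this to the Moore and Chang complexes in stages. For $(S^2,P^3(2^s))$, the map $(\Sigma 1_3\wedge i_2)_\sharp\colon \pi_6(\Sigma S^3\wedge S^2)\to \pi_6(\Sigma S^3\wedge P^3(2^s))$ is a surjection of $\mathbb{Z}/2$ onto itself, and naturality of $H^w$ shows surjectivity of $H^w(S^4,P^3(2^s))$, hence its bijectivity. For $(S^3,P^3(2^s))$, the map $(\Sigma 1_2\wedge i_2)_\sharp$ is an isomorphism onto $\pi_6(\Sigma S^2\wedge P^3(2^s))$, so Lemma \ref{lem:HW-nat} transfers the result. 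For $(S^3,C^4_s)$, using $\pi_4(C^4_s)\cong \mathbb{Z}/2^s$ and $\pi_5(C^4_s)=0$ from the cited computation in \cite{Baues85,lipc22}, the map $(1_4\wedge i_P)_\sharp\colon \pi_6(S^4\wedge P^3(2^s))\to \pi_6(S^4\wedge C^4_s)$ is an isomorphism, and Lemma \ref{lem:HW-nat} again applies.

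The remaining three mixed cases are then handled by Lemma \ref{lem:HW-cof} applied to the cofibrations $S^2\xra{i_2}P^3(2^r)\xra{q_3}S^3$ and $S^3\xra{i_3 2^r}C^5_\eta\xra{i_\eta}C^5_r\xra{q_4}S^4$ (together with Lemma \ref{lem:htpgrps-smash} to ensure the needed short exact sequences). Specifically, for $(P^3(2^r),P^3(2^s))$ the cofibration of $P^3(2^r)$ yields the short exact sequence and the two outer terms $(S^3,P^3(2^s))$ and $(S^4,P^3(2^s))$ have already been settled; for $(P^3(2^r),C^4_s)$ one may assume $r\geq s$, so that $(q_4\wedge\mathrm{id})_\sharp$ becomes an isomorphism in the long exact sequence and reduces the problem to $(P^3(2^r),S^3)$; and for $(C^4_r,C^4_s)$ the vanishing of $\pi_6(C^6_\eta\wedge C^4_s)$ from Lemma \ref{lem:htpgrps-smash} again collapses the long exact sequence, reducing to $(S^4,C^5_s)$. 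The main obstacle I anticipate is the careful verification that the indicated $\pi_6$ of smash products really are isomorphisms of the types claimed (so that Lemma \ref{lem:HW-nat} or \ref{lem:HW-cof} applies); apart from bookkeeping of signs and of which coefficient in Lemma \ref{lem:H_2-S3S4} one actually extracts, the argument is formal.
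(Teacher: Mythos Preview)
Your proposal follows the same path as the paper's proof: establish the spherical base case via Lemma~\ref{lem:H_2-S3S4} and then propagate through the maps $i_2$, $i_P$, $q_3$, $q_4$ using Lemmas~\ref{lem:HW-nat}, \ref{lem:HW-cof} and \ref{lem:htpgrps-smash}, exactly as the paper does. Note only that you have interchanged the labels of the two sphere--Moore cases (the argument with $\Sigma 1_3\wedge i_2$ belongs to $(S^3,P^3(2^s))$, where the target group is $\mathbb{Z}/2^s$, not $\mathbb{Z}/2$), and that in the case $(P^3(2^r),C^4_s)$ the isomorphism $(q_4\wedge\mathrm{id})_\sharp$ reduces to $(S^3,C^4_s)$ rather than to $(P^3(2^r),S^3)$.
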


\bibliographystyle{amsplain}
\bibliography{refs}

\end{document}